\newtheorem{theorem}{Theorem}[section]
\newtheorem{lemma}[theorem]{Lemma}
\newtheorem{thm}{Theorem}[section]
\newtheorem{cor}[thm]{Corollary}
\newtheorem{rem}[thm]{Remark}
\newtheorem{remark}[thm]{Remark}
\newtheorem{definition}{Definition}[section]
\numberwithin{equation}{section}
\newcommand{\Ga}{\Gamma}
\def\<{\left\langle} \def\>{\right\rangle}
\def\({\left(} \def\){\right)}
\title
{Spatial and temporal dynamics of an  almost 
	periodic reaction-diffusion system for West 
	Nile virus}
\subjclass[2010]{Primary: 35B15; Secondary: 35R35, 35B40, 92D30.}
\keywords{West Nile virus, Almost periodic, Free boundary, 
	Principal Lyapunov exponent, Long-time dynamical behaviour}
\email{chengchengcheng@amss.ac.cn}
\email{zhzheng@amt.ac.cn}
\thanks{The authors are supported by the NSF of China(No. 11671382,12031020), 
CAS Key 
Project of Frontier Sciences(No. QYZDJ-SSW-JSC003), the Key Lab. of Random 
Complex Structures and Data Sciences CAS and National Center for Mathematics 
and Interdisciplinary Sciences CAS}
\thanks{$^*$ Corresponding author: Zuohuan Zheng}
\begin{document}
	\maketitle
	
	\centerline{ Chengcheng Cheng}
	\medskip
	{\footnotesize
		\centerline{Academy of Mathematics and Systems Science, Chinese Academy of Sciences}
		\centerline{Beijing 100190, China}
		\centerline{School of Mathematical Sciences, University of Chinese 
		Academy of Sciences}
		\centerline{Beijing 100049, China}
	} 
	
	\medskip
	
	\centerline{ Zuohuan Zheng$^*$}
	\medskip
	{\footnotesize
		\centerline{Academy of Mathematics and Systems Science, Chinese Academy of Sciences}
		\centerline{Beijing 100190, China}
		\centerline{School of Mathematical Sciences, University of Chinese 
			Academy of Sciences}
		\centerline{Beijing 100049, China}
		\centerline{College of Mathematics and Statistics, Hainan Normal 
		University}
		\centerline{Haikou, Hainan 571158, China}
		
	}
	
	\bigskip
	
	
	\begin{abstract}
	{In current paper, we put forward a reaction-diffusion system for West Nile 
		virus in spatial 
		heterogeneous and time
		almost periodic environment
		with 
		free boundaries to investigate the influences of the 
		habitat differences and seasonal 
		variations on the propagation of West 
		Nile 
		virus. The existence, uniqueness and regularity estimates of the global 
		solution for this disease model are given. 
		Focused on the 
		effects of
		spatial heterogeneity and time almost periodicity, we apply the 
		principal Lyapunov exponent $\lambda(t)$ with time $t$ to get the 
		initial 
		infected domain threshold $L^*$ to
		analyze the long-time 
		dynamical behaviors of the 
		solution for this 
		almost periodic West 
		Nile virus  model and
		give the 
		spreading-vanishing dichotomy regimes of the disease.  Especially, we 
		prove 
		that the solution for this West Nile virus model converges to a time 
		almost 
		periodic function locally uniformly for  $x$ in $\mathbb R$ when the 
		spreading occurs, which is driven by spatial differences and seasonal 
		recurrence.  Moreover, the initial disease 
		infected domain and the front expanding 
		rate have  
		momentous impacts on the permanence and extinction of the epidemic 
		disease. Eventually, numerical simulations identify our theoretical 
		results. }
	\end{abstract}

	\section{Introduction}\label{s1}
	\noindent
West Nile virus (WNv) causes 
mosquito-borne 
epidemic diseases 
seriously 
threatening 
people's 
lives by invading people's nervous system. Since the West Nile virus broke out 
in New York 
in 1999, it has become endemic 
all over the United States. It was estimated that 7 million human infections 
had
occurred from 1999 to 2019, making it the main mosquito-borne virus infectious 
disease in 
America (\cite{plos2019}).  In recent years, the
infection of WNv has spread from 
North America 
to Europe, bringing about considerable deaths. In order to 
supply feasible measures 
to prevent and control 
the propagations of WNv, it is pretty worthwhile to apply 
mathematical models to
investigate the long-time spreading dynamics of the WNv.

West Nile virus spreads mainly through mosquitoes as the vectors and
biting birds as the hosts. Concentrated on the temporal 
transmission of the WNv, there have been many works by ordinary 
differential equations to explore the 
existence and 
stability of the equilibrium, and  introduce the basic reproduction number 
as a 
threshold value to study the transmission dynamics of WNv, 
such as \cite{wonham2004epidemic, cruz2005, bowman2005mathematical, 
	abdelrazec2014, chen2016model} and 
references therein. 

The free migration movements of the infected bird populations and mosquitoes 
populations 
are usually random, so 
the spatial
diffusion term should be in consideration. Therefore, only using ordinary 
differential systems to describe the spatial propagation of the West Nile virus 
is no more suitable.
In view of the spatial 
heterogeneity, Allen et al.~\cite{allen2008asymptotic} studied the 
following SIS reaction-diffusion model in 2008,
\begin{equation}\label{system}
	\left\{\begin{array}{ll}
		\frac{\partial {S}}{\partial t}=d_{S} \Delta {S}-\frac{\beta(x) 
			{S} {I}}{{S}+{I}}+\gamma(x) {I}, & x \in \Omega, \quad 
		t>0, \\
		\frac{\partial {I}}{\partial t}=d_{I} \Delta {I}+\frac{\beta (x)
			{S} {I}}{{S}+{I}}-\gamma (x){I}, & x \in \Omega, 
		\quad t>0,\\ \frac{\partial {S}}{\partial n}=\frac{\partial 
			{I}}{\partial n}=0, & x \in \partial \Omega, \quad t>0,
	\end{array}\right.
\end{equation}
where domain ${\Omega}\in \mathbb R^k(k\geq 1)$ is bounded with smooth boundary 
$\partial 
\Omega$; ${S}(x, t)$ and ${I}(x, t)$ are the 
population 
densities of 
susceptible 
and infected individuals at position $x$ and time $t$; positive constants 
$d_{S}$ and $d_{I}$ 
represent diffusion rates for the susceptible and infected 
populations;  $\beta(x)$ is the disease 
transmission rate at position $x$ and $\gamma(x)$ is the disease recovery rate 
at position $x$, 
both of which are positive 
H$\ddot{\rm o}$lder continuous 
functions, 
respectively. They studied the effects of
the heterogeneous media and the individual movement
of susceptible and infected populations on the permanence and 
eradication of the disease and obtained the global dynamics of model 
(\ref{system}) by basic reproduction number $R_0$.
Lewis et 
al.~\cite{lewis2006traveling} investigated the spreading speed of the WNv 
by a reaction-diffusion system. Maidana and Yang 
~\cite{maidana2009spatial} 
used the traveling wave solution of the WNv model to study the spatial 
spreading 
of the disease across North America. 

The 
infected boundaries driven by birds and mosquitoes migrating from one habitat  
to another change
with respect to time. Thus, applying the fixed studying domain is not 
appropriate.
Free boundaries conditions have largely attracted lots of concentrations 
recently 
and they
are 
frequently used 
in  biological mathematical models, for instance, 
\cite{chen2003free, du2010spreading, 
	xiao2014jde,  wang2014some, 
	guo2019, liu2019dcdsb,zhang2019}. In view of the moving
infected boundaries, Lin 
and Zhu \cite{lin2017spatial} investigated a 
reaction-diffusion system  to explore the spatial spreading of WNv using 
free boundaries to represent the disease spreading fronts. Tarboush et 
al.\cite{tarbou2017} studied a WNv model which incorporates a Partial 
differential equation and an ordinary differential equation with moving 
boundaries. 
Cheng and 
Zheng~\cite{cheng2020} considered a reaction-advection-diffusion WNv 
model with double free boundaries and studied the influence of advection 
term on the boundary asymptotic spreading speeds.


In reality, the outbreak of the disease is not always caused by 
single factor. 
Apart from the 
spatial heterogeneity, the temporal heterogeneity caused by 
alternations of seasonality is also a significant
factor in influencing the propagation of the disease. Peng and Zhao 
\cite{zhao2012sis} investigated the model $(\ref{system})$ in a 
time-periodic 
heterogeneous environment which the transmission rate $\beta(x,t)$ and 
recovery rate $\gamma(x,t)$ are periodic for time $t$.
Zhang and Wang~\cite{wang2016SIR} studied a diffusive 
SIR time periodic system and investigated the spatial dynamics of this epidemic 
model. 
Shan et al.~\cite{zhu2019} investigated a periodic compartmental 
WNv model
with time delay and obtained the effects of seasonal recurrent phenomena on the 
speading and recurrence of the epidemic disease.

From a biological view, the effects of the alternation of seasons on the 
disease
transmission rate, disease recovery rate and the disease death rate are not 
same.  Thus, these parameter periods for the epidemic model are usually 
different. 
Therefore, we had to look for more reasonable mathematical model.
Considering the differences of the periodic coefficients, it is
significant to study the time almost
periodic system. Shen and 
Yi~\cite{shen1998models} studied the convergence of the positive solution 
for almost periodic models of Fisher and Kolmogorov  type. Huang and Shen 
\cite{shen2009kpp} investigated the spreading 
dynamics of KPP models in time almost periodic and space periodic
environment and gave the estimates of the spreading speed. Wang and 
Zhao~\cite{zhao2013basic} discussed 
the basic reproduction ratio $R_0$ and obtained its computating formula for
almost periodic compartmental ordinary differential epidemic models. Wang et 
al.~\cite{wang2015sis} investigated a reaction-diffusion SIS model in a 
time almost periodic environment and discussed the influences of the basic 
reproduction number $R_0$ on the persistence or extinction of the solution for 
epidemic model. Recently, Qiang et al.~\cite{qiang2020} studied a nonlocal 
reaction-diffusion 
model with time delay in almost periodic media and discussed the threshold 
dynamics using the upper Lyapunov exponent.

However, there are few 
studies on mosquito-borne diseases using 
almost periodic system. For the sake of better exploring the mechanisms of 
the disease outbreak and more reasonably describing
the transmission rules of WNv, almost periodic mathematical 
biology models incorporate spatial heterogeneity with time almost periodicity 
should be vitally considered to study the propagation of 
WNv. 
Motivated by the previous studies,  we 
investigate the following WNv 
model with double free boundaries in spatial heterogeneous and time almost 
periodic media,
\begin{equation}\label{equation1}
	\left\{\begin{array}{ll}U_t=D_{1} U_{xx}+\alpha_{1}(x,t) \beta 
		\frac{N_{1}-U}{N_{1}} V-\gamma(x,t) U, & g(t)<x<h(t), t>0, \\ V_t=D_{2} 
		V_{xx}+\alpha_{2}(x,t) \beta \frac{N_{2}-V}{N_{1}} U-d(x,t) V, & 
		g(t)<x<h(t), t>0,\\ U(x, t)=V(x, t)=0, & x=h(t) \text { or } x=g(t), 
		t>0, 
		\\h(0)=h_{0},\quad 
		h^{\prime}(t)=-\mu U_{x}(h(t), t), & t>0, \\ 
		g(0)=-h_{0}, \quad g^{\prime}(t)=-\mu U_{x}(g(t), t), & t>0,\\ U(x, 
		0)=U_{0}(x), V(x, 
		0)=V_{0}(x), & -h_{0} \leq x \leq h_{0},\end{array}\right.
\end{equation}
where $U(x,t)$ and $V(x, t)$ are the densities of infected bird populations
and mosquito populations at location $x$ and time $t$, respectively; $N_1$ and 
$N_2$ 
are 
the total population capacities of the birds and mosquitoes; $D_{1}$ and 
$D_{2}$ 
are the diffusion rates of the birds and mosquitoes, respectively; 
$\alpha_{1}(x,t)$ and 
$\alpha_{2}(x,t)$ are the WNv transmission probabilities per bite to 
birds and 
mosquitoes at location $x$ and time $t$; $\beta$ is the biting rate of 
mosquitoes to birds; $\gamma(x,t)$ is 
the 
recovery rate of birds from infection at location $x$ and time $t$; $d(x,t)$ is 
the death rate of the 
mosquitoes at location $x$ and time $t$. The moving region $(g(t),h(t))$ 
is the infected domain of WNv. Meanwhile, we suppose that the double free 
boundaries 
submit to classical Stefan conditions obeying the Fick's first law, that is,  
$g^{\prime}(t)=-\mu 
U_{x}(g(t), t)$ and  $ 
h^{\prime}(t)=-\mu U_{x}(h(t), t),$ where $\mu$ is positive. Moreover, we 
assume that 
$\alpha_1(x,t)$, $\alpha_2(x,t)$, $\gamma(x, t)$, 
$d(x,t)\in 
C^{2+\alpha_0,1+\frac{\alpha_0}{2}}(\mathbb R\times [0,\infty))$ are positive 
bounded functions for some $\alpha_0\in(0,1)$, and uniformly almost
periodic in $t$. What is more, $\alpha_1(x,t)$, $\alpha_2(x,t)$, $\gamma(x, 
t)$, 
$d(x,t)$ have positive supper and lower bound.

In order to simplify the number of parameters in this model, denote
\begin{equation}\label{1.2}
	\begin{aligned}
		&a_{1}(x,t):=\frac{\alpha_{1}(x,t) \beta}{N_{1}}, 
		a_{2}(x,t):=\frac{\alpha_{2}(x,t) 
			\beta}{N_{1}},d_1(x,t):=\gamma(x,t), d_2(x,t):=d(x,t),
	\end{aligned}
\end{equation}
then $a_1(x,t),a_2(x,t),d_1(x,t),d_2(x,t)\in 
C^{2+\alpha,1+\frac{\alpha}{2}}(\mathbb R\times [0,\infty))$ for any $\alpha\in 
(0,\alpha_0).$

On the basis of the previous simplifications and assumptions, we are going to 
investigate the 
the following simplified WNv system,
\begin{equation}\label{system-1}
	\left\{\begin{array}{ll}{U_{t}=D_{1} U_{x x}+a_{1}(x,t)\left(N_{1}-U\right) 
			V-d_1(x,t) U,} & {g(t)<x<h(t), \enspace t>0,} \\ {V_{t}=D_{2} V_{x 
				x}+a_{2}(x,t)\left(N_{2}-V\right) U-d_2(x,t) V,} & 
		{g(t)<x<h(t), \enspace 
			t>0,} \\ {U(x, t)=V(x, t)=0,} & {x=h(t) \text { or } 
			x=g(t),\enspace t>0,} 
		\\ {h(0)=h_{0},\enspace h^{\prime}(t)=-\mu U_{x}(h(t), t),} & {t>0,} \\ 
		{g(0)=-h_{0}, \enspace g^{\prime}(t)=-\mu U_{x}(g(t), t),} & {t>0,} \\ 
		{U(x, 0)=U_{0}(x), \quad V(x, 0)=V_{0}(x),} & {-h_{0} \leq x \leq 
			h_{0}.}\end{array}\right.
\end{equation}
For the convenience of studying, we make the following assumptions about the 
initial functions 
$U_{0}$ and 
$V_{0}$,
\begin{equation}\label{system-2}
	\left\{\begin{array}{ll}{U_{0}(x) \in C^{2}(\left[-h_{0}, h_{0}\right]),} & 
		{U_{0}(\pm h_{0})=0, \enspace 0<U_{0}(x) \leq N_{1} \text { in 
			}\left(-h_{0}, h_{0}\right),} \\ {V_{0}(x) \in C^{2}(\left[-h_{0}, 
			h_{0}\right]),} & {V_{0}(\pm h_{0})=0,\enspace 0<V_{0}(x) \leq 
			N_{2} \text 
			{ in }\left(-h_{0}, h_{0}\right).}\end{array}\right.
\end{equation}

In this paper, our primary purpose is to research 
a reaction-diffusion WNv model with moving infected regions $(g(t), h(t))$ 
in 
the spatial heterogeneous and time almost 
periodic 
media, and discuss the effects of the spatial heterogeneity and time almost 
periodicity on the spreading and vanishing of the epidemic disease. In view of 
the biological reality, this WNv model (\ref{system-1}) is first 
proposed to incorporate the spatial heterogeneity with time almost periodicity 
in studying epidemic disease. 
We first 
give the 
global 
existence, uniqueness and regularity estimates of the solution, 
the method of which is not trivially similar to 
homogeneous WNv models (See Theorems \ref{existence},  
\ref{lemma-existence1} and \ref{lemma-existence2}).
In view of spatial variants 
with seasonal changes, the virus transmission rate 
($\alpha_1(x,t),\alpha_2(x,t)$) between 
mosquitoes and birds, birds recovery rate ($\gamma(x,t)$) from infection 
and mosquitoes death rate ($d(x,t)$) all depend on 
location $x$ 
and time $t$, which is more 
consistent with the disease spreading reality. Moreover, since the coefficients 
are heterogeneous and the boundary is moving, we introduce the 
principal Lyapunov 
exponent $\lambda(t)$ 
with respect to time $t$ (See section \ref{basic}) to get the initial infected 
domain $L^*$
as a threshold value and we obtain the spreading-vanishing dichotomy regimes of 
West Nile virus (See
Theorem \ref{dichotomy}) using it.  We prove that the eventually infected 
domain is no more than $2L^*$ when the vanishing occurs. Importantly, we prove 
that the 
solution 
for system 
(\ref{system-1}) converges to a time almost periodic function for fixed $x$ in 
bounded subsets of $\mathbb R$ 
when the spreading 
occurs, whose asymptotic behavior is very different from other homogeneous WNv 
models, the 
solution of which 
converges to a positive constant equilibrium, such as 
\cite{wonham2004epidemic}, \cite{lewis2006traveling},
\cite{lin2017spatial}. Our results show that the spatial heterogeneity and 
temporal almost peridocity driven by 
spatial differences and seasonal recurrence lead to the cyclic apperance of 
the cases of infection. Moreover, the initial 
West Nile virus infected domain and the front expanding rate have momentous 
impacts on the permanence and 
extinction of the epidemic disease.  These results are useful for 
people to 
understand the spreading dynamics of the disease with spatial and seasonal 
diversity and implement measures to prevent and control the transmission of the 
infectious diseases.  

The rest of the paper is arranged as follows. In section \ref{prelinminary}, we 
first
prepare some 
preliminaries and assumptions, then present the main results.   In section 
\ref{result1}, 
we provide  a detailed  proof of the  global 
existence, uniqueness and regulaity estimates of the solution for problem 
(\ref{system-1}) in the
time almost periodic and spatial heterogeneous environment which is not 
trivial. In 
section \ref{basic}, considering the spatial heterogeneity and time almost 
periodicity, we introduce the 
principal Lyapunov exponent and obtain some vital properties of this threshold 
value.
In section \ref{result2}, we apply the principal Lyapunov exponent to obtain 
the sufficient conditions for persistence 
and 
eradication 
of the epidemic disease and explore 
the long-time asymptotic behaviors of the solution for heterogeneous system 
(\ref{system-1}) by
applying a different method from other homogeneous and periodic 
WNv models. In section \ref{discussion}, we make several numerical simulations 
to identify our theory results, then present some discussions and 
biological meanings 
about our analytical results for WNv model.

\section{Preliminaries and Main Results}\label{prelinminary}
In the section, we make some  
preparations and display our main results.
\subsection{Preliminaries}
\noindent
\\Firstly, we recall several
definitions about almost periodic function from Section 2.1 of
~\cite{shen1998models} or Section 3 of~\cite{zhao2003}.

\begin{definition}
	(\romannumeral 1)	A function $f\in C(\mathbb{R},\mathbb{R}^k)(k
	\geq1)$ is called  
	an {almost 
		periodic} function
	if for any $\epsilon>0,$ the set 
	\[T(f,\epsilon):=\{\tau\in\mathbb R\mid |f(t+\tau)-f(t)|<\epsilon \text{ 
		for 
		any}\enspace t\in 
	\mathbb R\}\]
	is relatively dense in $\mathbb R$.	
	We say a matrix function $A(x,t)$ is almost periodic if every entry of it 
	is 
	almost periodic.
	\\(\romannumeral2)	A function $f\in C(\mathbb R\times \mathbb R,\mathbb 
	R)$ is  
	{uniformly 
		almost periodic} in $t$ if $f(x,\cdot)$ is almost periodic for every 
	$x\in \mathbb R$, and $f$ is uniformly continuous on $E\times \mathbb 
	R$ for 
	any compact set $E\subset \mathbb R$.
	\\(\romannumeral 3) A function $f(x,t,u,v)\in C(\mathbb R\times \mathbb 
	R\times \mathbb 
	R^m\times \mathbb R^n,\mathbb R^k)(m,n,k\geq1)$ is  {uniformly 
		almost 
		periodic} in 
	$t$ with 
	$x\in \mathbb R$ and $(u,v)$ in bounded sets if $f$ is uniformly 
	continuous for $t\in \mathbb R,x\in\mathbb R$ and $(u,v)$ in bounded sets 
	and  
	$f(x,t,u,v)$ is almost periodic in $t$ for every $x\in \mathbb R,$ 
	$u\in 
	\mathbb R^m$ and $v\in \mathbb R^n$.	
\end{definition} 
\begin{definition}
	(\romannumeral 4) The hull of a uniformly almost periodic matrix $A(x,t)$ 
	is defined 
	by 
	\[\textit{H}(A)=\{B(\cdot,\cdot)\mid\exists\enspace t_n\rightarrow\infty, 
	{\rm such 
		\enspace that} \enspace
	B(x,t+t_n)\rightarrow A(x,t) \enspace {\rm uniformly}\enspace  {\rm 
		for}\enspace  
	t\in\mathbb R,\]
	\[ x\enspace {\rm in\enspace bounded \enspace sets}\}.\]
	(\romannumeral 5) The hull of a uniformly almost periodic matrix 
	$F(x,t,u,v)$ is 
	defined by 
	\[\textit{H}(F)=\{G(\cdot,\cdot,\cdot,\cdot)\mid\exists\enspace 
	t_n\rightarrow\infty,{\rm  such 
		\enspace that }\enspace
	G(x,t+t_n,u,v)\rightarrow F(x,t,u,v)\] \[{\rm uniformly} \enspace {\rm 
		for}\enspace  
	t\in \mathbb R, (x,u,v)
	\enspace {\rm in\enspace bounded \enspace sets}\}.\]
\end{definition}

In general, the system $(\ref{system-1})$ can be seen as the special form of 
the 
following system,
\begin{equation}\label{system-3}
	\left\{\begin{array}{ll}{u_{t}=D_{1} u_{x x}+f_1(x,t,u,v),} & {g(t)<x<h(t), 
			\enspace t>0,} \\ {v_{t}=D_{2} v_{x 
				x}+f_2(x,t,u,v),} & {g(t)<x<h(t), \enspace 
			t>0,} \\ {u(x, t)=v(x, t)=0,} & {x=h(t) \text { or } 
			x=g(t),\enspace 
			t>0,} 
		\\ {h(0)=h_{0},\enspace h^{\prime}(t)=-\mu u_{x}(h(t), t),} & {t>0,} \\ 
		{g(0)=-h_{0}, \enspace g^{\prime}(t)=-\mu u_{x}(g(t), t),} & {t>0,} \\ 
		{u(x, 0)=u_{0}(x), \quad v(x, 0)=v_{0}(x),} & {-h_{0} \leq x \leq 
			h_{0},}\end{array}\right.
\end{equation}
where initial data $(u_0,v_0)$ satisfy (\ref{system-2}),
and $f_i(x,t,u,v)$ satisfies the following 
conditions for $i=1,2$.
\\
\textit{$(\textbf{H1})$\label{h1} $f_i(x,t,u,v)\in C^1(\mathbb 
	R^4)$,  
	$Df_i(x,t,u,v)=(\frac{\partial f_i}{\partial x}, \frac{\partial 
		f_i}{\partial t},\frac{\partial f_i}{\partial u},\frac{\partial 
		f_i}{\partial v})$ is bounded for $(x,t)\in 
	\mathbb R \times \mathbb R$ and  $(u,v)$ in 
	bounded sets. \\
	$(\textbf{H2})$ \label{h2}There 
	exist positive constants
	$M$ and $N$ such 
	that \[\sup_{t\in\mathbb R,x\in\mathbb R,\atop u\geq M,v\in \mathbb 
		R}f_1(x,t,u,v)<0, \sup_{t\in\mathbb R,x\in\mathbb R,\atop u\in \mathbb 
		R,v\geq 
		N}f_2(x,t,u,v)<0,\]
	\[\sup_{t\in\mathbb R,x\in\mathbb R, \atop u\geq 0,v\geq0}\frac{\partial 
		{f_1}}{\partial u}(x,t,u,v)<0,
	\sup_{t\in\mathbb R,x\in\mathbb R, \atop u\geq 0,v\geq0}\frac{\partial 
		{f_2}}{\partial v}(x,t,u,v)<0.\]
	$(\textbf{H3})$\label{h3} $f_i$ and $Df_i$ are uniformly almost periodic in 
	$t\in \mathbb 
	R$ with $x\in\mathbb R$ and
	$(u,v)$ in bounded sets.\\
	$(\textbf{H4})$ \label{h4}Let 
	\begin{equation}
		F(x,t,u,v)=\left(
		\begin{array}{ccc}
			f_1(x,t,u,v)\\ f_2(x,t,u,v)
		\end{array}
		\right).
	\end{equation}
	For any given sequences $\{x_n\}\subset \mathbb R$ and 
	$\{G_n\}\subset \textit{H}(F)$, there exist subsequences $\{ 
	x_{n_k}\}\subset 
	\{x_n\}$
	and $\{ G_{n_k}\}\subset \{G_n\} $ such that 
	$\lim\limits_{k\rightarrow\infty} G_{n_k}(x+  x_{n_k},t,u,v)$ exists
	for $t\in \mathbb 
	R$  uniformly and $(x,u,v)$ in bounded sets.}

In this paper, we take 
\begin{equation}\label{fi}
	\begin{array}{c}
		f_1(x,t,U,V)=a_1(x,t)(N_1-U)V-d_1(x,t)U,\\
		f_2(x,t,U,V)=a_2(x,t)(N_2-V)U-d_2(x,t)V.
	\end{array}
\end{equation}	
Define matrix function $A(x,t)$ by
\begin{equation}\label{A(x,t)}
	A(x,t):=\left(
	\begin{array}{ccc}
		\frac{\partial 
			f_1(x,t,U,0)}{\partial U}&\frac{\partial f_1(x,t,0,V)}{\partial 
			V}\\\frac{\partial 
			f_2(x,t,U,0)}{\partial U}& 
		\frac{\partial f_2(x,t,0,V)}{\partial V}
	\end{array}
	\right)=\left(\begin{array}{ccc}
		-d_1(x,t)&a_1(x,t) N_1\\a_2(x,t)N_2& 
		-d_2(x,t)
	\end{array}\right).
\end{equation}

Moreover, we assume that $A(x,t)$ satisfies\\
\textit{$(\textbf{H5})$ There exists some $L^*>0$ such that 
	$\inf\limits_{\tilde x\in\mathbb R,L\geq L^*}\lambda(A(\cdot+\tilde 
	x,\cdot),L)>0$.}\\
Where $\lambda(A,L)$ is the principal Lyapunov exponent and $L^*$ is a constant 
dependent
on $a_i(x,t), N_i, d_i(x,t)$ for $i=1,2$, which will be 
explicitly explained in section \ref{basic}.
\subsection{Main results}
\noindent
\\Next we will present our main results for problem (\ref{system-1}). In 
Section \ref{result1}, we will prove that $h^\prime(t)>0$ 
and $g^\prime(t)<0$ in $t\in (0, +\infty)$.	
Therefore, we denote $g_\infty:=\lim\limits_{t\rightarrow\infty}g(t)$, and 
$h_\infty:=\lim\limits_{t\rightarrow\infty}h(t).$ 
Further, we can obtain that $g_\infty\in[-\infty,0)$ and 
$h_\infty\in(0,\infty].$

\begin{theorem}[Existence and uniqueness]\label{existence}
	Assuming any given initial functions 
	$(U_{0},V_{0})$ satisfy (\ref{system-2}). For any 
	$\alpha\in(0,\alpha_0)$, there exists a time $T$ such that the system 
	(\ref{system-1}) admits  a 
	unique global solution $(U,V;g,h)\in 
	(C^{2+\alpha,1+\frac{\alpha}{2}}([g(t),h(t)]\times (0,T]))^2\times 
	(C^{1+\alpha/2}((0,T]))^{2}$, where $T$ is dependent on $\alpha, h_0, 
	||U_0||_{C^2([-h_0,h_0])}$ and $||V_0||_{C^2([-h_0,h_0])}$.
\end{theorem}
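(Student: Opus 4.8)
The plan is to reduce the free-boundary problem to a problem on a \emph{fixed} domain and then run a contraction-mapping argument, following the now-standard strategy for Stefan-type reaction--diffusion systems (as in \cite{du2010spreading, lin2017spatial}), while carefully tracking the $(x,t)$-dependence of the coefficients. First I would \emph{straighten the free boundaries}. Since both $g(t)$ and $h(t)$ are unknown, I would introduce a change of variables $x\mapsto y$ carrying the moving interval $[g(t),h(t)]$ onto the fixed interval $[-h_0,h_0]$. To keep the two boundaries decoupled it is convenient to build the diffeomorphism from a fixed cutoff $\zeta\in C^\infty(\mathbb{R})$ with $\zeta(s)=1$ for $|s|$ small and $\zeta(s)=0$ for $|s|\ge h_0/2$, of the schematic form
\begin{equation*}
x = y + \zeta(y-h_0)\,(h(t)-h_0) + \zeta(y+h_0)\,(g(t)+h_0),
\end{equation*}
which is the identity in the middle of the domain and shifts by the boundary displacement near each endpoint (and whose Jacobian equals one at $y=\pm h_0$ since $\zeta$ is flat near its peak). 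Writing $w(y,t)=U(x,t)$, $z(y,t)=V(x,t)$, the system (\ref{system-1}) is transformed into a uniformly parabolic system for $(w,z)$ on $[-h_0,h_0]\times(0,T]$, whose coefficients now depend on $g,h,g',h'$ and on the transformed heterogeneous data $a_i(x(y,t),t)$, $d_i(x(y,t),t)$; the Stefan conditions become the ODEs $h'(t)=-\mu\,w_y(h_0,t)$ and $g'(t)=-\mu\,w_y(-h_0,t)$.

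Next I would set up the \emph{fixed-point map}. For a small $T>0$ let $\mathcal{D}_T$ be the complete metric space of boundary pairs $(g,h)\in(C^{1+\alpha/2}([0,T]))^2$ satisfying $g(0)=-h_0$, $h(0)=h_0$, the compatible slopes at $t=0$, and an a priori Hölder bound. For each fixed $(g,h)\in\mathcal{D}_T$ the transformed problem for $(w,z)$ is parabolic with Hölder coefficients on a fixed domain; standard $L^p$ theory followed by Schauder estimates yields a unique $(w,z)\in(C^{2+\alpha,1+\alpha/2})^2$ together with the boundary bound $\|w_y(\pm h_0,\cdot)\|_{C^{\alpha/2}}\le C$. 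I would then define $(\tilde g,\tilde h)$ by integrating the Stefan ODEs with this $(w,z)$, producing a map $\mathcal{F}\colon(g,h)\mapsto(\tilde g,\tilde h)$, whose fixed point is exactly a solution of (\ref{system-1}). To control the nonlinearity $a_i(N_i-\,\cdot\,)\,\cdot$, which is only locally Lipschitz, I would first establish via the maximum principle the invariant bounds $0\le U\le N_1$ and $0\le V\le N_2$, so that the reaction terms and their derivatives stay bounded on the relevant range throughout the argument.

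Then I would prove $\mathcal{F}$ is a \emph{contraction} for $T$ small. Since $\tilde h'(t)-h'(0)=-\mu\big(w_y(h_0,t)-w_y(h_0,0)\big)$ is $O(T^{\alpha/2})$ by the boundary Hölder estimate, $\mathcal{F}$ maps $\mathcal{D}_T$ into itself once $T$ is small; and for two pairs $(g_1,h_1),(g_2,h_2)\in\mathcal{D}_T$ the boundary positions differ by $O(T)$ times their $C^{1+\alpha/2}$-distance, hence so do the transformed coefficients, and a second Schauder/$L^p$ estimate applied to the difference of the corresponding solutions yields $\|\mathcal{F}(g_1,h_1)-\mathcal{F}(g_2,h_2)\|\le\tfrac12\|(g_1,h_1)-(g_2,h_2)\|$ for $T$ sufficiently small. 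The Banach fixed-point theorem then gives a unique local solution, and a final Schauder bootstrap promotes it to the regularity class $(C^{2+\alpha,1+\alpha/2})^2\times(C^{1+\alpha/2})^2$ claimed, with $T$ depending only on $\alpha,h_0,\|U_0\|_{C^2([-h_0,h_0])},\|V_0\|_{C^2([-h_0,h_0])}$.

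The main obstacle I expect is not the abstract scheme but the \emph{interaction of the heterogeneity with the moving-boundary change of variables}: after straightening, the coefficients become $a_i(x(y,t),t)$ and $d_i(x(y,t),t)$, and one must verify that these retain the $C^{\alpha,\alpha/2}$ regularity (and the uniform upper and lower bounds) demanded by the Schauder estimates, and that the resulting constants are independent of $(g,h)\in\mathcal{D}_T$ over the short time interval. This is precisely where the argument departs from the homogeneous case analysed in \cite{lin2017spatial}: the spatial--temporal dependence of $a_i,d_i$ must be propagated carefully through the Jacobian factors and through the contraction estimate, whereas in the constant-coefficient models these terms are inert.
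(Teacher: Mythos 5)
Your proposal is correct and follows the same overall skeleton as the paper (straighten the free boundaries, set up a fixed-point problem on a short time interval, then bootstrap to $C^{2+\alpha,1+\alpha/2}$ regularity via Schauder theory), but it diverges from the paper's route at two points worth noting. First, the paper uses the global affine change of variables $y=\tfrac{2x}{h(t)-g(t)}-\tfrac{h(t)+g(t)}{h(t)-g(t)}$ onto $[-1,1]$, whereas you use a Du--Lin-style cutoff diffeomorphism that is the identity in the interior; both work, and yours has the minor advantage of decoupling the two endpoints. Second, and more substantively, the paper's operator $\mathcal{F}$ acts on the quadruple $(m,n,g,h)$ and existence is obtained from the \emph{Schauder} fixed point theorem (compactness plus mapping a convex set into itself), after which uniqueness is proved \emph{separately} by subtracting two putative solutions, writing the linear system (\ref{system-6}) for the difference with source terms like $D_1(A_1-A_2)m_{2,yy}$, and closing via $L^p$ theory, Sobolev embedding, and the observation that $\|h-h(0)\|_{C^1}\le 2\|h'-h'(0)\|_{C^{\alpha/2}}T^{\alpha/2}$ forces $h\equiv g\equiv 0$ for small $T$. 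You instead run a \emph{Banach} contraction on the boundary pair alone, which delivers existence and uniqueness simultaneously; the price is that the contraction estimate is essentially the same difference computation the paper performs for uniqueness, so nothing is saved in labour, only in exposition. Two technical points you should make explicit to match the paper's care: (i) the Hölder seminorm bounds $[m_y]_{C^{\alpha,\alpha/2}}$ used to extract the $T^{\alpha/2}$ smallness must have constants \emph{independent of $T^{-1}$} as $T\to 0$ (the paper invokes a version of the interpolation inequality that avoids extending $m,n$ to a larger domain precisely for this reason); and (ii) your appeal to the maximum principle for the invariant bounds $0\le U\le N_1$, $0\le V\le N_2$ should be phrased for the solution of the fixed-$(g,h)$ auxiliary problem (or handled by restricting to the balls $\mathcal{D}_1,\mathcal{D}_2$ around the initial data, as the paper does), since the bounds cannot be assumed for the solution of (\ref{system-1}) before it is constructed. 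Neither point is a conceptual gap.
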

\begin{remark}
	Actually, the solution for system (\ref{system-1}) uniquely exists for all 
	$t\in(0,\infty)$ (See Theorem \ref{lemma-existence2}).
\end{remark}

In order to investigate the asymptotic dynamics of system (\ref{system-1}), we 
first introduce the following system,
\begin{equation}\label{system-10}
	\left\{\begin{array}{ll}{U_{t}=D_{1} U_{x x}+a_1(x,t)(N_1-U)V-d_1(x,t)U,} & 
		{-\infty<x<\infty, 
			\enspace t>0,} \\ {V_{t}=D_{2} V_{x 
				x}+a_2(x,t)(N_2-V)U-d_2(x,t)V,} & {-\infty<x<\infty, 
			\enspace 
			t>0.} 
	\end{array}\right.
\end{equation}
Then we get
\begin{theorem}[Spreading-vanishing dichotomy]\label{dichotomy}
	Supposing $(\textbf{H1})-(\textbf{H5})$ hold and the initial 
	functions 
	$(U_0,V_0)$ 
	satisfy $(\ref{system-2})$. Let $(U,V;U_0,V_0,h_0)$ be the solution of 
	$(\ref{system-1})$, for such $L^*$ in (\textbf{H5}), the following 
	spreading-vanishing dichotomy regimes 
	hold:
	\\Either\\(1) {\rm Vanishing}: $h_\infty-g_\infty\leq2L^*$ and 
	$$\lim\limits_{t\rightarrow 
		+\infty}U(x,t;U_0,V_0,h_0)=0, 
	\lim\limits_{t\rightarrow 
		+\infty}V(x,t;U_0,V_0,h_0)=0$$ uniformly in $x\in [g_\infty, h_\infty];$
	\\or\\(2) {\rm Spreading}: $h_\infty-g_\infty=\infty$ and 
	$$\lim\limits_{t\rightarrow 
		+\infty}U(x,t;U_0,V_0,h_0)-U^*(x,t)=0,
	\lim\limits_{t\rightarrow +\infty}V(x,t;U_0,V_0,h_0)-V^*(x,t)=0$$ locally 
	uniformly 
	for $x$ in 
	$\mathbb{R}$, where $(U^*(x,t),V^*(x,t))$ is the unique positive time 
	almost 
	periodic 
	solution of 
	(\ref{system-10}).
\end{theorem}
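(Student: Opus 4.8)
The plan is to establish the spreading-vanishing dichotomy by first showing that exactly one of the two alternatives must occur, and then deriving the detailed asymptotics in each case. The natural starting point is the monotonicity of the free boundaries, already asserted from Section \ref{result1}: since $h'(t)>0$ and $g'(t)<0$, the limits $g_\infty\in[-\infty,0)$ and $h_\infty\in(0,\infty]$ exist, so the infected region either stays bounded ($h_\infty-g_\infty<\infty$) or expands to fill $\mathbb{R}$. The key dichotomy will hinge on the threshold $L^*$ coming from \textbf{(H5)} and the principal Lyapunov exponent $\lambda(A(\cdot+\tilde x,\cdot),L)$. I would first prove the \emph{vanishing} implication in the contrapositive form: if $h_\infty-g_\infty>2L^*$, then spreading must occur. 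The idea is that once the interval $(g(t),h(t))$ contains a translate of an interval of length exceeding $2L^*$ for all large $t$, the sign condition $\lambda>0$ forces the solution to be bounded below away from zero on a subinterval, which in turn keeps $U_x$ at the boundaries strictly negative, so $h'(t)$ and $-g'(t)$ stay bounded below, pushing $h_\infty-g_\infty=\infty$. This is the standard Stefan-problem mechanism adapted to the heterogeneous, almost periodic coefficients.

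For the vanishing regime itself, assuming $h_\infty-g_\infty\le 2L^*$, I would construct a supersolution on the fixed bounded domain $(g_\infty,h_\infty)$ whose associated linearized operator has negative principal Lyapunov exponent (guaranteed because the domain length is at most $2L^*$, below threshold). Comparison then forces $U,V\to 0$ uniformly on $[g_\infty,h_\infty]$. Concretely, I would linearize the system at $(0,0)$, whose cooperative linear part is governed precisely by the matrix $A(x,t)$ in \eqref{A(x,t)}, and invoke the principal Lyapunov exponent theory from Section \ref{basic}: a negative exponent yields exponential decay of the linear flow, and since the nonlinearities $f_1,f_2$ in \eqref{fi} are dominated by their linearizations near the origin (by \textbf{(H1)}, \textbf{(H2)}), a comparison argument upgrades this to decay of the full nonlinear solution. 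The cooperative structure of the WNv system (off-diagonal entries of $A$ positive) is what makes the comparison principle applicable here.

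The spreading regime is the substantive part and the main obstacle. Here one must show that when $h_\infty-g_\infty=\infty$, the solution of the free-boundary problem \eqref{system-1} converges, locally uniformly in $x$, to the unique positive almost periodic solution $(U^*,V^*)$ of the whole-line problem \eqref{system-10}. The plan is: first establish existence and uniqueness of $(U^*,V^*)$ as a time almost periodic entire solution of \eqref{system-10}, using \textbf{(H1)}--\textbf{(H4)}, the hull-compactness condition \textbf{(H4)}, and monotone/skew-product semiflow techniques for almost periodic systems (as in \cite{shen1998models,shen2009kpp}); uniqueness and global attractivity among positive solutions should follow from the strict concavity/sublinearity encoded in \textbf{(H2)} together with the sign condition \textbf{(H5)}. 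Second, trap the free-boundary solution between the trivial subsolution and the whole-line solution via comparison, then pass to the limit along the expanding domain. The technical heart — and the hardest step — is the \emph{locally uniform} convergence: because the coefficients are only almost periodic in $t$ (not periodic), one cannot use a single period map, and must instead work in the hull $H(F)$ and employ the principal Lyapunov exponent together with a squeezing argument to control the solution on every compact $x$-set as $t\to\infty$. I expect the delicate part to be showing that the $\omega$-limit set of the solution in the skew-product flow reduces to the single almost periodic orbit $(U^*,V^*)$, which is where the full strength of \textbf{(H4)} and the positivity of $\lambda$ from \textbf{(H5)} must be combined.
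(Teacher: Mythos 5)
Your architecture is the paper's: split on whether $h_\infty-g_\infty$ is finite, handle vanishing by a principal-Lyapunov-exponent comparison on the limiting interval $(g_\infty,h_\infty)$, and handle spreading by squeezing the free-boundary solution between the fixed-domain almost periodic states $U_L,V_L$ of Lemma \ref{propA} and the whole-line solution, with almost periodicity of $(U^*,V^*)$ extracted via the hull and Fink's double-sequence criterion. The spreading half of your plan tracks Theorem \ref{tm6} closely and is sound; your contrapositive formulation of the first step (length $>2L^*$ forces spreading) is the same mechanism as the paper's Step 1 of Theorem \ref{tm5}.

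The gap is in the vanishing regime, at the borderline length. You assert that for $h_\infty-g_\infty\le 2L^*$ the linearized operator on $(g_\infty,h_\infty)$ has \emph{negative} principal Lyapunov exponent ``because the domain length is at most $2L^*$, below threshold''. When $h_\infty-g_\infty=2L^*$ the half-length is exactly $L^*$, and (\textbf{H5}) gives $\inf_{\tilde x,\,L\ge L^*}\lambda(A(\cdot+\tilde x,\cdot),L)>0$, i.e.\ the exponent is \emph{positive} there; your supersolution-plus-linear-decay argument therefore cannot close this case (and even for lengths strictly below $2L^*$, (\textbf{H5}) does not by itself force negativity --- only monotonicity in $L$ from Lemma \ref{mono} is available). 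The paper isolates the case $h_\infty-g_\infty=2L^*$ and argues differently: if the solution did not decay, one extracts along a time sequence a nontrivial nonnegative entire solution of the limiting problem on $(g_\infty,h_\infty)$ with coefficients in the hull, applies the Hopf lemma to get $U_x(h_\infty,t)<0$ and $U_x(g_\infty,t)>0$, hence $h^{\prime}(\breve s_n)=-\mu U_x(h(\breve s_n),\breve s_n)$ bounded away from zero, contradicting Theorem \ref{hg} ($h^{\prime}(t)\to 0$ whenever $h_\infty-g_\infty<\infty$). You need this, or an equivalent, additional argument; note that the same Hopf-lemma-versus-$h^{\prime}\to 0$ mechanism is also what makes your Step-1 contrapositive rigorous, so it is worth stating explicitly rather than deferring to ``the standard Stefan-problem mechanism''.
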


\begin{theorem}[Spreading-vanishing threshold]\label{threshold}
	Supposing $(\textbf{H1})-(\textbf{H5})$ hold. For any given $h(0), 
	g(0)$ and the initial 
	functions 
	$(U_0,V_0)$ 
	satisfying $(\ref{system-2})$. Let $(U,V;U_0,V_0,g,h)$ be the solution of 
	$(\ref{system-1})$, for such $L^*$ in (\textbf{H5}), the followings hold. 
	\\(1) If $\lambda(0)>0,$ then $h(0)-g(0)\geq 2L^*$, 
	further,
	$h_\infty-g_\infty=\infty$, thus, the 
	spreading
	occurs;
	\\(2) If $\lambda(0)<0$, then there exists a constant
	$\mu^*\geq0$ such that 
	the 
	spreading 
	occurs when $\mu>\mu^*$ and vanishing occurs when $0<\mu\leq \mu^*$.	
\end{theorem}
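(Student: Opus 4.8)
The plan is to prove the spreading-vanishing threshold (Theorem \ref{threshold}) by relating the principal Lyapunov exponent $\lambda(0)$ to the geometry of the initial infected interval via hypothesis $(\textbf{H5})$, and then invoking the dichotomy of Theorem \ref{dichotomy}. For part (1), I would argue contrapositively: the sign of $\lambda(0)$ should be monotone in the length of the domain. Recall that $\lambda(0)$ is the principal Lyapunov exponent of $A(x,t)$ associated to the linearized operator on the interval $(g(0),h(0))$ with Dirichlet boundary conditions. The key analytic input, to be established in Section \ref{basic}, is that $\lambda$ increases as the interval widens (a larger domain raises the principal eigenvalue of the diffusion operator toward less negative values) together with the threshold characterization in $(\textbf{H5})$: when the half-length exceeds $L^*$, the exponent is strictly positive. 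So if $\lambda(0)>0$, monotonicity forces $h(0)-g(0)\geq 2L^*$.

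First I would make precise the monotonicity of $\lambda$ with respect to the domain length, using the variational or eigenfunction-comparison characterization of the principal Lyapunov exponent for the cooperative system with matrix $A(x,t)$ from \eqref{A(x,t)}; this is where the cooperative/irreducible structure of $A$ (off-diagonal entries $a_1(x,t)N_1$ and $a_2(x,t)N_2$ positive) is used to guarantee that a principal exponent exists and depends monotonically on the interval. Once $h(0)-g(0)\geq 2L^*$ is secured, I would show the boundaries keep expanding so that $h_\infty-g_\infty\geq 2L^*$; combined with the dichotomy of Theorem \ref{dichotomy}, the vanishing alternative (which requires $h_\infty-g_\infty\leq 2L^*$) can only hold with equality, and I would rule out the borderline case by a strict-monotonicity argument for the moving boundaries (since $h'(t)>0$, $g'(t)<0$ strictly for $t>0$), forcing $h_\infty-g_\infty=\infty$, i.e.\ spreading.

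For part (2), assume $\lambda(0)<0$. The strategy is the standard continuity-and-monotonicity argument in the expansion parameter $\mu$. I would first show that for small $\mu$ vanishing occurs: when $\mu$ is small the free boundaries move slowly, the infected domain stays short (its eventual half-length controlled by $L^*$ through $(\textbf{H5})$ and the negativity of $\lambda(0)$), and a suitable comparison supersolution forces $(U,V)\to(0,0)$. Next I would show that for large $\mu$ spreading occurs: a large expansion rate drives the boundaries outward fast enough that the domain eventually exceeds length $2L^*$, where $(\textbf{H5})$ makes the principal exponent positive and the disease persists and spreads. The monotone dependence of the solution and of $h_\infty-g_\infty$ on $\mu$ (via the comparison principle for the free-boundary problem) then yields a well-defined sharp threshold $\mu^*=\inf\{\mu>0: \text{spreading occurs}\}\in[0,\infty)$, with spreading for $\mu>\mu^*$ and vanishing for $0<\mu\leq\mu^*$.

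The main obstacle I anticipate is establishing the strict monotone dependence of the spreading/vanishing outcome on $\mu$ rigorously in the almost periodic, spatially heterogeneous setting: the comparison principle must be applied to the full free-boundary system \eqref{system-1}, and I must verify that increasing $\mu$ strictly enlarges the infected region $(g(t),h(t))$ for all later times, so that the outcome cannot oscillate in $\mu$. This requires a careful monotonicity lemma for the Stefan-type boundary conditions $h'(t)=-\mu U_x(h(t),t)$, $g'(t)=-\mu U_x(g(t),t)$, together with continuous dependence of $\lambda$ on the domain, which replaces the constant-coefficient eigenvalue computations available in the homogeneous WNv models. Closing the borderline case $\mu=\mu^*$ (showing vanishing rather than spreading there) will likewise demand a delicate limiting argument using the almost periodicity and the properties of $\lambda(t)$ developed in Section \ref{basic}.
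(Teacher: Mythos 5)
Your proposal follows essentially the same route as the paper: part (1) via the monotonicity of $\lambda$ in the domain length together with the strict expansion of the free boundaries and the dichotomy of Theorem \ref{dichotomy}, and part (2) via the monotone dependence of $h_\mu,g_\mu$ on $\mu$ (the paper's Lemma \ref{hmu(t)}) plus continuous dependence on $\mu$ to settle the borderline case, defining $\mu^*$ as the threshold of a monotone family (the paper takes $\mu^*=\sup\{\mu: h_\mu(\infty)-g_\mu(\infty)<\infty\}$, which by monotonicity coincides with your infimum of the spreading set). The extra steps you propose (vanishing for small $\mu$, spreading for large $\mu$) are not needed for the statement as given, since $\mu^*=0$ is permitted; the paper simply treats the case where the vanishing set is empty separately.
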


\begin{remark}
	{\rm The explicit explanation for principal Lyapunov exponent $\lambda(t)$ 
		can refer to section 
		\ref{basic}. The above theorem gives the sufficient conditions about 
		the 
		spreading 
		and 
		vanishing of the disease.	The threshold constant $L^*$ determines the 
		uniform persistence or extinction of WNv 
		by influencing the sign of the principal Lyapunov exponent (See section 
		\ref{result2}). }
\end{remark}

\begin{remark}
	In his paper, we always suppose 
	that 
	(\textbf{\textit{H5}}) holds. Accordng to Theorem \ref{threshold} and Lemma 
	\ref{propA}, considering the meaning 
	of biology, we can explain this 
	assumption in the sence that the living habitat at remote distance is in 
	high-risk of infection by the disease.  
\end{remark}

\section{Existence and Uniqueness}\label{result1}
\noindent
In this section, we will show the existence and uniqueness of the 
global solution for system 
(\ref{system-1}). Since the system
(\ref{system-1}) can be regarded as a special case of the system 
(\ref{system-3}). We only need to give a explicit proof for system 
(\ref{system-3}).
Although 
there are similar results about 
the solution for epidemic models with constant 
coefficients, the proofs of the (\ref{system-3}) in heterogeneous 
environment can not easily obtained by analogy. Therefore, we provide a 
detailed 
proof according to 
Theorem 1.1 (\cite{wang2019existence}).
\begin{theorem}\label{lemma-existence1}
	Assume that $(\textbf{H1})-(\textbf{H4})$ hold. For any $\alpha\in 
	(0,\alpha_0)$ and any given 
	$(u_{0},v_{0})$ 
	satisfying (\ref{system-2}), there exists $T>0$ 
	such that the system (\ref{system-3}) admits  a 
	unique solution $(u,v,g,h)\in 
	({C^{1+\alpha,(1+\alpha)/2}}({D_{T}}))^{2}\times 
	(C^{1+\alpha/2}([0,T]))^{2}$,
	where
	$D_{T}=\{(x,t)\in \mathbb{R}^{2}\mid x\in[g(t),h(t)],t\in[0,T]\}$,
	and	$T$ is only dependent on 
	$\alpha, h_0, ||u_0||_{C^2([-h_0,h_0])}$ and $||v_0||_{C^2([-h_0,h_0])}$.
\end{theorem}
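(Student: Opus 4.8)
The plan is to follow the standard ``straighten the free boundary plus contraction mapping'' strategy (as in \cite{du2010spreading, wang2019existence}), adapted to the coupled system and to the heterogeneous, time-dependent nonlinearities $f_i$. First I would remove the moving boundary by the change of variables
\begin{equation*}
y=h_0\,\frac{2x-g(t)-h(t)}{h(t)-g(t)},\qquad w(y,t)=u(x,t),\quad z(y,t)=v(x,t),
\end{equation*}
which carries $x\in[g(t),h(t)]$ onto the fixed interval $y\in[-h_0,h_0]$. A direct computation turns $(\ref{system-3})$ into a uniformly parabolic system for $(w,z)$ on the fixed cylinder $[-h_0,h_0]\times[0,T]$ (uniform parabolicity holding because $h-g$ stays close to $2h_0$ for small $T$), with coefficients built from $h-g$, $g'$, $h'$ and explicit rational functions of $y$; the homogeneous Dirichlet conditions $w(\pm h_0,t)=z(\pm h_0,t)=0$ are preserved, and the Stefan conditions become
\begin{equation*}
h'(t)=-\frac{2\mu h_0}{h(t)-g(t)}\,w_y(h_0,t),\qquad g'(t)=-\frac{2\mu h_0}{h(t)-g(t)}\,w_y(-h_0,t).
\end{equation*}
Because the nonlinear terms enter only through the lower-order source $f_i(x(y,t),t,w,z)$, hypothesis $(\textbf{H1})$ makes them $C^1$ with derivatives bounded on the relevant bounded sets, which is all the subsequent estimates require.

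Next I would set up a fixed point argument on the boundary functions. Fix a closed, bounded, convex admissible set
\begin{equation*}
\mathcal{G}_T=\Big\{(g,h)\in (C^{1}([0,T]))^2:\ g(0)=-h_0,\ h(0)=h_0,\ g'(0)=-\mu u_0'(-h_0),\ h'(0)=-\mu u_0'(h_0),\ \|g'\|_{C^{\alpha/2}}+\|h'\|_{C^{\alpha/2}}\le K\Big\},
\end{equation*}
where $K$ is chosen from the data and the compatibility $u_0(\pm h_0)=0$ in $(\ref{system-2})$ guarantees consistency at $t=0$. For each $(g,h)\in\mathcal{G}_T$ the transformed system has prescribed coefficients, so by parabolic $L^p$ theory it has a unique strong solution $(w,z)\in (W_p^{2,1})^2$; taking $p$ large and using the Sobolev embedding yields $(w,z)\in (C^{1+\alpha,(1+\alpha)/2})^2$ together with a bound depending only on $\alpha,h_0,\|u_0\|_{C^2},\|v_0\|_{C^2}$ and $K$. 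Feeding $w_y(\pm h_0,\cdot)$ into the integrated Stefan relations defines a map $\mathcal{F}(g,h)=(\tilde g,\tilde h)$, and the boundary-flux estimate shows $\|\tilde g'\|_{C^{\alpha/2}}+\|\tilde h'\|_{C^{\alpha/2}}$ remains below $K$ once $T$ is small, so $\mathcal{F}$ maps $\mathcal{G}_T$ into itself.

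Finally I would prove that $\mathcal{F}$ is a contraction for $T$ small. Given $(g_1,h_1),(g_2,h_2)\in\mathcal{G}_T$ with solutions $(w_1,z_1),(w_2,z_2)$, the difference solves a linear parabolic system whose source and coefficient discrepancies are all controlled, via $(\textbf{H1})$ and the explicit form of the straightening coefficients, by $\|g_1-g_2\|_{C^1}+\|h_1-h_2\|_{C^1}$. Schauder (or $L^p$) estimates for this difference system then bound $\|(w_1-w_2)_y(\pm h_0,\cdot)\|_{C^{\alpha/2}}$, and integrating the Stefan relations in time produces a prefactor $T^{\theta}$ with $\theta>0$; hence for $T$ small $\mathcal{F}$ is a contraction on $\mathcal{G}_T$. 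The Banach fixed point theorem gives a unique $(g,h)$, and undoing the change of variables returns the unique solution $(u,v,g,h)$ in the asserted class, with $T$ depending only on $\alpha,h_0,\|u_0\|_{C^2([-h_0,h_0])},\|v_0\|_{C^2([-h_0,h_0])}$. The main obstacle is precisely this contraction step: one must track how the interior solution, and in particular its normal derivative at $y=\pm h_0$, depends on $(g,h)$ through the transformation, and extract the gain in $T$ needed to beat the Lipschitz constants coming from the heterogeneous coefficients. The coupling of $u$ and $v$ is harmless here, since only $u_x$ enters the free boundary conditions, so the two equations may be estimated together as a single parabolic system.
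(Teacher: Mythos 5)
Your proposal is correct in substance and follows the same technical skeleton as the paper: straighten the moving domain onto a fixed interval, solve the transformed semilinear parabolic system for prescribed $(g,h)$ via $L^p$/Schauder theory, feed the boundary flux back into the integrated Stefan relations, and close a fixed-point argument for small $T$. The one genuine difference is how the fixed point is organized. The paper applies the \emph{Schauder} fixed point theorem to a map $\mathcal{F}(m,n,g,h)=(\overline m,\overline n,\overline g,\overline h)$ on the product $\mathcal{D}_1\times\mathcal{D}_2\times\Theta_T$, which only requires continuity and compactness of the solution operator, and then proves uniqueness in a separate Step 2 by estimating the difference of two solutions. You instead set up a \emph{Banach} contraction on the boundary pair $(g,h)$ alone, which delivers existence and uniqueness simultaneously. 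These are not really independent routes: the contraction estimate you need (Lipschitz dependence of $w_y(\pm h_0,\cdot)$ on $(g,h)$ in $C^{\alpha/2}$, with a gain of $T^{\theta}$) is exactly the content of the paper's uniqueness step, including its inequalities (\ref{mn}) and (\ref{equality-8}); so your approach buys a more economical write-up at the price of having to do the hard difference-system estimate up front, while the paper's buys a softer existence proof at the price of a second pass for uniqueness. Two points you should make explicit if you write this out: (i) for fixed $(g,h)$ the transformed system is still \emph{semilinear} in $(w,z)$, so plain linear $L^p$ theory does not by itself produce the solution --- you need the standard local existence theorem for semilinear parabolic systems (as the paper invokes from \cite{1968linear}), with $(\textbf{H1})$ guaranteeing the nonlinearity is locally Lipschitz; and (ii) the H\"older seminorm estimate for $(w_1-w_2)_y$ must be obtained with a constant independent of $T^{-1}$ (possible because the difference has zero initial data), otherwise the factor $T^{\theta}$ cannot beat the constants --- the paper flags this explicitly via the estimates of Theorem 5.5.4 in \cite{wangbook}. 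Neither point is a gap in your plan, but both are where a careless execution would fail.
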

\begin{proof}
	We divide this proof into two steps.\\
	\textbf{Step 1} The local existence of the solution for 
	problem 
	(\ref{system-3}).
	
	Let 
	\begin{equation}\label{equality-1}
		\begin{aligned}
			&y=\frac{2x}{h(t)-g(t)}-\frac{h(t)+g(t)}{h(t)-g(t)}, 
			\\&m(y,t)=u(x,t),
			n(y,t)=v(x,t),
			\\& \tilde f_1(y,t,m,n)=f_1(x,t,u,v),
			\\& \tilde f_2(y,t,m,n)=f_2(x,t,u,v),
		\end{aligned}
	\end{equation}
	then direct calculation gives
	\begin{equation}\label{equality-101}
		\begin{aligned}
			&\frac{\partial y}{\partial 
				x}=\frac{2}{h(t)-g(t)}:=\sqrt{A(y,g(t),h(t))},  
			\\&\frac{\partial y}{\partial 
				t}=-\frac{y({h^\prime(t)}-{g^\prime(t)})+({h^\prime(t)}
				+{g^\prime(t)})}{h(t)-g(t)}
			\\&\enspace\enspace:=B(y,g(t),g^\prime(t),h(t),{h^\prime(t)}),
		\end{aligned}
	\end{equation}
	and $(m,n)$ satisfy the following system,
	\begin{equation}\label{system-4}
		\left\{\begin{array}{ll}{m_{t}-D_{1}Am_{yy}+Bm_{y}=\tilde 
				f_1(y,t,m,n),} 
			& {y\in (-1,1),0<t\leq T,}\\{n_{t}-D_{2}An_{yy}+Bn_{y}=\tilde 
				f_2(y,t,m,n),} 
			& {y\in (-1,1),0<t\leq T,} \\ {m(\pm 1,t)=0 , n(\pm1,t)=0,} & 
			{0<t\leq 
				T, } \\ 
			{m(y,0)=u_{0}( 
				h_0 
				y),n(y,0)=v_{0}(h_0 y),} & {y\in[-1,1].} \end{array}\right.
	\end{equation}
	Meanwhile, $h(t)$ and $g(t)$ satisfy 
	\begin{equation}\label{system-5}
		\left\{\begin{array}{ll}{h(0)=h_0,h^{\prime}(t)=-\mu 
				\frac{2}{h(t)-g(t)}m_y(1,t),}&{0<t\leq 
				T,}\\{g(0)=-h_0,g^{\prime}(t)=-\mu 
				\frac{2}{h(t)-g(t)}m_y(-1,t),}&{0<t\leq T.}
		\end{array}\right.
	\end{equation}	
	Next, we will show the existence of 
	solution for 
	(\ref{system-4}) with (\ref{system-5}).
	
	Let $$h^*=-\mu u^{\prime}_0(h_0), g^*=-\mu u^{\prime}_0(-h_0),$$ 
	$$T_0=\min\left\{1,\frac{h_0}{2(2+h^*)},\frac{h_0}{2(2-g^*)}\right\},$$
	\begin{equation}\label{ga}
		\Gamma=\left\{h_0,h^{*},g^{*},||u_0||_{C^2([-h_0,h_0])},
		||v_0||_{C^2
			([-h_0,h_0])}\right\},
	\end{equation} 
	\[\Theta_T=\{(g,h)\in (C^1([0,T]))^2
	\mid h(0)=h_0, g(0)=-h_0, 
	h^{\prime}(0)=h^*,g^{\prime}(0)=g^*,\] 
	\[||h^{\prime}-h^*||_{L^\infty}\leq 
	1 ,||g^{\prime}-g^*||_{L^\infty}\leq 
	1 \},\] then $h^*>0,g^*<0$ and $\Theta_T$ is a bounded closed convex 
	subset of 
	$(C^{1}([0,T_0]))^2$ for 
	any $0<T\leq 
	T_0$. 
	
	Let 
	\[\Theta^*_{T_0}=\{(g,h)\in 
	(C^1([0,T_0]))^2\mid 
	h(0)=h_0,h^{\prime}(0)=h^*,g^{\prime}(0)=g^*,||h^{\prime}
	-h^*||_{L^\infty}\leq
	2,\]
	\[||g^{\prime}-g^*||_{L^\infty}\leq2 \}.\]
	For any given $(g,h)\in \Theta_T$, we can extend $h$ and $g$ such that 
	$(g,h)\in 
	\Theta^*_{T_0}$. 
	Hence, if $(g,h)\in \Theta_T$, then $(g,h)\in \Theta^*_{T_0}$.  
	And	$h(t)$ and $g(t)$ satisfy
	\begin{equation}
		\begin{aligned}
			&|h(t)-h_0|\leq T_0||h^{\prime}||_\infty\leq T_0(2+h^*)\leq 
			\frac{h_0}{2},
			\\&|g(t)-(-h_0)|\leq T_0||g^{\prime}||_\infty\leq 
			T_0(2+g^*)\leq T_0(2-g^*)\leq
			\frac{h_0}{2}
		\end{aligned}
	\end{equation}
	for 
	any $t\in [0,T_0]$, 
	then 
	$h(t)\in[\frac{h_0}{2},\frac{3 h_0}{2}]$ and 
	$g(t)\in[-\frac{3h_0}{2},-\frac{ 
		h_0}{2}]$ in $[0,T_0]$. Hence, 
	the transformations (\ref{equality-1})  and (\ref{equality-101}) are well 
	defined for 
	$t\in [0,T_0]$. Applying the standard parabolic equation theory 
	(\cite{1968linear}), 
	there 
	exists a $T_*\in (0,T_0]$ such that 
	there is 
	a unique solution $( {\overline m}(y,t),\overline n(y,t))\in 
	(C^{1+\alpha,\frac{1+\alpha}{2}}(\Delta_{T_*}))^2$ for problem 
	(\ref{system-4}) 
	with 
	$T_*$ 
	dependent on $\Ga, \alpha,
	||u_0||_\infty$ and $|||v_0||_\infty$.
	And there exists a positive constant $C_1(\Ga,\alpha,T_*,T^{-1}_*)$ 
	such that
	\[
	||\overline 
	m||_{C^{1+\alpha,\frac{1+\alpha}{2}}(\Delta_{T_*})}+
	+||\overline 
	n||_{C^{1+\alpha,\frac{1+\alpha}{2}}(\Delta_{T_*})}\leq
	C_1(\Ga,\alpha,T_*,T^{-1}_*),\]
	where $\Delta_{T_*}=[-1,1]\times [0,T_*]$.
	In view of the choice of $\Gamma$ in (\ref{ga}), $T_*$ is only dependent on 
	$\Gamma$ and $\alpha$. Hence, \[||\overline 
	m||_{C^{1+\alpha,\frac{1+\alpha}{2}}(\Delta_{T_*})}+
	+||\overline 
	n||_{C^{1+\alpha,\frac{1+\alpha}{2}}(\Delta_{T_*})}\leq
	C_1,\]
	with $C_1$ dependent on $\Gamma,\alpha$.
	Moreover, for $0<T\leq T_*$, it follows
	\begin{equation}\label{inequality-1}
		||\overline 
		m||_{C^{1+\alpha,\frac{1+\alpha}{2}}(\Delta_{T})}+
		||\overline 
		n||_{C^{1+\alpha,\frac{1+\alpha}{2}}(\Delta_{T})}\leq
		C_1.
	\end{equation}
	Since $(\overline m(y,0)$ and $ \overline n(y,0)) $ are more than but not 
	identically
	equal to 0 for $y\in 
	[-1,1]$, $\tilde f_i(y,t,0,0)\geq 0$, $a_iN_i\geq 0$ on $[-1,1]\times[0,T]$
	and $\tilde 
	f_i(y,t,m,n)$ satisfies $\textit{(\textbf{H1})}$ for $i=1,2$,
	by the maximum principle (resp. Positivity Lemma, \cite{wang2021}), 
	then $(\overline 
	m, \overline 
	n)>0$ for $(y,t)\in 
	(-1,1)\times (0,T]$. 
	
	Consider that the solution 
	$(\overline m,\overline n)$ depends continuously on the initial data 
	$(g,h)\in \Theta_T$. Let
	\begin{equation}
		\overline h(t)=h_0-\mu \int^{t}_{0}\dfrac{2}{h(s)-g(s)}\overline 
		m_y(1,s)ds,\overline 
		g(t)=-h_0-\mu \int^{t}_{0}\dfrac{2}{h(s)-g(s)}\overline m_y(-1,s)ds,
	\end{equation}
	for $t\in [0,T]$,
	then $(\overline g,\overline h)$ depend on $(g,h)\in \Theta_T$ and
	\[\overline h(0)=h_0,\overline h^{\prime}(0)=h^*, \bar h^{\prime}(t)>0,
	\overline g(0)=-h_0,\overline g^{\prime}(0)=g^*, \bar g^{\prime}(t)<0.
	\]
	Moreover, it follows
	\begin{equation}\label{inequality-2}
		\begin{aligned}				
			&	\overline h^{\prime}(t) \in C^{\frac{\alpha}{2}}([0,T]), 
			||\overline 
			h^{\prime}(t)||_{C^{\frac{\alpha}{2}}([0,T])}\leq 
			C_2,
			\\&\overline g^{\prime}(t) \in 
			C^{\frac{\alpha}{2}}([0,T]), 
			||\overline 
			g^{\prime}(t)||_{C^{\frac{\alpha}{2}}([0,T])}\leq 
			C_2,
		\end{aligned}
	\end{equation}
	for some $C_2$ dependent on $\Gamma, \alpha$.
	
	Define $\mathcal{F}:\mathcal{D}_1\times \mathcal{D}_2\times 
	\Theta_T\longrightarrow 
	C(\Delta _T)\times 
	C(\Delta _T)\times (C^1({[0,T]}))^2$ by
	\[\mathcal{F}(m,n,g,h)=(\overline m, \overline n, \overline g, 
	\overline h),\] 
	where
	\[\mathcal{D}_1=\{m\in C(\Delta 
	_T)|m(y,0)=u_0(h_0y),||m-u_0||_{C(\Delta 
		_ T)}\leq1\},\]
	\[\mathcal{D}_2=\{n\in C(\Delta_ 
	T)|n(y,0)=v_0(h_0y),||n-v_0||_{C(\Delta_ 
		T)}\leq 
	1\}.\]
	It is easily to see that $\mathcal{F}(m,n,g,h)=( m,  n, g, h)$ if and 
	only if 
	$(m,n,g,h)$ is the solution of (\ref{system-4}) with (\ref{system-5}).
	
	Combining (\ref{inequality-1}) and (\ref{inequality-2}), it follows
	\begin{equation}
		\begin{aligned}
			&||\overline m-u_0||_{C(\Delta _T)}+||\overline 
			n-v_0||_{C(\Delta_ 
				T)}\\&\leq 
			||\overline 
			m-u_0||_{C^{{\frac{1+\alpha}{2}},0}(\Delta 
				_T)}T^{{\frac{1+\alpha}{2}}}+||\overline 
			n-v_0||_{C^{{\frac{1+\alpha}{2}},0}(\Delta 
				_T)}T^{{\frac{1+\alpha}{2}}}
			\\&\leq 
			C_1 T^{\frac{1+\alpha}{2}}
		\end{aligned}
	\end{equation}
	and
	\[||\overline h^{\prime}-h^{*}||_{C([0,T])}\leq ||\overline 
	h^{\prime}||_{C^{\frac{\alpha}{2}}([0,T])}T^{\frac{\alpha}{2}}\leq 
	C_2 T^{\frac{\alpha}{2}},\]
	\[||\overline g^{\prime}-g^{*}||_{C([0,T])}\leq ||\overline 
	g^{\prime}||_{C^{\frac{\alpha}{2}}([0,T])}T^{\frac{\alpha}{2}}\leq 
	C_2 T^{\frac{\alpha}{2}}.\]
	Therefore, if we take 
	$T=\min\left\{1,\frac{h_0}{2(2+h^*)}, \frac{h_0}{2(2-g^*)}, 
	C^{-\frac{2}{1+\alpha}}_1,
	C^{-\frac{2}{\alpha}}_2\right\}
	$, then $\mathcal F$ maps 
	$\mathcal{D}_1\times \mathcal{D}_2\times\Theta_T$ into itself.
	Further, we can get that $\mathcal F$ is compact.  Applying the 
	Schauder fixed point theorem 
	to $\mathcal F$, there exists 
	a solution $(m,n,g,h)\in \mathcal{D}_1\times \mathcal{D}_2\times 
	\Theta_T$. Applying the Schauder estimates, 
	$(m,n,g,h)\in (C^{1+\alpha,\frac{1+\alpha}{2}}([-1,1]\times 
	[0,T]))^2\times C^{1+\frac{\alpha}{2}}([0,T])^2$ 
	of system
	(\ref{system-4}) with (\ref{system-5}). Hence, the problem 
	(\ref{system-3}) has a solution $(u,v,g,h)\in 
	(C^{1+\alpha, \frac{1+\alpha}{2}}([g(t),h(t)]\times [0,T]))^2\times 
	C^{1+\frac{\alpha}{2}}([0,T])^2$.
	
	\textbf{\textbf{Step 2}} The uniqueness of the solution for 
	problem 
	(\ref{system-3}).
	
	Assume that $(u_i,v_i,g,h) (i=1,2) \in \mathcal{D}_1\times 
	\mathcal{D}_2\times\Theta_T$ are the two solutions of (\ref{system-3}) 
	for 
	$0<T\ll 1$.
	Applying the strong maximum 
	principle to $u_i$, we can get that $u_i(x,t)>0$ for $x\in (g(t),h(t))$ 
	and 
	$0<t<T$. 
	In 
	view of 
	$u_i(t,h(t))=0,u_i(t,g(t))=0$, it follows  
	${u_i}_x(t,h(t))<0,{u_i}_x(t,g(t))>0$ 
	for 
	$i=1,2$, which 
	implies 
	$h^{\prime}(t)>0,g^{\prime}(t)<0$ for $t\in (0,T)$, then we can
	suppose that\begin{equation}\label{equality-9} 
		h_0\leq h(t)\leq h_0+1, -h_0-1\leq g(t)\leq -h_0
	\end{equation} 
	for $t\in[0,T]$,
	and $$u_i\leq ||u_0||_\infty +1, 
	v_i\leq ||v_0||_\infty +1$$ in $[g(t),h(t)]\times [0,T]$ for $i=1,2$.
	
	As in transformations (\ref{equality-1}), take $$m_i(y,t)=u_i({x},t), 
	n_i(y,t)=v_i({x},t)$$ 
	for 
	$i=1,2$, then 
	$(y,t)\in [-1,1]\times [0,T]$.
	
	Let $$m=m_1-m_2, n=n_1-n_2, h=h_1-h_2,g=g_1-g_2,$$ direct calculation 
	gives 
	the following system,
	\begin{equation}\label{system-6}
		\left\{\begin{array}{ll}{m_{t}-D_{1}A_1(y,t)m_{yy}+B_1(y,t)m_{y}
				-a_1(y,t)m-\tilde 
				a_1(y,t)n}\\{=D_1(A_1-A_2){m_2}_{yy}+(B_2-B_1){m_2}_y+
				b_1(y,t)\frac{y(h-g)+(h+g)}{2}}, & {y\in
				(-1,1),0<t\leq T},
			\\{n_{t}-D_{2}A_1(y,t)n_{yy}+B_1(y,t)n_{y}
				-a_2(y,t)m-\tilde 
				a_2(y,t)n}\\{=D_2(A_1-A_2){n_2}_{yy}+(B_2-B_1){n_2}_y
				+b_2(y,t)\frac{y(h-g)+(h+g)}{2}},
			&
			{y\in (-1,1),0<t\leq T},
			\\ {m(\pm 1,t)=0 , n(\pm1,t)=0}, 
			&{0<t\leq 
				T},  
			\\ {m(y,0)=n(y,0)=0},
			&{y\in(-1,1)}, 
		\end{array}\right.
	\end{equation}
	with 
	\begin{equation} \label{equality-5}
		\begin{aligned}
			h^\prime(t)=\mu(\frac{2}{h_2(t)-g_2(t)}{m_2}_y(1,t)-
			\frac{2}{h_1(t)-g_1(t)}{m_1}_y(1,t)),
			&\\g^\prime 
			(t)=\mu(\frac{2}{h_2(t)-g_2(t)}{m_2}_y(-1,t)-
			\frac{2}{h_1(t)-g_1(t)}{m_1}_y(-1,t)),
		\end{aligned}
	\end{equation} 
	for $0<t\leq 
	T,h(0)=0,g(0)=0,$ $i=1,2,$
	where  
	\[ A_i(y,t)=\frac{4}{(h_i(t)-g_i(t))^2},
	\]					
	\[B_i(y,t)=-\frac{y({h^\prime_i(t)}-{g^\prime_i(t)})+({h^\prime_i(t)}
		+{g^\prime_i(t)})}{h_i(t)-g_i(t)},\]
	\[b_i(y,t)=\int_{0}^{1} {f_i}_x(t,H(h_1,h_2,g_1,g_2,s),m_2,n_2)ds,\]
	\[\tilde a_i(y,t)=\int_{0}^{1} {f_i}_n(t,\frac{y(h_1 
		-g_1)+(h_1+g_1)}{2},m_2,n_2+s(m_1-m_2))ds,\]
	\[ a_i(y,t)=\int_{0}^{1} {f_i}_m(t,\frac{y(h_1 
		-g_1)+(h_1+g_1)}{2},m_2+s(m_1-m_2),n_1))ds,
	\]
	\[H(h_1,h_2,g_1,g_2)=\frac{y(h_2+s(h_1 -h_2)-(g_2+s(g_1 
		-g_2))+(h_2+s(h_1-h_2)+(g_2+s(g_1-g_2)))}{2}.\]
	In view of $(\textbf{\textit{H1}})-(\textbf{\textit{H5}})$, we can get 
	$a_i,\tilde a_i, 
	b_i\in 
	L^\infty(\Delta_T)$ for $i=1,2$ with $||a_i(y,t)||_{L^\infty},||\tilde 
	a_i(y,t)||_{L^\infty}$ and 
	$||b_i(y,t)||_{L^\infty}$ dependent on 
	$h_0,||u_0||_{L^\infty}$ and $||v_0||_{L^\infty}$.
	In view of $(\ref{inequality-1})-(\ref{equality-9})$, applying $L^p$ 
	theory for parabolic equations and Sobolev imbedding theorem to system 
	$(\ref{system-6})$, 
	there are positive constants $C_3,C_4$ and 
	$C_5$ which depend on $\Gamma, \alpha$ such that
	\begin{equation}\label{mn}
		\begin{aligned}
			&||m||_{C^{1+\alpha,\frac{1+\alpha}{2}}(\Delta 
				_T)}+||n||_{C^{1+\alpha,\frac{1+\alpha}{2}}(\Delta_T)}\leq 
			C_3 (D_1 
			||((h_1-g_1)^{-2}-(h_2-g_2)^{-2}){m_2}_{yy}||_{C(\Delta_ 
				T)}\\&+|b_1 
			\dfrac{y(h-g)+(h+g)}{2}||_{C(\Delta_ T)} 
			+||(\dfrac{y(h^\prime_1-g^\prime_1)
				+(h^\prime_1+g^\prime_1)}{h_1(t)-g_1(t)}
			-\dfrac{y(h^\prime_2-g^\prime_2)+(h^\prime_2+
				g^\prime_2)}{h_2(t)-g_2(t)}){m_2}_y||_{C(\Delta
				_T)})
			\\&+C_3 
			(D_2 
			||((h_1-g_1)^{-2}-(h_2-g_2)^{-2}){n_2}_{yy}||_{C(\Delta_ 
				T)}+|b_2 
			\dfrac{y(h-g)+(h+g)}{2}||_{C(\Delta_ T)}
			\\&+||(\dfrac{y(h^\prime_1-g^\prime_1)+(h^\prime_1
				+g^\prime_1)}{h_1(t)-g_1(t)}-
			\dfrac{y(h^\prime_2-g^\prime_2)+(h^\prime_2+
				g^\prime_2)}{h_2(t)-g_2(t)}){n_2}_y||_{C(\Delta
				_T)})
			\\&\leq 
			C_4(||h||_{C^1([0,T]))}+||g||_{C^1([0,T]))}
			+||h-g||_{C^1([0,T]))}
			+||h+g||_{C^1([0,T]))})
			\\&\leq C_5(||h||_{C^1([0,T]))}+||g||_{C^1([0,T]))}).
		\end{aligned}
	\end{equation}

	Applying the proofs of $(5.4.3)$ and Theorem 
	$5.5.4$ (\cite {wangbook}) to
	$m_y(y,t)$ and $n_y(y,t)$, without needing to expand $m$ and $n$ to a 
	larger domain, we obtain that there exists a positive constant $\tilde 
	C_1$ 
	independent of 
	$T^{-1}$ 
	such 
	that
	$$
	\begin{array}{c}
		{[m]_{C^{\alpha,\frac{\alpha}{2}}(\Delta_ 
				T)}+[m_y]_{C^{\alpha,\frac{\alpha}{2}}(\Delta_ T)}\leq 
			\tilde	C_1||m||_{C^{1+\alpha,\frac{1+\alpha}{2}}(\Delta_T)},}\\
		{[n]_{C^{\alpha,\frac{\alpha}{2}}(\Delta_ 
				T)}+[n_y]_{C^{\alpha,\frac{\alpha}{2}}(\Delta_ T)}\leq 
			\tilde C_1||n||_{C^{1+\alpha,\frac{1+\alpha}{2}}(\Delta_T)},}
	\end{array}
	$$
	where [ $\cdot $ ] is the H$\ddot{\rm o}$lder seminorm.
	Therefore, according to (\ref{mn}) and the above inequalities, it follows  
	that
	\begin{equation}\label{equality-8}
		\begin{aligned}
			&	{	[m_y]_{C^{\alpha,\frac{\alpha}{2}}(\Delta _T)}\leq 
				\tilde	C_1 
				C_5(||h||_{C^1([0,T]))}+||g||_{C^1([0,T]))}),} \\
			&	{[n_y]_{C^{\alpha,\frac{\alpha}{2}}(\Delta _T)}\leq 
				\tilde	C_1 
				C_5(||h||_{C^1([0,T]))}+||g||_{C^1([0,T]))}).}
		\end{aligned}
	\end{equation}
	Combining (\ref{equality-5}) and (\ref{equality-8}), there is $C_6$ 
	dependent on $\Gamma, \alpha$ such that
	\begin{equation}
		\begin{aligned}
			&[h^\prime]_{C_{\frac{\alpha}{2}([0,T])}}\leq 
			\mu[\frac{2}{h_1-g_1}m_y(1,t)]_{C_{\frac{\alpha}{2}([0,T])}}
			+\mu[(\frac{2}{h_1-g_1}-\frac{2}{h_2-g_2})
			{m_2}_y(1,t)]_{C_{\frac{\alpha}{2}([0,T])}}
			\\&\quad\quad\quad\quad\enspace \leq 
			C_6(||h||_{C^1([0,T])}+||g||_{C^1([0,T])})
		\end{aligned}
	\end{equation}
	and
	\begin{equation}
		\begin{aligned}
			&[g^\prime]_{C_{\frac{\alpha}{2}([0,T])}} \leq 
			\mu[\frac{2}{h_1-g_1}m_y(-1,t)]_{C_{\frac{\alpha}{2}([0,T])}}
			+\mu[(\frac{2}{h_1-g_1}-\frac{2}{h_2-g_2})
			{m_2}_y(-1,t)]_{C_{\frac{\alpha}{2}([0,T])}}
			\\&\quad\quad\quad\qquad\leq 
			C_6(||h||_{C^1([0,T])}++||g||_{C^1([0,T])}).
		\end{aligned}
	\end{equation}
	Since $h(0)=h^{\prime}(0)=0$ and $g(0)=g^{\prime}(0)=0$, then 
	$$
	\begin{array}{c}
		||h-h(0)||_{C^1([0,T])}\leq 
		2||h^{\prime}-h^{\prime}(0)||_{C^{\frac{\alpha}{2}}([0,T])}
		T^{\frac{\alpha}{2}}
		\leq \tilde C_6||h||_{C^1([0,T])}T^{\frac{\alpha}{2}},
		\\
		||g-g(0)||_{C^1([0,T])}\leq 
		2||g^{\prime}-g^\prime(0)||_{C^{\frac{\alpha}{2}}([0,T])}
		T^{\frac{\alpha}{2}}
		\leq \tilde C_6||g||_{C^1([0,T])}T^{\frac{\alpha}{2}}.
	\end{array}
	$$
	Therefore, if $T$ is small enough, then $h=0$ and $g=0$, which 
	implies $m=0$ 
	and $n=0$.
	Thus, the local existence and uniqueness of the solution have been 
	proved. 
\end{proof}
\begin{proof}[Proof of Theorem \ref{existence}] 
	Let $f_i$ be defined by (\ref{fi}) for $i=1,2$. In view that 
	$a_1(x,t),a_2(x,t),d_1(x,t),d_2(x,t)\in 
	C^{2+\alpha,1+\frac{\alpha}{2}}(\mathbb R\times [0,\infty))$  for 
	any $\alpha\in(0,\alpha_0)$ and have positive upper and lower bound, it 
	follows that
	$f_i(x,\cdot,U,V)\in C^{1+\frac{\alpha}{2}}([0,T])(i=1,2)$ for the  
	$T$ in Theorem \ref{lemma-existence1}.  
	
	Make the transformations as (\ref{equality-1}), combining 
	(\ref{inequality-1}) with (\ref{inequality-2}),
	then, it can be obtained that
	$$\tilde 
	f_i(y,t):=f_i(\dfrac{y(h(t)-g(t))+(h(t)+g(t))}{2},t,m(y,t),n(y,t))\in 
	C^{\alpha,\frac{\alpha}{2}}([-1,1]\times [0,T]).$$
	Using the Schauder theory for parabolic equations to system 
	(\ref{system-4}) and (\ref{system-5}), we can get that 
	$(m,n,g,h)\in 
	(C^{2+\alpha,1+\frac{\alpha}{2}}([-1,1]\times (0,T]))^2\times 
	C^{1+\frac{\alpha}{2}}((0,T])^2.$
	Since the system 
	(\ref{system-1}) can be regarded as the 
	special case of (\ref{system-3}), and satisfies all of the assumptions in 
	Theorem \ref{lemma-existence1}, thus, the system (\ref{system-1}) admits a  
	unique solution $(U,V;g,h)\in 
	(C^{2+\alpha,1+\frac{\alpha}{2}}([g(t),h(t)]\times (0,T]))^2\times 
	C^{1+\frac{\alpha}{2}}((0,T])^2$. Thus, the 
	local existence and uniqueness of solution for system (\ref{system-1}) are 
	proved.
\end{proof}	

For the convenience of later proof, we provide 
the following Comparison Principle in order to estimate the boundness of 
$U(x,t)$, 
$V(x,t)$ for system 
(\ref{system-1}) and 
the free boundaries $x=g(t)$, $x=h(t)$. The lemma is similar to Lemma 3.5 
in \cite{du2010spreading}.
\begin{lemma}[Comparison Principle]\label{lemma-comparison}\quad
	Assume that $T\in(0,+\infty)$, $\overline{h}(t), \overline{g}(t)\in 
	C^{1}([0,T])$, $\overline{U},\overline{V}\in 
	C(\overline{D_{T}^{*}})\bigcap C^{2,1}(D_{T}^{*})$ with $0<\overline 
	U\leq N_1, 0<\overline V\leq N_2$ and $(\overline 
	U,\overline V;\overline h,\overline g)$ satisfies
	\begin{equation}\label{2.4}
		\left\{\begin{array}{ll}{\overline{U}_{t}-D_{1} \overline{U}_{x x} \geq 
				a_1(x,t)(N_1-\overline U)\overline V-d_1(x,t)\overline U,} & 
			{\overline{g}(t)<x<\overline{h}(t), \enspace 0<t<T,} 
			\\ {\overline{V}_{t}-D_{2} \overline{V}_{x x} \geq 
				a_2(x,t)(N_2-\overline V)\overline U-d_2(x,t)\overline V,} & 
			{\overline{g}(t)<x<\overline{h}(t), \enspace 0<t<T,} 
			\\ {\overline{U}(x, t)\geq 0,\overline{V}(x, t)\geq 0,} 
			& {x=\overline{g}(t)\enspace \rm or\enspace \overline{h}(t), 
				\enspace 
				0<t<T,} \\  {\overline h(0)\geq h_0, \overline{h}^{\prime}(t) 
				\geq-\mu 
				\overline{U}_{x}(\overline{h}(t), t),} & {\enspace 0<t<T,}\\ 
			{\overline g(0)\leq -h_0, \overline{g}^{\prime}(t) \leq-\mu 
				\overline{U}_{x}(\overline{g}(t), t),} & {\enspace 0<t<T,}
			\\\overline{U}(x, 0)\geq {U_{0}(x), \enspace\overline{V}(x, 0)\geq 
				V_{0}(x),} & {-h_{0} \leq x \leq h_{0},}\end{array}\right.
	\end{equation}
	then the solution  $(U,V;g,h)$ of (\ref{system-1}) satisfies
	\begin{equation}\label{2.5}
		\begin{aligned}
			&\overline{ U}(x,t)\geq {U}(x,t), \enspace \overline{V}(x,t)\geq 
			{V}(x,t),
			\\ & \overline{h}(t)\geq {h}(t),\enspace {g}(t)\geq \overline{ 
				g}(t),\enspace 
			{\rm for}\enspace g(t)\leq x\leq h(t),t\in(0,T],
		\end{aligned}
	\end{equation}
	where $D_{T}^{*}=\{(x,t)\in\mathbb{R}^{2}\mid x\in(\overline 
	g(t),\overline{h}(t)),t\in(0,T]\}$.
\end{lemma}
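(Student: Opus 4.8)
The plan is to follow the strategy of Lemma 3.5 in \cite{du2010spreading}, adapting it from a single equation to the cooperative two-species system. The first thing I would record is the structural fact that makes a simultaneous comparison for the pair $(U,V)$ possible: on the biologically relevant range $0\le U\le N_1$, $0\le V\le N_2$ one has $\partial f_1/\partial V=a_1(x,t)(N_1-U)\ge0$ and $\partial f_2/\partial U=a_2(x,t)(N_2-V)\ge0$, so the reaction pair \eqref{fi} is quasimonotone nondecreasing (cooperative). Hence on any common subdomain the difference $(w_1,w_2):=(\overline U-U,\overline V-V)$ solves a weakly coupled linear parabolic system (after writing the reaction differences as integrals of the derivatives of $f_i$ along the segment joining the two states) whose off-diagonal coupling coefficients are nonnegative, and to such a system the maximum principle for cooperative systems applies. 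Thus, granting the domain ordering, nonnegative parabolic boundary data for $(w_1,w_2)$ will force $w_1,w_2\ge0$.

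The heart of the matter is the ordering of the free boundaries, $h(t)\le\overline h(t)$ and $g(t)\ge\overline g(t)$ on $(0,T]$; once this holds, the domain $[g(t),h(t)]$ of the genuine solution sits inside $[\overline g(t),\overline h(t)]$ and the interior comparison above is immediate. I would obtain the boundary ordering by a first-crossing-time argument. Since $\overline h(0)\ge h_0=h(0)$ and $\overline g(0)\le -h_0=g(0)$, and since a routine approximation (replacing $h_0$ by $h_0-\varepsilon$ with correspondingly modified initial data, solving, and letting $\varepsilon\downarrow0$ using continuous dependence on the data) lets me assume strict initial nesting, the set of times on which $g(s)>\overline g(s)$ and $h(s)<\overline h(s)$ for all $s\in(0,t]$ is nonempty; let $t^*$ be its supremum. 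Suppose, for contradiction, that $t^*<T$ and that, say, the right fronts meet first, $h(t^*)=\overline h(t^*)=:x^*$ (the left-front case being symmetric). On $(0,t^*]$ the domains are nested, so the cooperative comparison of the previous step yields $w_1=\overline U-U\ge0$ there, with $w_1>0$ along the moving curve $x=h(t)$ for $t<t^*$ by strong positivity of the supersolution in its interior (here $\overline U>0$ interiorly follows from the strong maximum principle, as $\overline U(\cdot,0)\ge U_0$ is nonnegative and not identically zero).

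At the touching point I would invoke the Hopf boundary lemma for $w_1$ at $(x^*,t^*)$: since $w_1\ge0$ in the solution's domain and attains there its boundary minimum value $0$ (where $U$ vanishes and $\overline U$ takes its front value $0$, as in \eqref{2.4}), the outward normal derivative is strictly negative, i.e. $\overline U_x(x^*,t^*)<U_x(x^*,t^*)$. Combining with the Stefan relations $h'(t^*)=-\mu U_x(x^*,t^*)$ and $\overline h'(t^*)\ge-\mu\overline U_x(x^*,t^*)$, and using $\mu>0$, gives $\overline h'(t^*)>h'(t^*)$; but $h$ can reach $\overline h$ from below at $t^*$ only if $h'(t^*)\ge\overline h'(t^*)$, a contradiction. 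Therefore $t^*=T$, and the analogous inequality at $x=g(t)$ controls the left front, establishing the full boundary ordering. I expect this boundary-velocity comparison through the Hopf lemma to be the main obstacle, since it is precisely the step that couples the parabolic comparison to the geometry of the moving fronts and requires careful bookkeeping of the sign of $w_{1,x}$ and of the supersolution's behaviour at its own Stefan boundary. With the nesting secured, I would close the proof by applying the cooperative maximum principle to $(w_1,w_2)$ on $\{g(t)\le x\le h(t),\,0<t\le T\}$, whose parabolic boundary data are nonnegative (at $t=0$ from \eqref{2.4}, and at $x=g(t),h(t)$ because $U=V=0\le\overline U,\overline V$ there), to conclude $\overline U\ge U$ and $\overline V\ge V$ throughout, which together with the front ordering is exactly \eqref{2.5}.
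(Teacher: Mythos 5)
The paper does not actually prove this lemma---it only remarks that the argument is ``similar to Lemma 3.5 in \cite{du2010spreading}''---and your proposal is precisely the proof that citation points to: reduction to strict initial nesting by an $\varepsilon$-perturbation, a first-crossing-time argument for the fronts, a Hopf-lemma/Stefan-condition contradiction at the touching point, and then the interior comparison on the nested domains. Your adaptation of the scalar argument to the two-component system is also handled correctly: since both $(U,V)$ (by Lemma \ref{bounded}) and $(\overline U,\overline V)$ (by hypothesis) stay in the rectangle $[0,N_1]\times[0,N_2]$, the segment joining them does too, so the integrated off-diagonal coefficients $a_1(x,t)(N_1-\cdot)\ge 0$ and $a_2(x,t)(N_2-\cdot)\ge 0$ make $(w_1,w_2)$ a supersolution of a weakly coupled cooperative linear system, to which the vector maximum principle and the scalar Hopf lemma (applied to $w_1$ after absorbing $c_{12}w_2\ge 0$ into the right-hand side) apply.

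There is, however, one concrete gap at the crux of the argument. At the touching point $(x^*,t^*)$ you apply the Hopf lemma to $w_1=\overline U-U$ on the grounds that $w_1$ attains the boundary minimum value $0$ there, ``where $U$ vanishes and $\overline U$ takes its front value $0$, as in \eqref{2.4}.'' But \eqref{2.4} only requires $\overline U\ge 0$ on $x=\overline h(t)$, not $\overline U=0$; the hypothesis as printed does not force $\overline U(x^*,t^*)=0$. If $\overline U(x^*,t^*)>0$, then $w_1(x^*,t^*)>0$ is not a minimum of $w_1$, the Hopf lemma gives no sign for $w_{1,x}(x^*,t^*)$, and the velocity comparison $\overline h'(t^*)>h'(t^*)$ does not follow (indeed, with $\overline U>0$ at the front, $\overline U_x(\overline h(t),t)$ can be positive and the constraint $\overline h'\ge-\mu\overline U_x$ permits $\overline h$ to decrease). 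In Du and Lin's Lemma 3.5 the supersolution is required to \emph{vanish} on its own front, which is exactly what makes this step work; you should either read the lemma's boundary condition as $\overline U=\overline V=0$ on $x=\overline g(t),\overline h(t)$ (almost certainly the authors' intent, and how the lemma is used throughout Section 5, e.g.\ in the proof of Theorem \ref{tm5}), or supply a separate argument ruling out a first touching with $\overline U(x^*,t^*)>0$. As written, the justification quoted from \eqref{2.4} is a misreading of the hypothesis rather than a proof of the needed equality. The remainder of the argument is sound.
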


\begin{remark}
	If $(\overline{U},\overline{V};\overline{g},\overline{h})$ satisfies 
	the conditions of Lemma \ref{lemma-comparison}, then it is called  the 
	upper 
	solution of (\ref{system-1}).
	The corresponding lower solution can be similarly defined by reversing 
	the above inequalities.	
\end{remark}	

In order to extend the local solution of (\ref{system-1}) to all 
$t\in(0,\infty)$, 
according  
to Lemma 2.2 
in 
\cite{du2010spreading} or Lemma 2.5 and  Lemma 2.6 in \cite{cheng2020}, we give 
the rough estimates about the
supper and lower bound of $U(x,t), V(x,t), g^\prime(t)$ and $h^\prime(t)$.
\begin{lemma}\label{bounded}
	Assume that $T\in(0,+\infty)$. Let $(U,V;g,h)$ be a solution of 
	(\ref{system-1}) for $t\in(0,T]$, then there exists a positive constant 
	$C>0$ independent 
	of $T$ such that
	\begin{equation}
		\begin{aligned}
			&	0<U(x,t)\leq  N_1,0<V(x,t)\leq  N_2,\enspace{\rm 
				for}\enspace g(t)< 
			x< h(t), 0< t\leq T,
			\\&0<h^{\prime}(t)\leq C, -C\leq g^{\prime}(t) <0,\enspace {\rm 
				for}\enspace 
			0<t\leq T.
		\end{aligned}
	\end{equation}
\end{lemma}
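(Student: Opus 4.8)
The plan is to establish the four assertions in turn, handling the pointwise bounds on $(U,V)$ first and then the gradient bounds that govern the free-boundary speeds. First I would establish $0<U\leq N_1$ and $0<V\leq N_2$, which express the positive invariance of the rectangle $[0,N_1]\times[0,N_2]$ under the reaction. On the face $U=N_1$ the nonlinearity satisfies $f_1(x,t,N_1,V)=-d_1(x,t)N_1\leq0$, and on $V=N_2$ one has $f_2(x,t,U,N_2)=-d_2(x,t)N_2\leq0$; together with $U=V=0$ on the lateral boundary and $0<U_0\leq N_1$, $0<V_0\leq N_2$ at $t=0$, a maximum-principle argument on the (now known) moving domain $\{g(t)<x<h(t)\}$ yields $U\leq N_1$ and $V\leq N_2$. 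Strict positivity in the interior then follows from the strong maximum principle applied to each equation separately: writing the first as $U_t-D_1U_{xx}+d_1U=a_1(N_1-U)V\geq0$ (valid since $U\leq N_1$, $V\geq0$) exhibits $U$ as a supersolution of a linear parabolic operator with nonnegative right-hand side, so $U_0\not\equiv0$ forces $U>0$, and symmetrically $V>0$.

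Next, the signs of the boundary velocities are immediate from the Hopf boundary lemma: since $U>0$ inside and $U(h(t),t)=U(g(t),t)=0$, we have $U_x(h(t),t)<0$ and $U_x(g(t),t)>0$, whence $h'(t)=-\mu U_x(h(t),t)>0$ and $g'(t)=-\mu U_x(g(t),t)<0$. In particular $h$ is increasing and $g$ decreasing, so $h(t)-g(t)\geq h(0)-g(0)=2h_0$ throughout.

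The main work, and the step I expect to be the principal obstacle, is the uniform upper bound on $h'(t)$ (and the matching lower bound on $g'(t)$), since it requires a barrier whose steepness at the free boundary is controlled independently of $T$. Following the device of Lemma 2.2 in \cite{du2010spreading}, I would fix a constant $M$, large enough that $M^{-1}<2h_0$ so that the strip fits inside the domain (recall $h(t)-g(t)\geq 2h_0$), and work in the thin moving strip $\Omega_M=\{(x,t): h(t)-M^{-1}<x<h(t),\ 0<t\leq T\}$, comparing $U$ against
\[
w(x,t)=N_1\big[\,2M(h(t)-x)-M^2(h(t)-x)^2\,\big].
\]
Using $0\leq U\leq N_1$ and $0\leq V\leq N_2$ one bounds the reaction term of the $U$-equation by the constant $K:=\overline a_1 N_1 N_2$, where $\overline a_1=\sup a_1$; a direct computation gives $w_{xx}=-2M^2N_1$ and $w_t\geq0$ on $\Omega_M$, so $w_t-D_1w_{xx}\geq 2D_1M^2N_1\geq K$ as soon as $M\geq\sqrt{\overline a_1 N_2/(2D_1)}$, and consequently $w-U$ is a supersolution of the pure heat operator there. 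The boundary data are favourable---$w=U=0$ on $x=h(t)$, $w=N_1\geq U$ on $x=h(t)-M^{-1}$, and $w(x,0)\geq U_0(x)$ once $M\geq\|U_0'\|_\infty/N_1$ (using $U_0(h_0)=0$)---so the maximum principle yields $U\leq w$ throughout $\Omega_M$.

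Since $U$ and $w$ both vanish at $x=h(t)$, comparing $x$-derivatives there gives $U_x(h(t),t)\geq w_x(h(t),t)=-2MN_1$, whence $h'(t)=-\mu U_x(h(t),t)\leq 2\mu MN_1$. The crucial point is that $M$---and therefore the bound---depends only on $D_1,N_1,N_2,\overline a_1,h_0$ and $\|U_0'\|_\infty$, never on $T$. A symmetric barrier near $x=g(t)$ controls $-g'(t)$ in the same way, and taking $C$ to be the larger of the two resulting constants completes the proof.
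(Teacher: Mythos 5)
Your argument is correct and is essentially the proof the paper itself points to: the paper states this lemma without writing out a proof, citing Lemma 2.2 of \cite{du2010spreading} (and Lemmas 2.5--2.6 of \cite{cheng2020}), and your barrier $w(x,t)=N_1\bigl[2M(h(t)-x)-M^2(h(t)-x)^2\bigr]$ on the strip $\Omega_M$, with $M$ chosen from $D_1$, $N_1$, $N_2$, $\sup a_1$, $h_0$ and $\Vert U_0'\Vert_\infty$, is exactly that construction. The only point to tighten is the first step: the bounds $0\le U\le N_1$, $0\le V\le N_2$ should be obtained jointly via the maximum principle for the cooperative system on the invariant rectangle (checking the inward-pointing condition on all four faces, including $f_1(x,t,0,V)=a_1N_1V\ge0$ and $f_2(x,t,U,0)=a_2N_2U\ge0$), since treating each equation separately as you sketch presupposes the nonnegativity of the other component.
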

Now we turn to show the global existence of the solution for problem 
$(\ref{system-1})$.
\begin{theorem}\label{lemma-existence2}
	For any given initial data
	$(U_{0},V_{0})$ 
	satisfying (\ref{system-2}), the unique solution $(U,V;g,h)$ of system
	$(\ref{system-1})$ exists for all 
	$t\in(0,\infty)$.
\end{theorem}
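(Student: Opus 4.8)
The plan is to run a continuation argument built on the local theory of Theorem~\ref{existence} together with the uniform a priori estimates of Lemma~\ref{bounded}. First I would fix initial data $(U_0,V_0)$ satisfying \eqref{system-2} and let $(0,T_{\max})$ be the maximal interval of existence of the unique solution $(U,V;g,h)$ of \eqref{system-1}; concretely, $T_{\max}$ is the supremum of all $T>0$ for which \eqref{system-1} admits a solution on $(0,T]$ in the class of Theorem~\ref{existence}. The goal is to show $T_{\max}=\infty$, and I would argue by contradiction, assuming $T_{\max}<\infty$.

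Under this assumption, Lemma~\ref{bounded} supplies a constant $C>0$ \emph{independent of $T$} with $0<U\le N_1$, $0<V\le N_2$ throughout, and $0<h'(t)\le C$, $-C\le g'(t)<0$ for $0<t<T_{\max}$. Two consequences are crucial. First, the moving domain neither blows up nor degenerates: since $h'>0$ and $g'<0$ the width $h(t)-g(t)$ is increasing, hence bounded below by $2h_0>0$, while the velocity bounds give $h(t)-g(t)\le 2h_0+2CT_{\max}$ from above; thus the straightening change of variables \eqref{equality-1}--\eqref{equality-101} stays nondegenerate with coefficients $A,B$ bounded and bounded away from zero uniformly on $(0,T_{\max})$, and $h,g\in C^1$ extend continuously to $t=T_{\max}$. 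Second, transferring to the fixed strip $[-1,1]\times(0,T_{\max})$ yields for $(m,n)$ the uniformly parabolic system \eqref{system-4}, whose coefficients and right-hand sides are controlled in $C^{\alpha,\alpha/2}$ uniformly in time by (\textbf{H1}) and the bounds above. Fixing any $\tau\in(0,T_{\max})$ and applying interior-in-time Schauder estimates on $[-1,1]\times[\tau,T_{\max})$, I obtain a bound on $\|m\|_{C^{2+\alpha,1+\alpha/2}}+\|n\|_{C^{2+\alpha,1+\alpha/2}}$ independent of how close $t$ is to $T_{\max}$, which translates back to a uniform bound $\|U(\cdot,t)\|_{C^2([g(t),h(t)])}+\|V(\cdot,t)\|_{C^2([g(t),h(t)])}\le M_0$ for all $t\in[\tau,T_{\max})$, with $M_0$ and the domain geometry independent of $t$.

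Finally I would restart the local solver. Pick $t_0\in(\tau,T_{\max})$ and regard $(U(\cdot,t_0),V(\cdot,t_0))$ as initial data on $[g(t_0),h(t_0)]$; by the previous step its $C^2$ norm is $\le M_0$ and the half-width is bounded, so Theorem~\ref{existence} produces a solution on $[t_0,t_0+\sigma]$ where the existence time $\sigma>0$ depends only on $\alpha$, on the uniform domain size, and on $M_0$, hence not on $t_0$. Choosing $t_0$ so close to $T_{\max}$ that $T_{\max}-t_0<\sigma$, and invoking the uniqueness part of Theorem~\ref{existence} to glue the two solutions, I extend $(U,V;g,h)$ past $T_{\max}$, contradicting maximality. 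Therefore $T_{\max}=\infty$, which is the assertion.

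I expect the genuine obstacle to be the uniform boundary regularity: one must ensure the $C^{2+\alpha}$ bounds on $(m,n)$ up to the lateral boundaries $y=\pm1$ do not deteriorate as $t\uparrow T_{\max}$, which requires the transformed coefficients $A,B$ to stay in a fixed H\"older ball and, in particular, rests on $C^{\alpha/2}$ control of $h',g'$. I would secure this exactly as in the proof of Theorem~\ref{lemma-existence1} (the estimates behind \eqref{inequality-2} and \eqref{equality-8}), whose constants depend on the data only through quantities that Lemma~\ref{bounded} bounds uniformly in $T$; this is precisely what makes the restart time $\sigma$ bounded below independently of $t_0$ and closes the argument.
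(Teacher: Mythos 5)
Your proposal is correct and follows essentially the same route as the paper: a contradiction argument at the maximal existence time, the uniform bounds of Lemma~\ref{bounded} on $U$, $V$, $h'$, $g'$, parabolic regularity on the straightened strip to obtain $C^{2+\alpha,1+\alpha/2}$ control up to $T_{\max}$, and a restart of the local existence theorem with a step size $\sigma$ bounded below independently of the restart time. The only cosmetic difference is that the paper first invokes $L^p$ theory and Sobolev embedding to obtain $C^{1+\alpha,(1+\alpha)/2}$ bounds on $(m,n)$ and hence $C^{\alpha/2}$ bounds on $h'$, $g'$ before applying Schauder estimates, which is exactly the circularity you flag in your final paragraph and resolve by pointing to the estimates behind \eqref{inequality-2} and \eqref{equality-8}.
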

\begin{proof}
	Now we aim to show that the solution for system (\ref{system-1}) can extend 
	to all 
	$t\in (0,\infty)$.
	If the maximal existence interval of the solution is $[0,T_{max})$, then we 
	will 
	show $T_{max}=+\infty$. On the contrary, assuming that $T_{max}<+\infty$. 
	According to Lemma \ref{bounded}, we can get that
	$
	U(x,t)\leq  N_1,  V(x,t) \leq  N_2 $ for $(x,t)$ in 
	$[g(t), 
	h(t)]\times \left[0, T_{max}\right) .
	$
	Moreover, for the above positive constant $C$ in Lemma \ref{bounded} 
	independent on 
	$T_{max}$, it holds that $0<h^\prime(t),-g^\prime(t)\leq  C$,
	follows $h_{0} 
	\leq h(t) \leq h_{0}+ C T_{max}$ 
	and $-h_{0}- CT_{max} \leq g(t) \leq -h_{0}$ 
	for $t\in [0, T_{max})$.
	
	As in transformation
	(\ref{equality-1}), take $m(y,t)=U(x,t), n(y,t)=V(x,t)$. For any given 
	$T<T_{max},$ applying 
	the $L^{p}$ theory to 
	(\ref{system-1}), there exists a positive constant $C_{1}\left(\Ga,  N_1,  
	N_2, T_{max}\right)$ independent of $T$ such that  
	$\|m\|_{W_{p}^{2,1}\left(\Delta_{T}\right)}+\|n\|_{W_{p}^{2,1}\left(\Delta_{T}\right)}
	\leq 
	\tilde C_{1}\left(\Ga,  N_1,  N_2, T_{max}\right)$, thus, $(m,n)
	\in (W_{p}^{2,1}\left(\Delta_{T_{max}}\right))^2$ for 
	$p>\frac{3}{1-\alpha}$ and
	$$
	\|m\|_{W_{p}^{2,1}\left(\Delta_{T_{max}}\right)}
	+\|m\|_{C^{1+\alpha,\frac{1+\alpha}{2}
		}\left(\Delta_{T_{max}}\right)}+
	\|n\|_{W_{p}^{2,1}\left(\Delta_{T_{max}}\right)}+
	\|n\|_{C^{1+\alpha,\frac{1+\alpha}{2}
		}\left(\Delta_{T_{max}}\right)} \leq \tilde C_{1}\left(\Ga,  
	N_1,  N_2,
	T_{max}\right).
	$$
	In view of (\ref{equality-5}), we can get $(h,g) \in 
	(C^{1+\frac{\alpha}{2}}\left(\left[0, 
	T_{max}\right]\right))^2$ and
	\begin{equation}\label{equation-18}
		\|h\|_{C^{1+\frac{\alpha}{2}}\left(\left[0, T_{max}\right]\right)} 
		\leq 
		C_{2}\left(\Ga,  N_1,  N_2, T_{max}\right),
		\|g\|_{C^{1+\frac{\alpha}{2}}\left(\left[0, T_{max}\right]\right)}
		\leq 
		\tilde C_{2}\left(\Ga,  N_1,  N_2, T_{max}\right).
	\end{equation}
	Applying the 
	Schauder theory 
	to
	(\ref{system-1}), we can  get that $$(m,n) \in (C^{
		2+\alpha,1+\frac{\alpha}{2}}\left([-1,1]\times\left(0, 
	T_{max}\right] \right))^2,$$ and  
	it holds that
	$$
	\|m\|_{C^{2+\alpha,1+\frac{\alpha}{2} }\left(\left[-1,1]\times[\varepsilon, 
		T_{max}\right] \right)}+\|n\|_{C^{2+\alpha,1+\frac{\alpha}{2} 
		}\left(\left[-1,1]\times[\varepsilon, 
		T_{max}\right] \right)} \leq \tilde C_{3}\left(\varepsilon, \Ga, N_1, 
	N_2, 
	T_{max}\right)
	$$
	for any small $0<\varepsilon \ll T_{max}$.
	Therefore, $(U,V)\in (C^{2+\alpha,1+\frac{\alpha}{2}}\left(\left [g(t), 
	h(t)]\times (0, 
	T_{max}\right]\right))^2$ and
	\begin{equation}\label{equation-19}
		\|U\|_{C^{2+\alpha,1+\frac{\alpha}{2} 
			}\left(\left[g(t),h(t)]\times[\varepsilon, 
			T_{max}\right] \right)}+\|V\|_{C^{2+\alpha,1+\frac{\alpha}{2} 
			}\left(\left[g(t),h(t)]\times[\varepsilon, 
			T_{max}\right] \right)}\leq \tilde C_{3}\left(\varepsilon, \Ga, 
		N_1, N_2, T_{max}\right).
	\end{equation}
	Thus, the system (\ref{system-1}) admits a solution $(U,V;g,h)$ on 
	$(0,T_{max}]$. 
	Take $\{T_{n}\} \subset\left(0, T_{max}\right)$ such that $T_{n} 
	\rightarrow 
	T_{max}$ as 
	$n\rightarrow \infty$.
	Let $T_{n}$ and $\left(U\left(x,T_n\right), V\left(x,T_n\right);g(T_n),
	h\left(T_{n}\right)\right)$ be the initial state. 
	By Theorem \ref{lemma-existence1}, there is a constant 
	$t_0$ small enough  dependent on 
	$$g\left(T_{n}\right), g^{\prime}\left(T_{n}\right),h\left(T_{n}\right), 
	h^{\prime}\left(T_{n}\right), 
	\left\|U\left(\cdot, T_{n}\right)\right\|_{C^2([g(T_n), 
		h\left(T_{n}\right)])}, \left\|V\left(\cdot, 
	T_{n}\right)\right\|_{C^2([g(T_n), 
		h\left(T_{n}\right)])}$$ such that problem (\ref{system-1}) admits a 
	unique 
	solution 
	$\left(U_{n}, V_n; g_n, h_{n}\right)$ for $ t\in \left[T_{n}, 
	T_{n}+t_0\right].$ 
	Considering the 
	uniqueness of the solution for (\ref{system-1}), it follows that the 
	solution $(U,V;g, 
	h)=\left(U_{n}, 
	V_n; g_n,
	h_{n}\right)$ for $T_{n} \leq$ 
	$t<\min 
	\left\{T_{n}+t_0, T_{max}\right\},$ which implies that  
	the solution $(U,V;g, 
	h)$ 
	for 
	(\ref{system-1}) can be extended to $\left[0, T_{n}+t_0\right)$. 
	In view of 
	(\ref{equation-18}) and (\ref{equation-19}),  $t_0$ can be taken
	independent 
	of
	$n$ such that $T_{n}+t_0>T_{max}$, which is contradict to the choice of 
	$T_{max}$. Thus, this theorem
	has been proved.
\end{proof}

\section{Principal Lyapunov exponent}\label{basic}
\noindent
In order to investigate the global dynamics for model (\ref{system-1}), 
considering the spatial heterogeneity and temporal almost periodicity, we first 
introduce the principal Lyapunov exponent 
and explore
several valuable properties, which will be frequently used in later studies. 

For any given $L>0$ and the uniformly almost periodic matrix function 
$A(x,t)$ defined by (\ref{A(x,t)}), 
consider the the following equation,
\begin{equation}\label{equation-2}
	\left\{\begin{array}{ll}{I_t=D(x,D)I+A(x,t)I,} & {-L<x<L,\enspace t>0}, 
		\\ 
		{I(-L,t)=I(L,t)=0,}& {t>0},\end{array}\right.
\end{equation}
where $-D(x,D)$ is a second-order strongly elliptic differential operator 
matrix of diagonal type with 
$D(x,D)=(D_i \partial_{ii})$ for $i=1,2$.

Let $X\hookrightarrow C^2([-L,L])\times 
C^2([-L,L])$ be 
the 
fractional power space (Chapter 1, ~\cite{henry1981}) with respect to the 
sectorial 
operator 
$-D(x,D)$ with homogeneous 
Cauchy boundary 
conditions, where $\mathcal D(-D(x,D) )=\{(u,v) \in$
$(C^{2}([-L,L]))^2\mid u(\pm L)=v(\pm L)=0 \}$.
By the standard semigroup theory (\cite{pazy2012}), for any $I_0\in X$, there 
exists a unique 
solution $I(t,\cdot;I_0,A)$ of 
(\ref{equation-2}) satisfying $I(0,\cdot;I_0,A)=I_0(\cdot)$.
\begin{definition}[ DEFINITION 
	4.3, Part \rm \uppercase\expandafter{\romannumeral2},\cite{shen1998}]
	We define the principal Lyapunov exponent $\lambda(A,L)$ of 
	(\ref{equation-2}) as 
	\[\lambda(A,L)=\lim\limits_{t\rightarrow+\infty}\sup\dfrac{\rm{ln}||\Phi(A,t)||_X}
	{t},\]
	where $\Phi(A,t)$ 
	satisfies $\Phi(A,t)I_0=I(t,\cdot;I_0,A)$ for $I_0\in X$.
\end{definition}

Assume that $f_i(i=1,2)$ satisfies 
$(\textbf{\textit{H1}})-(\textbf{\textit{H4}})$,
then  
$g_i\in\textit{H}(f_i)$ satisfies 
$(\textbf{\textit{H1}})-(\textbf{\textit{H4}})$.
Applying the standard semigroup theory for parabolic equations,
for any
given $g_i\in\textit{H}(f_i)$ and $(U_0,V_0)\in X^+$, there 
exists 
a 
unique solution $(U(\cdot,t;U_0,V_0,g_1,g_2),V(\cdot,t;U_0,V_0,g_1,g_2))$ for 
the 
following equation
\begin{equation}\label{equation-3}
	\left\{\begin{array}{ll}{U_{t}=D_{1} U_{x x}+g_1(x,t,U,V),} & {-L<x<L, 
			\enspace t>0,} \\ {V_{t}=D_{2} V_{x 
				x}+g_2(x,t,U,V),} & {-L<x<L, \enspace 
			t>0,} \\ {U(x, t)=V(x, t)=0,} & {x=-L \text { or } 
			x=L,\enspace 
			t>0,} 
	\end{array}\right.
\end{equation} 
for all 
$t>0$ with 
$U(\cdot,0;U_0,V_0,g_1,g_2)=U_0(x),V(\cdot,0;U_0,V_0,g_1,g_2)=V_0(x)$, where 
$X^+=\{(u,v)\in X\mid (u,v)\geq 0\}, X^{++}={\rm Int}(X^+)$.

Further, the system (\ref{equation-3}) generates a skew-product semiflow
\begin{equation}\label{equation-22}
	\begin{aligned}
		&{\Pi_t}: 
		X^{+}\times\textit{H}(f_1)\times\textit{H}(f_2)\longrightarrow 
		X^+\times\textit{H}(f_1)\times\textit{H}(f_2),\quad t\geq 0\\
		&\qquad\qquad\enspace(U_0,V_0,g_1,g_2)\mapsto(U(\cdot,t;U_0,V_0,g_1,g_2),V(\cdot,t;U_0,V_0,g_1,g_2),
		g_1\cdot t,g_2\cdot t),
	\end{aligned}
\end{equation}
where $g_i\cdot 
t(x,\cdot,U,V)=g_i(x,\cdot+t,U,V),i=1,2$.
It can be easily seen that ${\Pi_t}$ is continuous and compact by Lemma 
\ref{bounded}.

Next we introduce the 
definition of continuous separation for skew-product semiflow.
\begin{definition} [Definition 3.11,~\cite{shen2001}]
	The skew-product semiflow (\ref{equation-22}) is said to admit a 
	{continuous separation} if there are subspaces 
	$\left\{X_{1}(G)\right\}_{G 
		\in 
		H(F)}$ and $\left\{X_{2}(G)\right\}_{G \in H(F)}$ with the following 
	properties:
	\\1) $X=X_{1}(G) \oplus X_{2}(G)(G\in H(F))$ and $X_{1}(G), X_{2}(G)$ 
	vary 
	continuously for $G \in H(F)$;
	\\2) $X_{1}(G)=\operatorname{span}\{I(G)\},$ where $I(G) \in 
	\operatorname 
	X^{++}$ and $\|I(G)\|=1$ for $G \in H(F)$;
	\\3) $X_{2}(G) \cap X^{+}=\{0\}$ for every $G \in H(F)$;
	\\4) $ \Phi(G,t) X_{1}(G)=X_{1}(G \cdot t)$ 
	and 
	$\Phi(t, G) X_{2}(G) \subset$
	$X_{2}(G \cdot t)$ for any $t>0$ and $G \in H(F)$;
	\\5) There are $K_1>0$ and $\sigma>0$ such that for any $G \in H(F)$ and $w 
	\in 
	X_{2}(G)$ with $\|w\|=1$,
	$$
	\|\Phi(G,t) w\| \leq K_1 e^{-\sigma t}\|\Phi(G,t) I(G)\|, \quad t>0.
	$$
\end{definition}
\begin{lemma} \label{mono}
	Assume that $\lambda(A(x,t),L)$ is the principal  Lyapunov 
	exponent of (\ref{equation-2}), then 
	it is monotonically  increasing of $L\in 
	(0,\infty).$
\end{lemma}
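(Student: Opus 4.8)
The plan is to exploit the cooperative (quasimonotone) structure of the linearized system together with a domain-monotonicity comparison obtained by extension by zero. First I would observe that the off-diagonal entries of $A(x,t)$, namely $a_1(x,t)N_1$ and $a_2(x,t)N_2$, are nonnegative, so \eqref{equation-2} is a cooperative linear parabolic system. Consequently its solution operator $\Phi(A,t)$ is order preserving and a comparison principle holds (the fixed-domain cooperative analogue of Lemma \ref{lemma-comparison}). Combined with the continuous separation recorded above, the principal Lyapunov exponent can be characterized as the exponential growth rate of any positive solution: for $I_0\in X^{++}$,
\[
\lambda(A,L)=\lim_{t\to\infty}\frac{\ln\|I(t,\cdot;I_0,A)\|}{t},
\]
and, by the smoothing of the parabolic semigroup, this rate is the same whether $\|\cdot\|$ denotes the fractional-power norm of $X$ or the sup norm $\|\cdot\|_{C([-L,L])}$. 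I will work with the sup norm, which behaves well under extension by zero.

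Next, fix $0<L_1<L_2$ and a positive datum $I_0^{(1)}\in X^{++}$ on $(-L_1,L_1)$, let $I^{(1)}(t,\cdot)$ be the corresponding solution, and write $\widetilde{(\cdot)}$ for its extension by zero to $[-L_2,L_2]$. The key step is to verify that $\widetilde{I^{(1)}}(t,\cdot)$ is a (weak) subsolution of \eqref{equation-2} on $(-L_2,L_2)$: it solves the system on $(-L_1,L_1)$, vanishes identically on $(-L_2,-L_1)\cup(L_1,L_2)$ and on $\partial(-L_2,L_2)$, and at the interfaces $x=\pm L_1$ the one-sided $x$-derivatives jump in the favorable direction (since $I^{(1)}>0$ inside and $=0$ on $\partial(-L_1,L_1)$, the distributional $\widetilde{I^{(1)}}_{xx}$ acquires a nonnegative Dirac mass there, so $-D(x,D)$ contributes a mass of the correct sign). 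Hence $\widetilde{I^{(1)}}_t-D(x,D)\widetilde{I^{(1)}}-A(x,t)\widetilde{I^{(1)}}\le 0$ in the distributional sense. Taking the same zero-extended function as initial datum on $(-L_2,L_2)$ and applying the comparison principle for the cooperative system yields, componentwise,
\[
I^{(2)}(t,x)\ge \widetilde{I^{(1)}}(t,x)\ge 0,\qquad x\in[-L_2,L_2],\ t>0,
\]
where $I^{(2)}$ denotes the solution on $(-L_2,L_2)$ with datum $\widetilde{I_0^{(1)}}$.

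Finally I would pass to norms. Since extension by zero preserves the sup norm, the previous inequality gives $\|I^{(2)}(t,\cdot)\|_{C([-L_2,L_2])}\ge\|\widetilde{I^{(1)}}(t,\cdot)\|_{C([-L_2,L_2])}=\|I^{(1)}(t,\cdot)\|_{C([-L_1,L_1])}$. Taking logarithms, dividing by $t$ and letting $t\to\infty$, the growth-rate characterization yields $\lambda(A,L_2)\ge\lambda(A,L_1)$, which is the asserted monotonicity; strict monotonicity, if desired, follows from the strong maximum principle applied to the interface.

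The main obstacle I expect is the reconciliation of norms: the exponent is \emph{defined} through the fractional-power norm $\|\cdot\|_X$, whereas the zero-extension comparison is natural in the sup norm. The crux is therefore to justify that the exponential growth rate of positive solutions is norm-independent, which follows from the parabolic smoothing estimates together with the continuous separation (after a fixed time lag all relevant norms of the principal bundle are comparable up to multiplicative constants, and property 5) of the continuous separation forces the complementary part to be negligible on the exponential scale). A secondary technical point is the rigorous verification that $\widetilde{I^{(1)}}$ is a genuine subsolution across $x=\pm L_1$; this is the familiar ``kink opens upward'' computation used in free-boundary arguments.
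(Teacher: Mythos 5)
Your proposal is correct and follows essentially the same route as the paper: both arguments rest on the cooperative structure of \eqref{equation-2}, the comparison principle, and the characterization of $\lambda(A,L)$ as the exponential growth rate of a positive solution supplied by the continuous separation. The only difference is one of orientation — you extend the small-domain solution by zero and show it is a subsolution on $(-L_2,L_2)$, whereas the paper restricts the large-domain solution to $[-L_1,L_1]$ (where it is a supersolution) after arranging $I_{L_2}\geq\tau I_{L_1}$ there — and these are mirror-image implementations of the same domain-monotonicity comparison.
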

\begin{proof}
	According to Lemma ${4.5}$ 
	(Part $\rm \uppercase\expandafter{\romannumeral3}$,
	~\cite{shen1998}), the skew-product semiflow 
	$\Pi_t$ generated by (\ref{equation-22})
	is strongly monotone in the sense that  
	$$(U(\cdot,t,U_0,V_0,g_1,g_2),U(\cdot,t,U_0,V_0,g_1,g_2))\in X^{++}$$ for 
	any 
	$t>0, (U_0,V_0)\in 
	X^{+},g_i\in \textit{H}(f_i)(i=1,2)$. Thus, by Theorem 4.4 of
	\cite{shen1998}, 
	the 
	skew-product semiflow (\ref{equation-22}) admits a continuous separation, 
	then 
	there exists $I_L:\textit{H}(A)\rightarrow X^{++} $ with $I_L=(U_L,V_L)$ 
	satisfying the 
	following properties:\\
	(\textit{a}) $I_L$ is continuous and $||I_L(\tilde A)||=1$ for any 
	$\tilde A\in\textit{H}(A)$;\\
	(\textit{b}) $\lambda(A,L)=\lim\limits_{t\rightarrow \infty}\frac{\ln 
		||I(\cdot,t,I_L,\tilde A)||}{t}=\lim\limits_{t\rightarrow 
		\infty}\frac{\ln 
		||\Phi(\tilde A,t)I_L(\tilde A)||}{t}$ for any $\tilde 
	A\in\textit{H}(A)$.
	
	Assume that $I(x,t,I_{L_{i}},A)$ for $i=1,2$ are the solutions for 
	(\ref{equation-3}) with $L=L_1, L_2$, respectively. Without loss of 
	generation, supposing 
	$0<L_1<L_2$, 
	then there is small 
	$\tau>0$ such that 
	$I_{L_2}\geq\tau I_{L_1}$ uniformly for $x\in [-L_1,L_1]$. According to 
	Comparison Principle, $I(x,t,I_{L_{2}},A)\geq I(x,t,\tau 
	I_{L_{1}},A)$ for $x\in [-L_1,L_1]$. In view of (\textit{a}) and 
	(\textit{b}), for any $\tilde A\in\textit{H}(A)$, it holds that 
	\begin{equation}\label{equation-23}
		\begin{aligned}
			\lambda(A,L_2)&=\lim\limits_{t\rightarrow\infty}\frac{\ln 
				||I(\cdot,t,I_{L_2},\tilde A)||}{t}
			\\&\geq\lim\limits_{t\rightarrow\infty}\frac{\ln 
				||I(\cdot,t,\tau I_{L_1},\tilde A)||}{t}
			\\&=\lim_{t\rightarrow\infty}\frac{\ln 
				||\Phi(\tilde A,t)\tau I_{L_1}(\tilde A)||}{t}
			\\&	=\lim_{t\rightarrow\infty}\frac{\ln (\tau
				||\Phi(\tilde A,t) I_{L_1}(\tilde A)||)}{t}
			\\&=\lim_{t\rightarrow\infty}\frac{\ln \tau+
				\ln||I(\cdot,t,I_{L_1},\tilde A)||}{t}
			\\&	=\lambda(A,L_1).
		\end{aligned}
	\end{equation}
	Thus, our proof is completed. 
\end{proof}

Considering that the infected domain $(g(t),h(t))$ is moving with respect to 
time  
$t$,  
we introduce the corresponding principal Lyapunov exponent 
$$\lambda(t):=\lambda(A,\frac{h(t)-g(t)}{2}), \enspace t\geq 0$$ for the 
following system
\begin{equation}\label{equation-t}
	\left\{\begin{array}{ll}{I_t=D(x,D)I+A(x,t)I,} & {g(t)<x<h(t),\enspace 
			t>0} 
		\\ 
		{I(h(t),t)=I(g(t),t)=0,}& {t>0,}\end{array}\right.
\end{equation}
where $-D(x,D)$ is a second-order strongly elliptic differential operator 
matrix of diagonal type with 
$D(x,D)=(D_i \partial_{ii})$ for $i=1,2$.	In view of the Lemmas \ref{bounded} 
and 
\ref{mono}, we can 
easily give the 
following result.
\begin{theorem}\label{ll}
	$\lambda(t)$ is monotonically increasing about $t$.
\end{theorem}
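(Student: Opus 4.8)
The plan is to reduce the statement to the two monotonicity facts already in hand, namely Lemma \ref{bounded} and Lemma \ref{mono}, and then compose them. Write $L(t):=\frac{h(t)-g(t)}{2}$ for the half-width of the infected interval, so that by definition $\lambda(t)=\lambda(A,L(t))$. The argument splits cleanly into showing that $t\mapsto L(t)$ is increasing, recalling that $L\mapsto\lambda(A,L)$ is increasing, and concluding by composition.

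First I would observe that $L(t)$ is strictly increasing. Indeed, the solution of (\ref{system-1}) satisfies $h^{\prime}(t)>0$ and $g^{\prime}(t)<0$ for all $t>0$ by Lemma \ref{bounded}, whence
\[
L^{\prime}(t)=\frac{h^{\prime}(t)-g^{\prime}(t)}{2}>0,\qquad t>0,
\]
so that $t_1<t_2$ forces $L(t_1)<L(t_2)$. Next, Lemma \ref{mono} gives that the principal Lyapunov exponent $\lambda(A,L)$ is monotonically increasing in $L\in(0,\infty)$. Composing the two monotone maps $t\mapsto L(t)$ and $L\mapsto\lambda(A,L)$ then yields, for $t_1<t_2$,
\[
\lambda(t_1)=\lambda(A,L(t_1))\le\lambda(A,L(t_2))=\lambda(t_2),
\]
which is precisely the asserted monotonicity.

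The only point that deserves a word of care — and the reason the reduction to Lemma \ref{mono} is legitimate — is that the spectral problem (\ref{equation-t}) lives on the generally non-symmetric interval $(g(t),h(t))$, whereas the exponent $\lambda(A,L)$ of Lemma \ref{mono} is defined on the symmetric interval $(-L,L)$ via (\ref{equation-2}). To justify writing $\lambda(t)=\lambda(A,L(t))$, one invokes the translation behaviour of the principal Lyapunov exponent: shifting the spatial variable by the midpoint $\frac{g(t)+h(t)}{2}$ carries (\ref{equation-t}) onto (\ref{equation-2}) with $L=L(t)$ and $A$ replaced by its spatial translate, and, the relevant translate already belonging to the hull $\textit{H}(A)$ (compare the shifted matrix $A(\cdot+\tilde x,\cdot)$ appearing in (\textbf{H5})), the exponent is unchanged under this shift. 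Hence the monotonicity in $L$ supplied by Lemma \ref{mono} transfers verbatim to the moving-boundary setting, and no further estimate is required; this is why the result follows \emph{easily} from Lemmas \ref{bounded} and \ref{mono}.
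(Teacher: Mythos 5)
Your proof is correct and follows essentially the same route as the paper, which gives no argument beyond citing Lemmas \ref{bounded} and \ref{mono} and composing them exactly as you do: $t\mapsto\frac{h(t)-g(t)}{2}$ is strictly increasing by the sign of $h^{\prime}$ and $g^{\prime}$, and $L\mapsto\lambda(A,L)$ is increasing. Your closing remark on identifying the exponent on the non-symmetric interval $(g(t),h(t))$ with that on $(-L(t),L(t))$ addresses a point the paper silently absorbs into its definition of $\lambda(t)$, and is a reasonable extra precaution rather than a deviation.
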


\section{The Long-Time Dynamics of WNv}\label{result2}
\noindent
In this section, we will discuss the long-time dynamical behaviors of the 
solution for (\ref{system-1}) and investigate the 
conditions determining 
the spreading permanently or vanishing eventually for this disease.

Firstly, we give the following definitions of vanishing and spreading for WNv.
\begin{definition}\quad
	The disease is { vanishing} if $h_{\infty}-g_{\infty}<\infty$ 
	and $$\lim\limits_{t\rightarrow 
		+\infty}||U(\cdot,t)||_{C(g(t),h(t))}=0,\lim\limits_{t\rightarrow 
		+\infty}||V(\cdot,t)||_{C(g(t),h(t))}=0;$$
	The disease is { spreading} if $h_{\infty}-g_{\infty}=\infty$ 
	and $$\lim\limits_{t\rightarrow +\infty}\inf\limits 
	||U(\cdot,t)||_{C(g(t),h(t))}>0, \lim\limits_{t\rightarrow 
		+\infty}\inf\limits||V(\cdot,t)||_{C(g(t),h(t))}>0.$$
\end{definition}

Next, for system (\ref{equation-3}), we recall a result from 
~\cite{shen2004lyapun} (Theorem 
A) which 
will be applied in proving Theorem 
\ref{dichotomy} and Theorem \ref{threshold}.
\begin{lemma}\label{propA}
	Let matrix function $A(x,t)$ be defined by 
	(\ref{A(x,t)}). For any given $g_i\in \textit{H}(f_i)$ for $i=1,2$. Let 
	$\left(U(\cdot,t;U_0,V_0,g_1,g_2), V(\cdot,t;U_0,V_0,g_1,g_2)\right)$ be 
	the solution of ($\ref{equation-3}$), then the followings
	hold.\\
	\textbf{(1)} If	$\lambda(A,L)<0$, then $$\lim\limits_{t\rightarrow 
		\infty}||U(\cdot,t;U_0,V_0,g_1,g_2)||=0, \lim\limits_{t\rightarrow 
		\infty}||V(\cdot,t;U_0,V_0,g_1,g_2)||=0$$ uniformly for 
	$g_i\in\textit{H}(f_i)$. Further, $\lim\limits_{t\rightarrow 
		\infty}||U(\cdot,s+t;U_0,V_0,s)||=0$ and $ \lim\limits_{t\rightarrow 
		\infty}||U(\cdot,s+t;U_0,V_0,s)||=0$ uniformly for $s\in \mathbb R$.\\
	\textbf{(2)} If	$\lambda(A,L)>0$, there exist $U_L:\textit{H}(f_1)\times 
	\textit{H}(f_2)\longrightarrow C([-L,L])$ and $V_L:\textit{H}(f_1)\times 
	\textit{H}(f_2)\longrightarrow C([-L,L])$ such that $U_L(g_1,g_2)$ and 
	$V_L(g_1,g_2)$ 
	are continuous for $g_i\in\textit{H}(f_i)$ and 
	$U(\cdot,t;U_L,V_L,g_1,g_2)=U_L(g_1\cdot 
	t,g_2\cdot t)(\cdot)$, $V(\cdot,t;U_L,V_L,g_1,g_2)=V_L(g_1\cdot 
	t,g_2\cdot 
	t)(\cdot)$. Meanwhile, 
	it holds that 
	\[\lim_{t\rightarrow\infty}||U(\cdot,t;U_0,V_0,g_1,g_2)-U(\cdot,t;U_L(g_1,g_2),V_L(g_1,
	g_2),g_1,g_2)||=0,\]
	\[\lim_{t\rightarrow\infty}||V(\cdot,t;U_0,V_0,g_1,g_2)-V(\cdot,t;U_L(g_1,g_2),V_L(g_1,
	g_2),g_1,g_2)||=0\]
	uniformly in $g_i\in\textit{H}(f_i)$ for any $(U_0,V_0)\in X^{+}\backslash 
	\{0\}$.
	Further, $U^*_L(x,t):=U_L(f_1\cdot t,f_2\cdot f_2)(x)$ and 
	$V^*_L(x,t):=V_L(f_1\cdot t,f_2\cdot f_2)(x)$ are uniformly almost periodic 
	in $t\in \mathbb R$. Moreover,  for any $(U_0,V_0)\in X^+\backslash 
	\{0\}$, 
	it holds 
	that
	\[\lim_{t\rightarrow\infty}||U(\cdot,s+t;U_0,V_0,s)-U^*_L(\cdot,s+t)||=0,
	\lim_{t\rightarrow\infty}||V(\cdot,s+t;U_0,V_0,s)-V^*_L(\cdot,s+t)||=0\]
	uniformly for  $s\in \mathbb R$, where 
	$U(\cdot,s+t;U_0,V_0,s)=U(\cdot,t;U_0,V_0,f_1\cdot s,f_2\cdot s), 
	V(\cdot,s+t;U_0,V_0,s)=V(\cdot,t;U_0,V_0,f_1\cdot s,f_2\cdot s)$.	
\end{lemma}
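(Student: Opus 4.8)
The plan is to treat Lemma~\ref{propA} as an instance of the general theory of monotone skew-product semiflows admitting a continuous separation, so that it reduces to Theorem~A of \cite{shen2004lyapun} once the structural hypotheses are verified in the present setting. The preliminary observation is that, on the invariant region $[0,N_1]\times[0,N_2]$ guaranteed by $(\textbf{H2})$ and Lemma~\ref{bounded}, the nonlinearities $f_i$ in \eqref{fi} are quasimonotone (cooperative): $\partial f_1/\partial V=a_1(x,t)(N_1-U)\ge 0$ and $\partial f_2/\partial U=a_2(x,t)(N_2-V)\ge 0$. Hence the skew-product semiflow $\Pi_t$ of \eqref{equation-22} is strongly monotone, compact, and --- by $(\textbf{H3})$ --- its base flow on $\textit{H}(f_1)\times\textit{H}(f_2)$ is almost periodic; together with the continuous separation already extracted in the proof of Lemma~\ref{mono}, these are exactly the hypotheses under which Theorem~A applies.

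For part \textbf{(1)} I would give a direct comparison argument rather than invoke the abstract statement. With the linearization at the trivial state written as $A(x,t)$ in \eqref{A(x,t)}, the cooperative structure yields the pointwise domination $f_1(x,t,U,V)\le -d_1(x,t)U+a_1(x,t)N_1 V$ and $f_2(x,t,U,V)\le a_2(x,t)N_2 U-d_2(x,t)V$ for $(U,V)\in X^+$, so any nonnegative solution of \eqref{equation-3} is a subsolution of the linear cooperative system \eqref{equation-2} with $\tilde A\in\textit{H}(A)$ corresponding to $(g_1,g_2)$. By the Comparison Principle, $0\le(U,V)(\cdot,t)\le\Phi(\tilde A,t)(U_0,V_0)$. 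Since $\lambda(A,L)<0$, the continuous-separation estimate forces $\|\Phi(\tilde A,t)\|_X\to 0$ exponentially, uniformly for $\tilde A\in\textit{H}(A)$, whence $\|U(\cdot,t)\|,\|V(\cdot,t)\|\to 0$ uniformly for $g_i\in\textit{H}(f_i)$. The shifted statement follows because $f_i\cdot s\in\textit{H}(f_i)$ for every $s\in\mathbb R$ and $U(\cdot,s+t;U_0,V_0,s)=U(\cdot,t;U_0,V_0,f_1\cdot s,f_2\cdot s)$; uniformity over the whole hull immediately gives uniformity in $s\in\mathbb R$.

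For part \textbf{(2)}, with $\lambda(A,L)>0$ the trivial fixed section is linearly unstable, and I would invoke the monotone-dynamics machinery of Theorem~A: the combination of strong monotonicity, the compact invariant region, the continuous separation, and the sublinearity of $f_i$ (each $f_i(x,t,\cdot,\cdot)$ is concave along rays from the origin, so the part-metric is nonincreasing) produces a \emph{unique} strongly positive entire solution realized as continuous sections $U_L,V_L:\textit{H}(f_1)\times\textit{H}(f_2)\to C([-L,L])$ with $U(\cdot,t;U_L,V_L,g_1,g_2)=U_L(g_1\cdot t,g_2\cdot t)$ and likewise for $V$, globally attracting every orbit from $X^+\setminus\{0\}$ uniformly in $(g_1,g_2)$. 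Evaluating the sections along the base orbit $t\mapsto(f_1\cdot t,f_2\cdot t)$ gives $U_L^*(x,t)=U_L(f_1\cdot t,f_2\cdot t)(x)$ and $V_L^*(x,t)=V_L(f_1\cdot t,f_2\cdot t)(x)$, whose almost periodicity in $t$ follows from the continuity of $U_L,V_L$ on the compact minimal almost periodic hull together with the almost periodicity of $s\mapsto(f_1\cdot s,f_2\cdot s)$; the uniform-in-$s$ convergence is then the same hull-uniform convergence read off at the base points $(f_1\cdot s,f_2\cdot s)$. The step I expect to be the real obstacle is part \textbf{(2)}: the instability of the origin alone gives no nontrivial attractor, and one must combine monotonicity with the concavity/sublinearity input to run the part-metric contraction that pins down a single principal section over the hull and excludes other recurrent positive states. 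Verifying that $f_1,f_2$ genuinely satisfy this concavity hypothesis (so that the abstract conclusion, including the almost periodicity of the limiting section, is legitimately applicable) is where the care is needed; the convergence and the passage to the shifted, uniform-in-$s$ formulation are then routine consequences of hull compactness.
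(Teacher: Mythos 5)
Your proposal is correct and follows essentially the same route as the paper, which in fact offers no proof of Lemma \ref{propA} at all: it simply recalls the statement as Theorem A of \cite{shen2004lyapun}, and your reduction to that theorem --- after checking cooperativity of $f_1,f_2$ on the invariant region $[0,N_1]\times[0,N_2]$, the strict subhomogeneity $f_i(x,t,\rho U,\rho V)\geq \rho f_i(x,t,U,V)$ for $\rho\in(0,1)$, and the continuous separation already extracted in the proof of Lemma \ref{mono} --- supplies exactly the hypothesis verification the paper leaves implicit. Your self-contained comparison argument for part (1), dominating the nonlinear solution by $\Phi(\tilde A,t)(U_0,V_0)$ and using the hull-uniform exponential decay when $\lambda(A,L)<0$, is sound and somewhat more explicit than the bare citation, but it does not constitute a genuinely different approach.
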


\begin{lemma}\label{lemma-inf}
	Assume that $(\textbf{H1})-(\textbf{H5})$ hold. Take $L\geq L^*$, then  
	\[\inf_{x\in[-L,L],\tilde x\in \mathbb R,\atop g_i\in 
		{H(f_i)}}U^*(x,0;\tilde x,g_1,g_2,L)>0,
	\inf_{x\in[-L,L],\tilde x\in \mathbb R,\atop g_i\in 
		{H(f_i)}}V^*(x,0;\tilde x,g_1,g_2,L)>0,\] for $i=1,2.$
	Where $(U^*(x,t;\tilde x,g_1,g_2,L),V^*(x,t;\tilde x,g_1,g_2,L))$ is the 
	unique positive 
	almost 
	periodic solution of the following system,
	\begin{equation}
		\left\{\begin{array}{ll}{U_{t}=D_{1} U_{x x}+g_1(x+\tilde x,t,U,V),} & 
			{-L<x<L, \tilde x\in\mathbb R,
				t>0,} \\ {V_{t}=D_{2} V_{x 
					x}+g_2(x+\tilde x,t,U,V),} & {-L<x<L, \tilde x\in\mathbb 
				R, 
				t>0,} \\ {U(x, t)=V(x, t)=0,} & {x=-L \enspace {\rm or } 
				\enspace
				x=L,
				t>0.} 
		\end{array}\right.
	\end{equation} 
\end{lemma}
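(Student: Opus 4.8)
The plan is a compactness-and-contradiction argument that exploits the hull structure together with hypothesis $(\textbf{H4})$; it produces a uniform positive lower bound on every compact subset of the open interval $(-L,L)$ (at the endpoints the Dirichlet condition forces $U^*=V^*=0$, so only the interior can carry content). Suppose the estimate for $U^*$ failed; then there would be a compact $K\subset(-L,L)$ and a sequence $(x_n,\tilde x_n,g_{1,n},g_{2,n})$ with $x_n\in K$, $\tilde x_n\in\mathbb R$ and $g_{i,n}\in H(f_i)$ such that $U^*(x_n,0;\tilde x_n,g_{1,n},g_{2,n},L)\to 0$. The obstruction is that $\tilde x_n$ ranges over the noncompact set $\mathbb R$, so the shifted coefficients $g_{i,n}(\cdot+\tilde x_n,\cdot,\cdot,\cdot)$ need not converge a priori; this is exactly the difficulty that $(\textbf{H4})$ is designed to remove.

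First I would extract limits. Compactness of the hulls $H(f_i)$ and of $K$ lets me pass to a subsequence with $g_{i,n}\to\bar g_i$ and $x_n\to\bar x\in K\subset(-L,L)$. Applying $(\textbf{H4})$ to $\{\tilde x_n\}$ and $\{(g_{1,n},g_{2,n})\}\subset H(F)$, a further subsequence makes $g_{i,n}(x+\tilde x_n,t,u,v)$ converge, uniformly for $t\in\mathbb R$ and $(x,u,v)$ in bounded sets, to limits $\tilde g_i$ that again satisfy $(\textbf{H1})$-$(\textbf{H4})$. Writing $\tilde A$ for the matrix attached to $\tilde g_i$ as in $(\ref{A(x,t)})$, I note that each $g_{i,n}(\cdot+\tilde x_n,\cdot)$ is a spatial translate of a hull element, so by the hull-invariance of the principal Lyapunov exponent in the almost periodic setting and by $(\textbf{H5})$ its exponent is $\ge c_0:=\inf_{\tilde x\in\mathbb R,\,L\ge L^*}\lambda(A(\cdot+\tilde x,\cdot),L)>0$; continuity of $\lambda(\cdot,L)$ in the coefficients then yields $\lambda(\tilde A,L)\ge c_0>0$.

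Since $\lambda(\tilde A,L)>0$, Lemma $\ref{propA}(2)$ furnishes a unique positive almost periodic solution $\tilde U^*$ of the limiting problem with $\tilde U^*(\cdot,0)\in X^{++}$, hence strictly positive on $(-L,L)$. By the continuous dependence of the positive almost periodic solution on the coefficients — a consequence of the continuous separation underlying Lemma $\ref{propA}$ and of the uniform convergence supplied by $(\textbf{H4})$ — one gets $U^*(x_n,0;\tilde x_n,g_{1,n},g_{2,n},L)\to\tilde U^*(\bar x,0)$, so $\tilde U^*(\bar x,0)=0$ with $\bar x\in(-L,L)$, contradicting $\tilde U^*(\cdot,0)>0$ on the interior. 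Running the identical argument on the second component gives the estimate for $V^*$.

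The step I expect to be the main obstacle is the continuous-dependence claim of the last paragraph: I must verify that the selection $(\tilde x,g_1,g_2)\mapsto U^*(\cdot,0;\tilde x,g_1,g_2,L)$ varies continuously with respect to precisely the mode of convergence produced by $(\textbf{H4})$, uniformly in the translation parameter. This rests on the persistence of the continuous separation and of the spectral gap $\lambda(\tilde A,L)\ge c_0$ under the limit; the compactness of the hulls and the monotonicity/continuity of $\lambda(\cdot,L)$ (Lemma $\ref{mono}$) are the routine supporting ingredients.
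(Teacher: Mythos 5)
Your argument is essentially correct, and it is worth noting that the paper itself supplies no proof here: it only cites Lemma 4.1 of \cite{shen2015free} and asserts that a ``minor modification'' suffices. The compactness-and-contradiction scheme you describe --- extract a minimizing sequence $(x_n,\tilde x_n,g_{1,n},g_{2,n})$, use $(\textbf{H4})$ to pass to a limit of the translated coefficients, transfer the spectral bound $\lambda\ge c_0>0$ from $(\textbf{H5})$ to the limit problem, and contradict the interior positivity of the limiting almost periodic solution via continuous dependence --- is precisely the mechanism of that cited lemma, so you have in effect reconstructed the omitted proof rather than found a new route. Two remarks. First, you are right that the statement as printed, with the infimum taken over all of $x\in[-L,L]$, is vacuously false because of the Dirichlet condition at $x=\pm L$; the version you prove (uniform positivity on compact subsets of $(-L,L)$, uniformly in $\tilde x$ and in the hull) is the one actually used later, e.g.\ to derive $(\ref{equation-21})$ and $(\ref{ii})$, where only values at the centre of translated intervals matter. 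Second, the one step you should make explicit is that Lemma \ref{propA} and the continuous separation are stated for coefficients in $H(f_1)\times H(f_2)$, whereas your sequence consists of \emph{spatial translates} $g_{i,n}(\cdot+\tilde x_n,\cdot,\cdot,\cdot)$, which need not lie in that hull; you must enlarge the base space to the closure of all space--time translates of $F$ under the topology furnished by $(\textbf{H4})$, check that $(\textbf{H1})$--$(\textbf{H4})$ and the bound $\lambda(\tilde A,L)\ge c_0$ persist on this enlarged hull, and then invoke the continuity of the selection $(g_1,g_2)\mapsto (U_L(g_1,g_2),V_L(g_1,g_2))$ there. With that adjustment the continuous-dependence step you flag as the main obstacle goes through by the standard exponential-separation argument, and the proof is complete.
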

Indeed, we can see that \[(U^*(x,t;\tilde x,g_1,g_2,L),V^*(x,t;\tilde 
x,g_1,g_2,L))=
(U^*(x,0;\tilde x,g_1\cdot t,g_2\cdot t,L),V^*(x,0;\tilde x,g_1\cdot t 
,g_2\cdot t,L)).\]
\begin{proof}
	The proof of this lemma can refer to Lemma 4.1 in \cite{shen2015free}, it 
	can be proved by making a minor modification, so we omit the detailed 
	proof. 
\end{proof}

Considering the dependence of boundary functions $g(t)$ and $h(t)$ on $\mu$, 
denote 
$$h_{\mu}(t):=h(t)=h(t;U_0,V_0,h_0) \enspace {\rm and}\enspace 
g_{\mu}(t):=g(t)=g(t;U_0,V_0,h_0)$$  
with $h(0)=h_0,g(0)=-h_0$. Then the following result holds.
\begin{lemma}\label{hmu(t)}
	For all $t>0$, $h_{\mu}(t)$ is strictly monotonically increasing 
	in $\mu$, and $g_{\mu}(t)$ is strictly monotonically decreasing in $\mu$.
\end{lemma}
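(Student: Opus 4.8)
The plan is to derive the monotone dependence from the Comparison Principle (Lemma~\ref{lemma-comparison}), exploiting the fact that the reaction terms in \eqref{system-1} do not involve $\mu$, so that the entire $\mu$-dependence is concentrated in the Stefan conditions on the free boundaries. Fix $0<\mu_1<\mu_2$ and write $(U_i,V_i;g_i,h_i)$ for the global solution of \eqref{system-1} with $\mu=\mu_i$, $i=1,2$, which exists by Theorem~\ref{lemma-existence2}. First I would argue that $(U_2,V_2;g_2,h_2)$ is an upper solution of the problem \eqref{system-1} associated with $\mu_1$. The two interior equations for $U_2,V_2$ hold with equality and are independent of $\mu$, while Lemma~\ref{bounded} gives $0<U_2\le N_1$ and $0<V_2\le N_2$, so the only thing left to check is the pair of boundary inequalities required in \eqref{2.4}.

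For these boundary inequalities, recall that $U_2>0$ inside $(g_2(t),h_2(t))$ with $U_2=0$ on the boundary, so Hopf's lemma yields $U_{2,x}(h_2(t),t)<0$ and $U_{2,x}(g_2(t),t)>0$ for $t>0$. Hence, using $h_2'(t)=-\mu_2U_{2,x}(h_2(t),t)$ and $\mu_2>\mu_1$,
\[
h_2'(t)=\mu_2\bigl(-U_{2,x}(h_2(t),t)\bigr)\ge \mu_1\bigl(-U_{2,x}(h_2(t),t)\bigr)=-\mu_1U_{2,x}(h_2(t),t),
\]
and similarly $g_2'(t)=-\mu_2U_{2,x}(g_2(t),t)\le -\mu_1U_{2,x}(g_2(t),t)$ because $U_{2,x}(g_2(t),t)>0$. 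Since the initial and lateral boundary data for the two problems coincide, $(U_2,V_2;g_2,h_2)$ satisfies all the hypotheses of Lemma~\ref{lemma-comparison} with $\mu=\mu_1$. Applying that lemma gives $h_1(t)\le h_2(t)$, $g_2(t)\le g_1(t)$, and $U_1\le U_2$, $V_1\le V_2$ on the common region for every $t>0$; this is the non-strict monotonicity.

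To upgrade to strict inequality I would run a first-touching-time argument driven by the strong maximum principle and Hopf's lemma. At $t=0^+$ one has $U_{1,x}(h_0,0)=U_{2,x}(h_0,0)=U_0'(h_0)<0$, whence $h_2'(0)=-\mu_2U_0'(h_0)>-\mu_1U_0'(h_0)=h_1'(0)$, so $h_1(t)<h_2(t)$ for small $t>0$. Suppose $t_0:=\sup\{t>0:\ h_1(s)<h_2(s)\ \text{for all}\ s\in(0,t)\}<\infty$; then $h_1(t_0)=h_2(t_0)=:\xi$, and since $h_2-h_1\ge0$ attains the value $0$ at $t_0$ from positive values, $h_2'(t_0)\le h_1'(t_0)$. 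On the common domain the difference $z:=U_2-U_1\ge0$ satisfies a linear parabolic inequality of the form $z_t-D_1z_{xx}+c(x,t)z=a_1(N_1-U_2)(V_2-V_1)\ge0$ with bounded $c=a_1V_1+d_1$, which comes from the cooperative structure $\partial f_1/\partial V\ge0$, $\partial f_2/\partial U\ge0$. For $t<t_0$ the point $h_1(t)$ is interior to the support of $U_2$, so $U_2(h_1(t),t)>0=U_1(h_1(t),t)$ forces $z\not\equiv0$, and the strong maximum principle gives $z>0$ in the interior. Hopf's lemma applied to $z$ at $(\xi,t_0)$ then yields $U_{2,x}(\xi,t_0)<U_{1,x}(\xi,t_0)<0$, which together with $\mu_2>\mu_1$ gives $h_2'(t_0)>h_1'(t_0)$, contradicting $h_2'(t_0)\le h_1'(t_0)$. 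Hence $h_1(t)<h_2(t)$ for all $t>0$, and the strictly decreasing dependence of $g_\mu$ follows by the symmetric argument at the left boundary.

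The main obstacle I anticipate is this last strictness step. Verifying $z\not\equiv0$ on the common domain and justifying Hopf's lemma at a point lying on the moving free boundary (rather than in the interior) requires checking the parabolic boundary-point geometry carefully, and the coupling between $U$ and $V$ means the maximum-principle argument must respect the cooperative (quasimonotone) structure of the system rather than reducing to a single scalar equation.
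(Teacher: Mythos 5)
Your proposal is correct and follows essentially the same route as the paper: a comparison-principle argument (Lemma~\ref{lemma-comparison}) gives the non-strict ordering $h_{\mu_1}\le h_{\mu_2}$, and the strictness is then obtained by a first-touching-time contradiction using the strong maximum principle and Hopf's lemma applied to the difference $U_2-U_1$ at the coincidence point of the two free boundaries. The only cosmetic difference is that you verify the upper-solution property of the $\mu_2$-solution for the $\mu_1$-problem, whereas the paper checks the lower-solution property in the opposite direction; your treatment of the linear equation for $z=U_2-U_1$ and the cooperative structure is slightly more explicit than the paper's, but the argument is the same.
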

\begin{proof}
	We will prove this lemma mainly by Comparison Principle.	Assume that 
	$(U_1, V_1; 
	g_{{\mu}_1}, h_{{\mu}_1})$ and $(U_2, V_2; 
	g_{{\mu}_2}, h_{{\mu}_2})$ are the two 
	solutions 
	for problem $(\ref{system-1})$.
	For simplification, we only need to compare  $h_{\mu_1}(t)$ with 
	$h_{\mu_2}(t)$, 
	then we can similarly obtain the strict monotonicity of $g_\mu(t)$.

	Without loss of generality, assume that $0<\mu_1<\mu_2$, then 
	\begin{equation}
		h^{\prime}_{\mu_1}(t)=-\mu_1 {U_1}_x(h_{\mu_1}(t),t)<-\mu_2 
		{U_1}_x(h_{\mu_1}(t),t).
	\end{equation}
	By Lemma \ref{lemma-comparison}, it follows 
	$h_{\mu_1}(t)\leq h_{\mu_2}(t)$ for all $t\in[0,\infty)$.
	
	Now it is our turn to prove that $h_{\mu_1}(t)<h_{\mu_2}(t)$ in 
	$[0,\infty)$.
	On the contrary, assume that positive time $T^*$ is the first time such 
	that 
	$h_{{\mu}_1}(t)
	<h_{{\mu}_2}(t)$ for $t\in(0,T^*)$ and $h_{{\mu}_1}(T^*)
	=h_{{\mu}_2}(T^*)$, then 
	\begin{equation}\label{hh}
		h^{\prime}_{{\mu}_1}(T^*)
		\geq h^{\prime}_{{\mu}_2}(T^*).
	\end{equation}
	Let 
	$\Sigma_{T^{*}}:=\left\{(x,t) \in \mathbb{R}^2\mid 0 \leq 
	x<h_{\mu_{1}}(t), 0<t \leq T^{*}\right\}.
	$
	Applying the strong maximum principle to $U_1$ and $U_2$, it follows that
	$U_{1}( x,t)<U_{2}(x,t)$ 
	in 
	$ 
	\Sigma_{T^{*}}$. Let $H(x,t)=U_{2}(x,t)-U_{1}(x,t),$ then 
	$H(x,t)>0 
	\enspace{\rm 
		for}\enspace (x,t)\in 
	\Sigma_{T^{*}}$ and $H\left(h_{\mu_{1}}\left(T^{*}\right),T^{*}\right)=0 .$ 
	Then, we can get that 
	$H_{x}\left(h_{\mu_{1}}\left(T^{*}\right),T^{*}\right)<$
	$0$.  In view of $\left(U_{i}\right)_{x}\left( 
	h_{\mu_{1}}\left(T^{*}\right),T^{*}\right)<0$ and $\mu_{1}<\mu_{2},$ then
	$$
	-\mu_{1}\left(U_{1}\right)_{x}\left(
	h_{\mu_{1}}\left(T^{*} 
	\right),T^{*}\right)<-\mu_{2}\left(U_{2}\right)_{x}\left( 
	h_{\mu_{2}}\left(T^{*}\right),T^{*}\right).
	$$
	Therefore,
	$h_{\mu_{1}}^{\prime}\left(T^{*}\right)<h_{\mu_{2}}^{\prime}\left(T^{*}\right)
	,$ which yields a contradiction to (\ref{hh}). Thus,  
	$h_{\mu}(t)$ is strictly monotonically increasing about $\mu$ for all $t>0$.
	
	Similarly, we can get that $-g_{\mu_{1}}(t)<-g_{\mu_{2}}(t)$ for all $t>0$. 
	Therefore, our proof is completed.
\end{proof}

In the rest of this section, for any given $(U_0,V_0)$ satisfying 
(\ref{system-2}), let  $$(U(x,t;U_0,V_0,h_0),
V(x,t;U_0,V_0,h_0))$$ denote the solution of system $(\ref{system-1})$ with 
$$U(x,0;U_0,V_0,h_0)=U_0, V(x,0;U_0,V_0,h_0)=V_0,h(0)=h_0,g(0)=-h_0.$$ 
\begin{theorem}\label{hg}
	If $h_\infty-g_\infty<\infty$, then $\lim\limits_{t\rightarrow 
		\infty}h^{\prime}(t,U_0,V_0,h_0)=0, \lim\limits_{t\rightarrow 
		\infty}g^{\prime}(t,U_0,V_0,h_0)=0$.
\end{theorem}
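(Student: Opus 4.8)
The plan is to show that both boundary speeds are integrable over $[0,\infty)$ and simultaneously uniformly H\"older continuous in $t$; these two facts together force the speeds to vanish as $t\to\infty$. First I would record the monotonicity already established in Section \ref{result1}: since $h^\prime(t)>0$ and $g^\prime(t)<0$, the limits $h_\infty$ and $g_\infty$ exist, and the hypothesis $h_\infty-g_\infty<\infty$ guarantees both are finite. Integrating the boundary conditions gives
\[
\int_0^\infty h^\prime(t)\,dt=h_\infty-h_0<\infty,\qquad
\int_0^\infty\bigl(-g^\prime(t)\bigr)\,dt=-h_0-g_\infty<\infty,
\]
so $h^\prime$ and $-g^\prime$ are nonnegative and integrable on $[0,\infty)$. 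Note also that, because $h$ increases and $g$ decreases, the width obeys $2h_0\le h(t)-g(t)\le h_\infty-g_\infty$ for all $t$, i.e. the moving domain is bounded and nondegenerate uniformly in time.

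The heart of the argument is a uniform-in-$t$ regularity estimate for the boundary speed. I would straighten the domain by the change of variables (\ref{equality-1})--(\ref{equality-101}), turning (\ref{system-1}) into the fixed-domain problem (\ref{system-4})--(\ref{system-5}) on $[-1,1]$. Because the width $h(t)-g(t)$ stays in $[2h_0,\,h_\infty-g_\infty]$, the coefficients $A$ and $B$ of the transformed operator are bounded and uniformly parabolic, with bounds independent of $t$; moreover Lemma \ref{bounded} supplies the $t$-independent bounds $0<h^\prime\le C$, $-C\le g^\prime<0$, and $0<U\le N_1$, $0<V\le N_2$. Applying interior-in-time $L^p$ parabolic estimates on the strips $[-1,1]\times[t,t+2]$, followed by the Sobolev embedding and Schauder theory, I obtain a constant $\tilde C$, independent of $t$, with
\[
\|m\|_{C^{1+\alpha,\frac{1+\alpha}{2}}([-1,1]\times[t+1,t+2])}
+\|n\|_{C^{1+\alpha,\frac{1+\alpha}{2}}([-1,1]\times[t+1,t+2])}\le\tilde C
\]
for every $t\ge 0$. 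In particular $m_y(\pm1,\cdot)$ is bounded in $C^{\alpha/2}$ on each unit time interval uniformly in $t$, so that $h^\prime(t)=-\mu\frac{2}{h(t)-g(t)}m_y(1,t)$ and $g^\prime(t)=-\mu\frac{2}{h(t)-g(t)}m_y(-1,t)$ are uniformly H\"older continuous, hence uniformly continuous, on $[1,\infty)$.

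It remains to combine integrability with uniform continuity. Suppose, for contradiction, that $h^\prime(t)\not\to0$; then there exist $\varepsilon>0$ and $t_n\to\infty$ with $h^\prime(t_n)\ge\varepsilon$. By the uniform H\"older bound there is a $\delta>0$, independent of $n$, such that $h^\prime(t)\ge\varepsilon/2$ for all $t\in[t_n,t_n+\delta]$; passing to a subsequence I may assume the intervals $[t_n,t_n+\delta]$ are pairwise disjoint. Then $\sum_n\int_{t_n}^{t_n+\delta}h^\prime(t)\,dt\ge\sum_n\varepsilon\delta/2=\infty$, contradicting $\int_0^\infty h^\prime(t)\,dt<\infty$. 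Hence $h^\prime(t)\to0$, and the identical argument applied to $-g^\prime$ yields $g^\prime(t)\to0$.

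I expect the main obstacle to be the uniform-in-$t$ parabolic estimate: the existence argument in Theorem \ref{lemma-existence2} produces constants depending on $T_{max}$, whereas here I must instead exploit the hypothesis $h_\infty-g_\infty<\infty$ to keep the straightened coefficients uniformly parabolic, so that the Schauder and $L^p$ constants can be taken independent of $t$. Once that uniform modulus of continuity for $h^\prime$ and $g^\prime$ is secured, the integrability-plus-uniform-continuity conclusion is routine.
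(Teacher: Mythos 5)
Your proof is correct, and it reaches the conclusion by a slightly different route than the paper. The paper argues by contradiction: it picks $t_n\to\infty$ with $h^\prime(t_n)$ bounded away from zero, forms the time-translates $h_n(t)=h(t+t_n)$, invokes Arzel\`a--Ascoli to extract a locally uniform limit $h^*$ of $h_n^\prime$, and concludes $h^*\equiv 0$ because $h_n\to h_\infty$ (a constant); you instead combine $\int_0^\infty h^\prime\,dt=h_\infty-h_0<\infty$ with a uniform modulus of continuity for $h^\prime$ in a Barbalat-type argument. These are two packagings of the same underlying mechanism, and both hinge on the identical technical ingredient: a uniform-in-time H\"older (or at least equicontinuity) bound for $h^\prime$ and $g^\prime$. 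Here your write-up is actually more careful than the paper's: the paper asserts that $\{h_n^\prime\}$ is ``uniformly bounded and equicontinuous'' citing only Lemma \ref{bounded}, which supplies boundedness of $h^\prime$ but says nothing about its modulus of continuity; the equicontinuity really does require the interior-in-time $L^p$/Schauder estimates on unit strips that you spell out, using that the straightened domain has width in $[2h_0,\,h_\infty-g_\infty]$ so the transformed coefficients are uniformly parabolic with $t$-independent bounds. What the paper's route buys is brevity and no need to invoke integrability explicitly; what yours buys is an explicit, self-contained justification of the uniform regularity step that the paper leaves implicit.
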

\begin{proof}
	Now we are going to prove the case of 
	$h^{\prime}(t,U_0,V_0,h_0)$.
	On the contrary,
	assume that there exists a positive sequence $\{t_n\}$ with
	$\lim\limits_{n\rightarrow\infty}t_n= \infty 
	$ 
	such 
	that 
	\begin{equation}\label{h'}
		\lim\limits_{n\rightarrow \infty} h^{\prime}(t_n,U_0,V_0,h_0)>0.
	\end{equation}
	Let \[h_n(t)=h(t+t_n,U_0,V_0,h_0),\enspace {\rm for} \enspace t\geq 0,\] 
	then 
	$\lim\limits 
	_{n\rightarrow \infty}h_n(t)=h_\infty$ uniformly for $t\geq 0$. According 
	to 
	Lemma 
	\ref{bounded}, we can get that $\{h^{\prime}_n(t)\}$ is uniformly 
	bounded and equicontinuous on $[0,\infty)$. By {Arzela-Ascoli theorem}, 
	there exists $h^{*}(t)$ such that 
	$\lim\limits_{n\rightarrow\infty}h^{\prime}_n(t)=h^*(t)$ uniformly in any 
	bounded 
	sets of $[0,\infty)$. Since $\lim\limits 
	_{n\rightarrow \infty}h_n(t)=h_\infty<\infty,$ then $h^*(t)\equiv 0, $ 
	which 
	implies that 
	$\lim\limits_{n\rightarrow\infty}h^{\prime}(t_n,U_0,V_0,h_0)=0$. 
	It is a 
	contradiction to (\ref{h'}).
	Similarly, $\lim\limits_{t\rightarrow 
		\infty}g^{\prime}(t,U_0,V_0,h_0)=0$.
\end{proof}
\begin{theorem}\label{tm5}
	Assume that  
	$(\textit{\textbf{H1}})-(\textit{\textbf{H5}})$ hold. If 
	$h_\infty-g_\infty<\infty$, then 
	$$\lim\limits_{t\rightarrow 
		+\infty}U(x,t;U_0,V_0,h_0)=0 \enspace {\rm and} \enspace 
	\lim\limits_{t\rightarrow 
		+\infty}V(x,t;U_0,V_0,h_0)=0$$ uniformly in $x\in [g_\infty,h_\infty]$. 
	That is, the disease will vanish.
\end{theorem}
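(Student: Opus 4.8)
The plan is to argue by contradiction, turning the finiteness of the infected domain into an over-determined limiting boundary condition that only the trivial solution can satisfy. Fix the solution $(U,V;g,h)$ with the prescribed data. By Lemma \ref{bounded} we have $0<U\le N_1$, $0<V\le N_2$ and $0<h'(t)\le C$, $-C\le g'(t)<0$ uniformly in $t$, while the regularity estimates of Section \ref{result1} give uniform (in $t$) bounds for $U,V$ in $C^{2+\alpha,1+\alpha/2}$ on the straightened domain. Since $h_\infty-g_\infty<\infty$ by hypothesis, Theorem \ref{hg} yields $h'(t)\to0$ and $g'(t)\to0$ as $t\to\infty$; because $h'(t)=-\mu U_x(h(t),t)$ and $g'(t)=-\mu U_x(g(t),t)$, this says precisely that the boundary fluxes $U_x(h(t),t)$ and $U_x(g(t),t)$ tend to $0$.

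First I would prove that $\|U(\cdot,t)\|_{C([g(t),h(t)])}\to0$. Suppose not: there are $\varepsilon>0$, $t_n\to\infty$ and $x_n\in(g(t_n),h(t_n))$ with $U(x_n,t_n)\ge\varepsilon$. Straighten the moving interval by the change of variables (\ref{equality-1}), writing $m(y,t)=U(x,t)$, $n(y,t)=V(x,t)$ on $y\in[-1,1]$, so that $(m,n)$ solves the transformed system (\ref{system-4}) with leading coefficient $A=4/(h-g)^2$ and drift $B$ determined by $h,g,h',g'$. Consider the time-translates $m_n(y,t):=m(y,t+t_n)$, $n_n(y,t):=n(y,t+t_n)$. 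Thanks to the uniform parabolic estimates above, along a subsequence $(m_n,n_n)\to(\tilde m,\tilde n)$ in $C^{2,1}_{loc}$ up to $y=\pm1$; by Theorem \ref{hg} the coefficients converge, $A\to4/(h_\infty-g_\infty)^2$ and $B\to0$, and by $(\textbf{H3})$--$(\textbf{H4})$ the reaction terms converge to some $(\tilde f_1,\tilde f_2)$ with $\tilde f_i\in H(f_i)$, so $(\tilde m,\tilde n)$ is an entire-in-time solution of the corresponding fixed-domain limit problem on $(-1,1)$. The limit now carries two conditions at each endpoint: $\tilde m(\pm1,t)=0$ from the Dirichlet data, and $\tilde m_y(\pm1,t)=0$ because $U_x(h(t),t),U_x(g(t),t)\to0$ and the convergence holds in $C^1$ up to the boundary. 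Passing $x_n\to x_*$, the corresponding $y_n\to y_*\in(-1,1)$ with $\tilde m(y_*,0)\ge\varepsilon$, so $\tilde m\not\equiv0$. Since $0\le\tilde m\le N_1$, $0\le\tilde n\le N_2$, one has $\tilde f_1\ge-\tilde d_1\tilde m$, so $\tilde m$ is a nonnegative supersolution of a linear parabolic operator; the strong maximum principle gives $\tilde m>0$ in $(-1,1)$, and the Hopf boundary lemma at $y=1$ forces $\tilde m_y(1,t)<0$, contradicting $\tilde m_y(1,t)=0$. Hence $\|U(\cdot,t)\|\to0$.

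With $U\to0$ in hand, the vanishing of $V$ is a soft consequence. Given $\delta>0$, choose $T_\delta$ so that $U\le\delta$ on $[g(t),h(t)]$ for $t\ge T_\delta$; then there, $V_t-D_2V_{xx}+d_2(x,t)V=a_2(x,t)(N_2-V)U\le C\delta$, where $C$ bounds $a_2N_2$. Comparing, by a standard comparison argument, with the spatially constant supersolution $w(t)$ solving $w'+\underline d\,w=C\delta$, $w(T_\delta)=\|V(\cdot,T_\delta)\|_\infty$ (with $\underline d=\inf d_2>0$), gives $\limsup_{t\to\infty}\|V(\cdot,t)\|\le C\delta/\underline d$; letting $\delta\to0$ yields $\|V(\cdot,t)\|\to0$. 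Both limits hold uniformly in $x\in[g_\infty,h_\infty]$ once $U,V$ are extended by $0$ outside $(g(t),h(t))$, which is the asserted vanishing.

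The hard part will be the compactness step underlying the limit $(\tilde m,\tilde n)$: one must upgrade the convergence of the translates to $C^1$ up to the free boundary so that the relation $\tilde m_y(\pm1,t)=0$ genuinely passes to the limit, and one must correctly identify the limiting reaction inside $H(f_1)\times H(f_2)$. Both rely on the uniform-in-$n$ Schauder bounds from Section \ref{result1} together with $A\to\mathrm{const}$, $B\to0$ from Theorem \ref{hg} and the hull compactness encoded in $(\textbf{H3})$--$(\textbf{H4})$; once the limiting problem is pinned down, the over-determination plus Hopf lemma closes the argument at once.
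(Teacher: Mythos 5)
Your proof is correct, and its core mechanism --- pass to a limit along time-translates, observe that the vanishing of $h'(t)=-\mu U_x(h(t),t)$ and $g'(t)=-\mu U_x(g(t),t)$ (Theorem \ref{hg}) forces an extra Neumann condition on the limiting entire solution, then contradict the Hopf boundary lemma --- is exactly the argument the paper deploys, but only in the borderline case $h_\infty-g_\infty=2L^*$ of its Step 2. The paper's route is otherwise different: it first proves $h_\infty-g_\infty\le 2L^*$ (Step 1, via the principal Lyapunov exponent, the positive almost periodic solution of Lemma \ref{propA}, and the same Hopf-versus-$h'\to0$ contradiction), and then in the case $h_\infty-g_\infty<2L^*$ it deduces decay from $\lambda(A,\tfrac{h_\infty-g_\infty}{2})<0$ and Lemma \ref{propA}(1) by comparison with the fixed-domain problem on $(g_\infty,h_\infty)$. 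Your version applies the Hopf argument uniformly and so dispenses with the Lyapunov exponent, the case analysis, and in fact with hypothesis $(\textbf{H5})$ altogether for this theorem; your separate treatment of $V$ by an ODE supersolution once $U$ is small is also more elementary than the paper's (which gets both components at once from Lemma \ref{propA}, and in the borderline case handles $V$ only implicitly). What your route does \emph{not} deliver is the quantitative conclusion $h_\infty-g_\infty\le 2L^*$, which the paper's Step 1 establishes and which is needed for the vanishing alternative of Theorem \ref{dichotomy}; so your proof covers the literal statement of Theorem \ref{tm5} but could not simply replace the paper's proof in the logical chain. Two details you correctly flag but should make sure to close: the Schauder bounds of Section \ref{result1} carry constants depending on the final time, so the uniform-in-$t$ $C^{2+\alpha,1+\alpha/2}$ control needed for $C^1$-up-to-the-boundary convergence of the translates must be obtained by rerunning the local estimates on unit time windows (using $0<U\le N_1$, $0<V\le N_2$, $2h_0\le h-g\le h_\infty-g_\infty<\infty$ and the bounds on $h',g'$ from Lemma \ref{bounded}); and the claim $y_*\in(-1,1)$ needs the one-line observation that the uniform gradient bound together with $m_n(\pm1,0)=0$ and $m_n(y_n,0)\ge\varepsilon$ keeps $y_n$ a fixed distance from $\pm1$.
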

\begin{proof}
	Let $f_i$ be defined as in (\ref{fi}) for $i=1,2$.
	then $f_1$ and $f_2$ satisfy 
	$(\textit{\textbf{H1}})-(\textit{\textbf{H4}})$ and $A(x,t)$ defined by 
	(\ref{A(x,t)}) satisfies $(\textit{\textbf{H5}})$. 
	If 
	$h_\infty-g_\infty<\infty$, it is easily to show that $h_\infty<\infty$ 
	and 
	$g_\infty>-\infty$. 
	
	According to regularity and the priori estimates about parabolic 
	equations (\cite{henry1981}), considering 
	the system $(\ref{system-1})$,  for any given sequence $\{t_n\}$ satisfying 
	$ 
	t_{n}\rightarrow \infty$ as $n\rightarrow \infty$, there exists a 
	subsequence 
	$\{t_{n_k}\}$ satisfying $ t_{n_k}\rightarrow \infty$ as $k\rightarrow 
	\infty$,
	$(\hat U^*(x,t), \hat V^*(x,t))\in (C([g_\infty,h_\infty]\times \mathbb 
	R))^2$ and 
	$g^*_i\in 
	\textit{H}(f_i)$ for $i=1,2$ such that $f_i\cdot t_{n_k}\rightarrow g^*_i,$
	\begin{equation}\label{equation-16}
		\lim\limits_{{k}\rightarrow 
			\infty}||U(\cdot 
		,t+t_{n_k};U_0,V_0,h_0)-\hat
		U^*(\cdot,t)||_{C^1([g(t+t_{n_k},h(t+t_{n_k})])}
		=0
	\end{equation}
	and
	\begin{equation}\label{equation-17}
		\lim\limits_{{k}\rightarrow\infty}||V(\cdot 
		,t+t_{n_k};U_0,V_0,h_0)-\hat V^*(\cdot,t)||
		_{C^1([g(t+t_{n_k},h(t+t_{n_k})])}=0,
	\end{equation}
	where $(\hat U^*(x,t), \hat V^*(x,t))$ is the entire solution for the 
	following 
	system,
	\begin{equation}\label{system-25}
		\left\{\begin{array}{ll}{U_{t}=D_{1} U_{x x}+g^*_1(x,t,U,V),} & 
			{g_\infty<x<h_\infty, 
			} \\ {V_{t}=D_{2} V_{x 
					x}+g^*_2(x,t,U,V),} & {g_\infty<x<h_\infty, 
			} \\ {U(x, t)=V(x, t)=0,} & {x=g_\infty \text { or } 
				x=h_\infty.} 
		\end{array}\right.
	\end{equation}
	Next we 
	accomplish the proof of this theorem by
	two steps. 
	
	\textbf{\textbf{Step 1}} To show $h_\infty-g_\infty\leq 2L^*$ 
	following 
	from 
	$h_\infty-g_\infty<\infty$.
	
	On the contrary, assume that $h_\infty-g_\infty\in (2L^*,\infty) $, then 
	there 
	exist 
	$t^*>0$ and $\epsilon>0$ such that 
	$h(t)-g(t)>h_\infty-g_\infty-2\epsilon>2L^*$ 
	for 
	$t\geq t^*$, thus, by (\textit{\textbf{H5}}) and Theorem \ref{ll},
	$\lambda(t)>0$.
	For the following system
	\begin{equation}\label{system-9}
		\left\{\begin{array}{ll}{U_{t}=D_{1} U_{x x}+f_1(x,t,U,V),} & 
			{g_\infty+\epsilon<x<h_\infty-\epsilon, 
				\enspace t>0,} \\ {V_{t}=D_{2} V_{x 
					x}+f_2(x,t,U,V),} & 
			{g_\infty+\epsilon<x<h_\infty-\epsilon, 
				\enspace 
				t>0,} \\ {U(x, t)=V(x, t)=0,} & {x=g_\infty+\epsilon 
				\text { or } 
				x=h_\infty-\epsilon,\enspace 
				t>0,} 
		\end{array}\right.
	\end{equation}
	by Comparison Principle, we can get that 
	\[U(\cdot,t+t^*;U_0,U_0,h_0)\geq \tilde 
	U(\cdot,t+t^*;U(\cdot,t^*;U_0,U_0,h_0),V(\cdot,t^*;U_0,U_0,h_0),t^*)\]and  
	\[U(\cdot,t+t^*;U_0,U_0,h_0)\geq \tilde 
	V(\cdot,t+t^*;U(\cdot,t^*;U_0,U_0,h_0),V(\cdot,t^*;U_0,U_0,h_0),t^*).\]
	Where $$(\tilde 
	U(\cdot,t+t^*;U(\cdot,t^*;U_0,U_0,h_0),V(\cdot,t^*;U_0,U_0,h_0),t^*), 
	\tilde 
	V(\cdot,t+t^*;U(\cdot,t^*;U_0,U_0,h_0),V(\cdot,t^*;U_0,U_0,h_0),t^*))$$ is 
	the 
	solution of $(\ref{system-9})$ with $$\tilde 
	U(\cdot,t^*;U(\cdot,t^*;U_0,U_0,h_0),V(\cdot,t^*;U_0,U_0,h_0),t^*)=
	U(\cdot,t^*;U_0,U_0,h_0),$$ $$\tilde 
	V(\cdot,t^*;U(\cdot,t^*;U_0,U_0,h_0),V(\cdot,t^*;U_0,U_0,h_0),t^*))=
	V(\cdot,t^*;U_0,U_0,h_0).$$ In  view of  
	Lemma \ref{propA}, the system $(\ref{system-9})$ admits a positive 
	almost time periodic solution $(U_{\epsilon}(x,t), 
	V_{\epsilon}(x,t))$. Moreover, for any $(U_0,V_0)\in X^{++}$, it holds that
	\begin{equation}\label{equality-2}
		\lim_{t\rightarrow \infty}||\tilde 
		U(\cdot,t+t^*;U_0,V_0,t^*)-U_{\epsilon}(\cdot,t+t^*)||=0
	\end{equation}
	and
	\begin{equation}\label{equality-3}
		\lim_{t\rightarrow \infty}||\tilde
		V(\cdot,t+t^*;U_0,V_0,t^*)-V_{\epsilon}(\cdot,t+t^*)||=0.
	\end{equation}
	By Comparison Principle, combining $(\ref{equality-2})$ and 
	$(\ref{equality-3})$, we get 
	\[\hat U^*(x,t)>0, \hat V^*(x,t)>0,  x\in (g_\infty,h_\infty),t\in \mathbb 
	R,\]
	which implies $\hat U^*_x(h_\infty,t)<0, \hat V^*_x(h_\infty,t)<0$.
	Therefore, 
	\[\lim_{t\rightarrow \infty}\sup U_x(h(t),t;U_0,V_0,h_0)<0,
	\]
	it	implies
	\[
	\lim_{t\rightarrow \infty}\inf h^{\prime}(t)=\lim_{t\rightarrow \infty}
	\inf-\mu 
	U_x(h(t),t;U_0,V_0,h_0)>0,
	\]
	which is contradict to Theorem \ref{hg}. Thus, we can obtain that 
	$h_\infty-g_\infty<\infty$ gives 
	$h_\infty-g_\infty\leq 2L^*$.

	\textbf{\textbf{Step 2}} To show that if $h_\infty-g_\infty<\infty$, then 
	\begin{equation}
		\lim\limits _{t 
			\rightarrow \infty}\left\|U\left(\cdot,t; 
		U_{0}, 
		V_0, 
		h_{0}\right)\right\|_{C([g(t), h(t)])}=0,
		\lim\limits _{t \rightarrow \infty}\left\|V\left(\cdot,t; U_{0}, V_0,
		h_{0}\right)\right\|_{C([g(t), h(t)])}=0.
	\end{equation}
	Let
	\[
	\tilde{u}_{0}(x)=\left\{\begin{array}{ll}
		U_{0}(x), & \text { for } -h_0 \leq x \leq h_{0}, \\
		0, & \text { for } |x|>h_{0}.
	\end{array}\right.
	\]
	\[
	\tilde{v}_{0}(x)=\left\{\begin{array}{ll}
		V_{0}(x), & \text { for } -h_0 \leq x \leq h_{0}, \\
		0, & \text { for } |x|>h_{0}.
	\end{array}\right.
	\]
	
	Assume that 
	$(\bar{u}(x,t),\bar{v}(x,t))$ is the
	solution of the problem
	\[
	\left\{\begin{array}{ll}
		\bar{u}_{t}=D_1\bar{u}_{x x}+f_1( x,t, \bar{u},\bar v), & 
		g_\infty<x<h_{\infty},t>0, \\
		\bar{v}_{t}=D_2\bar{v}_{x x}+f_2( x,t, \bar{u},\bar v), & 
		g_\infty<x<h_{\infty},t>0,\\
		\bar{u}(g_\infty,t)=\bar{u}\left( h_{\infty},t\right)=0, & t>0, \\
		\bar{v}( g_\infty,t)=\bar{v}\left( h_{\infty},t\right)=0, & t>0,\\
		\bar{u}( x,0)=\tilde{u}_{0}(x), \bar{v}( x,0)=\tilde{v}_{0}(x), & 
		g_\infty 
		\leq x \leq h_{\infty}.
	\end{array}\right.
	\]
	
	Applying the Lemma \ref{lemma-comparison}, we can get that
	$$
	\bar{u}(x,t) \geq U\left(x,t ; U_{0}, V_0, h_{0}\right) \geq 0, 
	\bar{v}(x,t) 
	\geq 
	V\left( x,t 
	; U_{0}, V_0, h_{0}\right) \geq 0,\enspace{\rm for}\enspace 
	x \in[g(t), h(t)],\enspace t>0.
	$$
	If $h_{\infty}-g_\infty <2L^{*},$ assuming (\textit{\textbf{H5}}), then 
	$\lambda\left(A, 
	\frac{h_{\infty}-g_\infty}{2}\right)<0$.  By
	Lemma \ref{propA}, 
	$\lim\limits_{t\rightarrow\infty}(\bar{u},\bar{v})=(0,0)$
	uniformly for $x \in\left[g
	_\infty, 
	h_{\infty}\right]$. Hence, $$\lim _{t \rightarrow 
		\infty}\left\|U\left(\cdot,t; U_{0}, V_0,h_{0}\right)\right\|_{C([g(t), 
		h(t)])}=0, \lim _{t \rightarrow 
		\infty}\left\|V\left(\cdot,t; U_{0}, V_0, 
	h_{0}\right)\right\|_{C([g(t), 
		h(t)])}=0.$$
	If $h_{\infty}-g_\infty=2L^{*},$ without loss of generality, assume that 
	$\lim 
	\limits_{t \rightarrow 
		\infty}\left\|U\left(\cdot,t ; U_{0}, V_0, 
	h_{0}\right)\right\|_{C([g(t), 
		h(t)])} 
	\neq 
	0$, then there exist a sequence
	$\{\breve s_{n}\}$ with $\breve s_n \longrightarrow\infty$ as $n\rightarrow 
	\infty$, 
	$(\breve U^{*}(x), 
	\breve V^{*}(x))$ with $U^*\geq,\not\equiv 0$ and $\breve g_i^{*} \in 
	\textit{H}(f_i)$ such 
	that $\lim\limits_{n\rightarrow \infty}f_i \cdot \breve s_{n}=\breve 
	g_i^{*}$ and
	$$\lim\limits_{n\rightarrow \infty}\left\|U\left(\cdot,\breve s_{n} ; 
	U_{0}, V_0,
	h_{0}\right)-\breve U^{*}(\cdot)\right\|_{C\left(\left[g(\breve s_n), 
		h\left(\breve s_{n}\right)\right]\right)} = 0,$$
	$$\lim\limits_{n\rightarrow \infty}\left\|V\left(\cdot,\breve s_{n} ; 
	U_{0}, V_0,
	h_{0}\right)-\breve V^{*}(\cdot)\right\|_{C\left(\left[g(\breve s_n), 
		h\left(\breve s_{n}\right)\right]\right)} = 0.$$
	It follows that  
	$(U\left(\cdot,t ; \breve U^{*}, \breve V^*, 
	\breve g_1^{*},\breve g^*_2\right),V\left(\cdot,t ; 
	\breve U^{*}, \breve V^*,\breve g_1^{*}, \breve g^*_2\right))$ is the 
	entire solution for 
	the following equation,
	\begin{equation}
		\left\{\begin{array}{ll}
			u_{t}=D_1u_{x x}+ \breve g_1^{*}( x,t, u,v), \quad 
			g_\infty<x<h_\infty, \\
			v_{t}=D_2v_{x x}+ \breve g_2^{*}( x,t, u,v), \quad 
			g_\infty<x<h_\infty, \\
			u(g_\infty,t)=u( h_\infty,t)=0,\\
			v( g_\infty,t)=v( h_\infty,t)=0.
		\end{array}\right.
	\end{equation}
	Applying Hopf lemma to $U(h_\infty,t;\breve U^*, \breve V^*,\breve g^*_1, 
	\breve g^*_2)$ 
	and 
	$U(g_\infty,t;\breve U^*,\breve V^*, \breve g^*_1, \breve g^*_2)$, we can 
	get 
	that $$U_{x}\left( 
	h_\infty,t ; \breve U^{*}, \breve V^*,
	\breve g^{*}_1, \breve g^*_2\right)<0, U_{x}\left( 
	g_\infty,t ; \breve U^{*}, \breve V^*,
	\breve g^{*}_1, \breve g^*_2\right)>0,$$
	which implies 
	\[
	\lim _{n \rightarrow \infty} h^{\prime}\left(\breve s_{n}\right)=-\lim _{n 
		\rightarrow 
		\infty} \mu U_{x}\left( h\left(\breve s_{n}\right),\breve s_{n} ; 
	U_{0}, V_0, 
	h_{0}\right)>0
	\]
	and
	\[
	\lim\limits _{n \rightarrow \infty} 
	g^{\prime}\left(\breve s_{n}\right)=-\lim\limits 
	_{n 
		\rightarrow 
		\infty} \mu U_{x}\left( g\left(\breve s_{n}\right),\breve s_{n} ; 
	U_{0}, V_0, 
	h_{0}\right)<0.
	\]
	This is contradict to Theorem \ref{hg}. Thus, our proof is 
	completed.
\end{proof}

\begin{rem}
	From the proof of the above theorem, we can obtain that the densities of 
	infected populations will decay to 0 and the eventually 
	infected domain is no more than $2L^*$ when the disease vanishes.
\end{rem}

The following theorem gives the long-time asymptotic behavior as the spreading 
happens, which is the sharp distinction for our spatial heterogeneous and 
time almost periodic WNv model. 
\begin{theorem}\label{tm6}
	Assume that (\textit{\textbf{H}}1)-(\textit{\textbf{H}}5) hold. For any 
	given 
	$h_0$ and $(U_0,V_0)$ satisfying (\ref{system-2}), let 
	$$(U(x,t;U_0,V_0,h_0),V(x,t;U_0,V_0,h_0))$$ be the solution for 
	(\ref{system-1}). If 
	$h_\infty-g_\infty=\infty,$ then 
	\begin{equation}\label{uv}
		\lim\limits_{t\rightarrow 
			+\infty}U(x,t;U_0,V_0,h_0)-U^*(x,t)=0,
		\lim\limits_{t\rightarrow +\infty}V(x,t;U_0,V_0,h_0)-V^*(x,t)=0
	\end{equation}
	locally
	uniformly 
	for 
	$x\in\mathbb{R},$ where $(U^*(x,t),V^*(x,t))$ is the unique positive almost 
	periodic 
	solution of the  system (\ref{system-10}). That is, the disease will spread.
\end{theorem}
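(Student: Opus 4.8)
The plan is a two-sided squeezing argument resting on the Comparison Principle (Lemma~\ref{lemma-comparison}) and the fixed-interval long-time behaviour supplied by Lemma~\ref{propA}. First I would record that $h_\infty-g_\infty=\infty$ together with the monotonicity of the boundaries forces $h_\infty=+\infty$ and $g_\infty=-\infty$, so for every fixed $L\ge L^*$ there is a time $T_L$ with $[-L,L]\subset(g(t),h(t))$ for all $t\ge T_L$. On $[-L,L]$ I would compare $(U,V)$ from below with the solution $(U_L,V_L)$ of the fixed-boundary problem (\ref{equation-3}) (taking $g_i=f_i$) whose datum at $t=T_L$ is the restriction of $(U(\cdot,T_L),V(\cdot,T_L))$: since $U,V>0=U_L,V_L$ at $x=\pm L$ and the reaction is cooperative on the invariant region $0\le U\le N_1$, $0\le V\le N_2$, Lemma~\ref{lemma-comparison} gives $U\ge U_L$ and $V\ge V_L$ there. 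Because $L\ge L^*$ makes $\lambda(A,L)>0$ by (\textbf{H5}), Lemma~\ref{propA}(2) yields $U_L(\cdot,t)-U_L^*(\cdot,t)\to0$ and $V_L(\cdot,t)-V_L^*(\cdot,t)\to0$, whence $\liminf_{t\to\infty}(U-U_L^*)\ge0$ and $\liminf_{t\to\infty}(V-V_L^*)\ge0$ locally uniformly in $(-L,L)$.

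Next I would let $L\to\infty$. Comparing the fixed-boundary solutions on nested intervals (extending by zero across the smaller boundary) shows that $(U_L^*,V_L^*)$ is nondecreasing in $L$ and bounded above by $(N_1,N_2)$, while Lemma~\ref{lemma-inf} keeps it bounded away from zero on compacts. Hence $(U_L^*,V_L^*)\uparrow(U^*,V^*)$ locally uniformly as $L\to\infty$, the limit being exactly the unique positive almost periodic solution of the whole-line problem (\ref{system-10}). Inserting this into the previous step produces the lower estimate $\liminf_{t\to\infty}(U-U^*)\ge0$ and $\liminf_{t\to\infty}(V-V^*)\ge0$, locally uniformly in $x$.

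For the matching upper estimate I would extend $(U_0,V_0)$ by zero to $(\tilde u_0,\tilde v_0)$ on $\mathbb{R}$ and note that, because $f_i(x,t,0,0)=0$, the free-boundary solution extended by zero is a subsolution of (\ref{system-10}); comparison then gives $U\le\bar U$ and $V\le\bar V$, where $(\bar U,\bar V)$ solves (\ref{system-10}) with datum $(\tilde u_0,\tilde v_0)$. Since $(U^*,V^*)$ attracts every nontrivial nonnegative bounded solution of the Cauchy problem (\ref{system-10})---the unbounded-domain counterpart of Lemma~\ref{propA}(2), obtainable by sandwiching $\bar U$ between the $L$-truncated solutions from below and the supersolution issuing from the constant state $(N_1,N_2)$ from above---we obtain $\limsup_{t\to\infty}(U-U^*)\le0$, and likewise for $V$, locally uniformly. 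Combining the two one-sided bounds proves (\ref{uv}). I expect the genuine difficulty to lie precisely here: transferring the fixed-interval attractivity of Lemma~\ref{propA} to the unbounded domain, that is, showing that the monotone limit $(U^*,V^*)$ is almost periodic, is the unique positive entire solution, and attracts the whole-line dynamics from both sides. The lower bound is routine once $L\ge L^*$, whereas the upper bound and the identification of $(U^*,V^*)$ as a global attractor on $\mathbb{R}$ form the crux of the argument.
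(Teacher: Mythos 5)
Your proposal follows essentially the same route as the paper's proof: both boundaries are sent to infinity via Theorem \ref{hg}, the lower bound comes from comparison with the truncated fixed-boundary problems on $[-L,L]$ with $\lambda(A,L)>0$ and Lemma \ref{propA}(2), the limit $(U_L^*,V_L^*)\to(U^*,V^*)$ as $L\to\infty$ is controlled by Lemma \ref{lemma-inf}, and the upper bound comes from dominating by the whole-line solution descending from $(N_1,N_2)$ (the paper takes this pullback limit as the definition of $(U^*,V^*)$ and then identifies it with $\lim_L U_L^*$, which is just the reverse ordering of your two characterizations). The one piece you flag but do not execute, the almost periodicity of $(U^*,V^*)$, is handled in the paper by Fink's double-sequence criterion (Theorems 1.17 and 2.10 of \cite{fink1974}) applied to $U(x,t+t_n+s_m;U^*(f_1,f_2),V^*(f_1,f_2),f_1,f_2)$, exactly at the spot you identified as the crux.
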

\begin{proof}
	Firstly, we aim to show that $h_\infty=\infty$ and 
	$g_\infty=-\infty$ 
	when 
	$h_\infty-g_\infty=\infty$. On 
	the 
	contrary, assume that $g_\infty=-\infty$ and 
	$h_\infty<\infty$. According to Theorem \ref{hg}, it is easily 
	to 
	yield a contradiction to $\lim\limits_{t\rightarrow \infty}h^{\prime}(t)=0$.
	Therefore, $g_\infty=-\infty$ and $h_\infty=\infty$.
	
	Next we will prove (\ref{uv}). 
	Let $U_0:=N_1,V_0:=N_2$, then by Comparison Principle, 
	$U(x,t;N_1,N_2,f_1\cdot (-t),f_2\cdot(-t))$ and $V(x,t;N_1,N_2,f_1\cdot 
	(-t),f_2\cdot(-t))$ decrease in $t\in\mathbb R$.
	Take	
	$$U^*(f_1,f_2)(x):=\lim\limits_{t\rightarrow\infty}U(x,t;N_1,N_2,f_1\cdot(-t),
	f_2\cdot(-t)), 
	V^*(f_1,f_2)(x):=\lim\limits_{t\rightarrow\infty}V(x,t;N_1,N_2,f_1\cdot(-t),
	f_2\cdot(-t))$$ for $x\in \mathbb R$. And it follows
	\[U(\cdot,t;U^*(f_1,f_2)(x),V^*(f_1,f_2)(x),f_1,f_2)=U^*(f_1\cdot 
	t,f_2\cdot 
	t)(\cdot),\]
	\[V(\cdot,t;U^*(f_1,f_2)(x),V^*(f_1,f_2)(x),f_1,f_2)=V^*(f_1\cdot 
	t,f_2\cdot 
	t)(\cdot),\]
	where $(U(x,t;N_1,N_2,f_1,
	f_2), V(x,t;N_1,N_2,f_1,
	f_2))$ is the solution for (\ref{equation-3}) for $U_0=N_1,V_0=N_2$ and 
	$L=\infty$. Let
	$U_{L}(f_1,f_2)(x)$ and $V_{L}(f_1,f_2)(x)$ be in Lemma 
	\ref{propA},  
	then for any fixed $x, 
	U_{L}(f_1,f_2)(x)$ and $V_{L}(f_1,f_2)(x)$ are  
	increasing in $L$. Applying the Comparison Principle and Lemma 
	\ref{lemma-inf},  we can obtain that
	\begin{equation}\label{equation-21}
		\lim _{L \rightarrow \infty} U_{L}(f_1,f_2)(x)=
		U^{*}(f_1,f_2)(x),\lim _{L \rightarrow \infty} V_{L}(f_1,f_2)(x)=
		V^{*}(f_1,f_2)(x)
	\end{equation}
	locally uniformly for $x\in \mathbb R$. 
	
	For any $T>0$ satisfying $h(T)-g(T)>2L^{*}$, denote $U\left( 
	\cdot,T 
	; 
	U_{0}, V_0, h_{0}\right):=U(\cdot,T)$ and $V\left( 
	\cdot,T 
	; 
	U_{0}, V_0, h_{0}\right):=V(\cdot,T)$, we can get   
	$$
	U\left(x,t+T ; U_{0},V_0, h_{0}\right) \geq U_{L}\left(x,t ; U\left( 
	\cdot,T 
	\right), V\left( \cdot, T 
	\right),f_1 \cdot T,f_2 \cdot T\right) \quad {\rm for} 
	\quad  
	t 
	\geq 0,
	$$
	$$
	V\left(x,t+T ; U_{0},V_0, h_{0}\right) \geq V_{L}\left(x,t; U(\cdot,T), 
	V\left( 
	\cdot, T
	\right), f_1 \cdot T,f_2 \cdot T\right) \quad {\rm for} 
	\quad  
	t 
	\geq 0,
	$$
	where $(U_{L}\left( x,t ; U\left(\cdot,T \right), V\left( \cdot,T \right), 
	f_1 
	\cdot 
	T,f_2 \cdot T\right), V_{L}\left( x,t ; U\left( \cdot,T \right),
	V\left( \cdot ,T
	\right), f_1 \cdot 
	T,f_2 \cdot T\right)$  is the solution of following system
	\begin{equation}
		\left\{\begin{array}{ll}{U_{t}=D_{1} U_{x x}+f_1\cdot T(x,t,U,V),} & 
			{g\left(T ; U_{0},V_0,  
				h_{0}\right)<x<h\left(T ; U_{0},V_0,  
				h_{0}\right), 
				\enspace t>0,} \\ {V_{t}=D_{2} V_{x 
					x}+f_2\cdot T(x,t,U,V),} & {g\left(T ; U_{0},V_0,  
				h_{0}\right)<x<h\left(T ; U_{0},V_0,  
				h_{0}\right), \enspace 
				t>0,} \\ {U(x, t)=V(x, t)=0,} & {x=g\left(T ; U_{0},V_0,  
				h_{0}\right) \text { or } 
				x=h\left(T ; U_{0},V_0,  
				h_{0}\right),\enspace 
				t>0} 
		\end{array}\right.
	\end{equation}
	with $L=$ $\dfrac{h\left(T ; U_{0},V_0, 
		h_{0}\right)-g\left(T ; U_{0},V_0,  
		h_{0}\right)}{2},$ $$U_L\left( x,0 ; U\left( \cdot,T \right), 
	V\left( \cdot,T \right),
	f_1
	\cdot T,f_2
	\cdot T\right)=U\left( x,T ; U_{0},V_0, h_{0}\right)$$  and 
	$$V_L\left( x,0 ; 
	U\left( 
	\cdot,T\right), V\left( \cdot,T\right),
	f_1 
	\cdot T, f_2 
	\cdot T\right)=V\left( x,T ; U_{0},V_0, h_{0}\right).$$ According to  
	Lemma 
	\ref{propA},
	$$
	\lim\limits_{t\rightarrow \infty}U_L\left( x,t ; U\left( \cdot,T\right), 
	V\left( \cdot,T\right), f_1 \cdot 
	T,f_2 \cdot 
	T\right)-U_L(f_1 \cdot(t+T),f_2 \cdot(t+T))(x)=0,
	$$
	$$
	\lim\limits_{t \rightarrow \infty}V_L\left( x,t ; U\left( \cdot,T\right), 
	V\left( \cdot,T\right), f_1 \cdot 
	T,f_2 \cdot 
	T\right)-V_L(f_1 \cdot(t+T),f_2 \cdot(t+T))(x)=0 
	$$
	uniformly for $x$ in $[g\left(T ; U_{0},V_0,  
	h_{0}\right),h\left(T ; U_{0},V_0,  
	h_{0}\right)]$. In view of (\ref{equation-21})
	$$
	\lim\limits_{L \rightarrow \infty}U_L(f_1 \cdot(t+T),f_2 
	\cdot(t+T))(x)=U^{*}(f_1 \cdot(t+T),f_2 \cdot(t+T))(x) 
	,
	$$
	$$
	\lim\limits_{L \rightarrow \infty}V_L(f_1 \cdot(t+T),f_2 
	\cdot(t+T))(x)=V^{*}(f_1 \cdot(t+T),f_2 \cdot(t+T))(x) 
	$$
	uniformly for $x$ in any bounded sets of  $\mathbb 
	R.$ By Comparison Principle,  $$(U(x,t;U_0,V_0,h_0),V(x,t;U_0,V_0,h_0))\geq 
	(U_L(x,t;U_0,V_0,h_0),V_L(x,t;U_0,V_0,h_0))$$ uniformly for $(x,t)\in 
	[g(t),h(t)]\times 
	[0,\infty)$ . Then we can get
	$$
	\lim\limits_{t \rightarrow \infty}U\left( x,t ; U_{0}, V_0, 
	h_{0}\right)-U^{*}(f_1 \cdot t,f_2 \cdot t)(x)=0 
	$$
	and
	$$
	\lim\limits_{t \rightarrow \infty}V\left( x,t ; U_{0}, V_0, 
	h_{0}\right)-V^{*}(f_1 \cdot t,f_2 \cdot t)(x)=0 
	$$
	locally uniform for $x\in\mathbb R.$ Therefore, $(U^*(f_1 \cdot t,f_2 \cdot 
	t)(x),V^*(f_1 \cdot t,f_2 \cdot t)(x))$ is the solution for 
	(\ref{system-10}). Applying the similar method in Proposition 4.1(3) of 
	\cite{shen2015free}, the uniqueness of the solution 
	can be easily proved.
	Further, by Lemma \ref{lemma-inf}, we can obtain that
	\begin{equation}\label{ii}
		\inf _{x\in \mathbb R,t\in\mathbb 
			R^+}U^*(f_1\cdot t,f_2\cdot t)(x)>0,\quad 
		\inf_{x\in \mathbb R, t\in\mathbb 
			R^+}V^*(f_1\cdot t,f_2\cdot t)(x)>0.
	\end{equation}
	Take $U^*(x,t)=U^*(f_1 \cdot t,f_2 \cdot t)(x)$, $V^*(x,t)=V^*(f_1 \cdot 
	t,f_2 
	\cdot t)(x)$.    It is only necessary 
	to prove that $U^*(f_1 \cdot t,f_2 \cdot t)(x)$ and 
	$V^*(x,t)=V^*(f_1 \cdot t,f_2 
	\cdot t)(x)$ are uniformly almost periodic in $t\in\mathbb R$   with $x$ in 
	bounded 
	subsets 
	of 
	$\mathbb R$. Since $f_i(x,t,U,V)$ is uniformly  almost periodic  in $t$  
	with 
	$x\in \mathbb R$  and $(U,V)$ in bounded subsets of $\mathbb R^2$ for 
	$i=1,2$, 
	according to 
	Theorems 1.17 and 2.10 (\cite{fink1974}), for any 
	sequences $\{a_n\}
	\subset \mathbb R$ and $\{b_n\}\subset \mathbb R$, there exist 
	$\{t_n\}\subset 
	\{a_n\}$ and $\{s_n\}\subset \{b_n\}$  such that 
	\[\lim_{n\rightarrow \infty}f_i(x,t+t_n+s_n,U,V)=\lim_{n\rightarrow 
		\infty}\lim_{m\rightarrow \infty}f_i(x,t+t_n+s_m,U,V)\] for 
	$(x,t,U,V)\in 
	\mathbb 
	R^4, i=1,2$.  Assume that $$\lim_{n\rightarrow 
		\infty}f_i(x,t+t_n+s_n,U,V)=f^*_i(x,t,U,V),
	\lim_{m\rightarrow \infty}f_i(x,t+s_m,U,V)=f^{**}_i(x,t,U,V).$$ 
	Then we can get that 
	$$\lim\limits_{m\rightarrow\infty}U(x,t+s_m,U^*(f_1,f_2),V^*(f_1,f_2),f_1,f_2)=
	U^*(f^{**}_1\cdot t,f^{**}_2\cdot t)(x),$$
	$$\lim\limits_{m\rightarrow\infty}V(x,t+s_m,U^*(f_1,f_2),V^*(f_1,f_2),f_1,f_2)=
	V^*(f^{**}_1\cdot t,f^{**}_2\cdot t)(x)$$
	uniformly for $x$ in bounded sets of $\mathbb R$.
	Further, it follows that
	\begin{equation}
		\begin{aligned}
			&\lim_{n\rightarrow \infty}\lim_{m\rightarrow 
				\infty}U(x,t+t_n+s_m,U^*(f_1,f_2),V^*(f_1,f_2),f_1,f_2)
			\\&=\lim_{n\rightarrow\infty}U(x,t_n,U^*(f^{**}_1\cdot
			t,f^{**}_2\cdot t),V^*(f^{**}_1\cdot t,f^{**}_2\cdot 
			t),f^{**}_1\cdot 
			t,f^{**}_2\cdot t)
			\\&=\lim_{n\rightarrow\infty}U(x,t,U^*(f^{**}_1\cdot
			t_n,f^{**}_2\cdot 
			t_n),V^*(f^{**}_1\cdot
			t_n,f^{**}_2\cdot 
			t_n), f^{**}_1\cdot t_n,f^{**}_2\cdot t_n)
			\\&=U^*(f_1^{*}\cdot t,f_2^{*}\cdot 
			t)(x),
		\end{aligned}	
	\end{equation}
	
	\begin{equation}
		\begin{aligned}
			&\lim_{n\rightarrow \infty}\lim_{m\rightarrow 
				\infty}V(x,t+t_n+s_m,U^*(f_1,f_2),V^*(f_1,f_2),f_1,f_2)
			\\&=\lim_{n\rightarrow\infty}V(x,t_n,U^*(f^{**}_1\cdot
			t,f^{**}_2\cdot t),V^*(f^{**}_1\cdot t,f^{**}_2\cdot 
			t),f^{**}_1\cdot 
			t,f^{**}_2\cdot t)
			\\&=\lim_{n\rightarrow\infty}V(x,t,U^*(f^{**}_1\cdot
			t_n,f^{**}_2\cdot 
			t_n),V^*(f^{**}_1\cdot
			t_n,f^{**}_2\cdot 
			t_n),f^{**}_1\cdot t_n,f^{**}_2\cdot t_n)
			\\&=V^*(f_1^{*}\cdot t,f_2^{*}\cdot 
			t)(x)
		\end{aligned}	
	\end{equation}
	uniformly for $x$ in bounded sets of $\mathbb R$.
	Moreover,
	\[	\lim_{n\rightarrow 
		\infty}U(x,t+t_n+s_n,U^*(f_1,f_2),V^*(f_1,f_2),f_1,f_2)=U^*(f_1^{*}\cdot
	t,f_2^{*}\cdot 
	t)(x)\]
	\[	\lim_{n\rightarrow 
		\infty}V(x,t+t_n+s_n,U^*(f_1,f_2),V^*(f_1,f_2),f_1,f_2)=V^*(f_1^{*}\cdot
	t,f_2^{*}\cdot 
	t)(x)\]
	uniformly for $x$ in bounded sets of $\mathbb R$.
	Thus, \[\lim_{n\rightarrow \infty}\lim_{m\rightarrow 
		\infty}U(x,t+t_n+s_m,U^*({f_1,f_2}),V^*({f_1,f_2}),f_1,f_2)\]
	\[=\lim_{n\rightarrow
		\infty}U(x,t+t_n+s_n,U^*(f_1,f_2),V^*(f_1,f_2),f_1,f_2),\]
	\[\lim_{n\rightarrow \infty}\lim_{m\rightarrow 
		\infty}V(x,t+t_n+s_m,U^*({f_1,f_2}),V^*({f_1,f_2}),f_1,f_2)\]
	\[=\lim_{n\rightarrow
		\infty}V(x,t+t_n+s_n,U^*(f_1,f_2),V^*(f_1,f_2),f_1,f_2).\]
	According to the regularity and priori estimates for parabolic differential 
	equations, 
	$$U(x,t,U^*(f_1,f_2),V^*(f_1,f_2),f_1,f_2)\quad\text{and}\quad
	V(x,t,U^*(f_1,f_2),V^*(f_1,f_2),f_1,f_2)$$ are uniformly continuous for 
	$(x,t)\in\mathbb R^2$, applying Theorems 1.17 and 2.10 (\cite{fink1974}),  
	it follows that
	$U^*(f_1\cdot t,f_2\cdot t)(x)$ and $V^*(f_1\cdot 
	t,f_2\cdot t)(x)$ are almost periodic in $t\in \mathbb 
	R$ uniformly with  $x$ in bounded sets of $\mathbb R$.
	Therefore, our proof is completed.
\end{proof}

\begin{proof}[Proof of Theorem \ref{dichotomy}]
	Assume that (\textit{\textbf{H1}})-(\textit{\textbf{H5}}) hold. For any 
	given $g(0),h(0)$ and initial functions $(U_0,V_0)$ satisfying 
	(\ref{system-2}). Let 
	$$(U(x,t,U_0,V_0,g,h),V(x,t,U_0,V_0,g,h))$$ be the solution of system 
	(\ref{system-1}),
	It is easy to see that either $h_\infty-g_\infty<\infty$ or 
	$h_\infty-g_\infty=\infty$ holds. According to Theorem \ref{tm5}, if 
	$h_\infty-g_\infty<\infty$, then $h_\infty-g_\infty\leq 2L^*$. And 
	$$\lim\limits_{t\rightarrow\infty}
	(U(x,t;U_0,V_0,h_0),V(x,t;U_0,V_0,h_0))=0$$ uniformly for $x\in 
	[g_\infty,h_\infty]$. According to Theorem 
	\ref{tm6}, if $h_\infty-g_\infty=\infty,$ then 
	$$\lim\limits_{t\rightarrow 
		+\infty}U(x,t;U_0,V_0,h_0)-U^*(x,t)=0, \lim\limits_{t\rightarrow 
		+\infty}V(x,t;U_0,V_0,h_0)-V^*(x,t)=0$$ 
	locally 
	uniformly 
	for $x$ in 
	$\mathbb{R}$.
	Thus, the spreading-vanishing 
	dichotomy for system (\ref{system-1}) with (\ref{system-2}) holds.
\end{proof}
\begin{cor} According to 
	the above theorem, assume that 
	(\textit{\textbf{H1}})-(\textit{\textbf{H5}}) hold, it is natural to 
	obtain
	that  if $\lambda(t)<0$ for any $t>0$, then
	the trivial equilibrium
	$(0,0)$ 
	is globally asymptotically stable.
	If 
	$\lambda(T)>0$ for some $T>0$, then $h(T)-g(T)\geq2L^*$. Taking 
	$T$ as the 
	initial time, we can get that the disease will spread and the trivial 
	equilibrium $(0,0)$ is unstable.
\end{cor}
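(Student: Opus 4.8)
The plan is to read off both assertions from the spreading--vanishing dichotomy (Theorem \ref{dichotomy}), after converting the sign of the principal Lyapunov exponent $\lambda(t)=\lambda(A,\frac{h(t)-g(t)}{2})$ into information about the width $h(t)-g(t)$ of the infected window. The three tools that do the conversion are the monotonicity of $\lambda(A,L)$ in $L$ (Lemma \ref{mono}), the monotonicity of $\lambda(t)$ in $t$ (Theorem \ref{ll}), and the threshold hypothesis \textbf{(H5)}. So the whole proof should be a short bookkeeping argument on top of results already established; no new PDE estimates are needed.

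For the first claim I would argue by contraposition on the width. By \textbf{(H5)} together with Lemma \ref{mono}, one has $\lambda(A,L)>0$ whenever $L\ge L^*$; hence the standing assumption $\lambda(t)<0$ for all $t>0$ forces $\frac{h(t)-g(t)}{2}<L^*$ at every time, i.e. $h(t)-g(t)<2L^*$. Letting $t\to\infty$ gives $h_\infty-g_\infty\le 2L^*<\infty$, which places us in the vanishing alternative of Theorem \ref{dichotomy}, so $U(\cdot,t),V(\cdot,t)\to 0$ uniformly on $[g_\infty,h_\infty]$ and $(0,0)$ is globally attracting. To upgrade attraction to global asymptotic stability I would invoke the a priori bounds $0<U\le N_1,\ 0<V\le N_2$ of Lemma \ref{bounded} together with the Comparison Principle (Lemma \ref{lemma-comparison}) to trap orbits issued from small data, so that uniform boundedness plus attraction yields Lyapunov stability as well.

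For the second claim, Theorem \ref{ll} first upgrades $\lambda(T)>0$ to $\lambda(t)\ge\lambda(T)>0$ for all $t\ge T$. The width estimate $h(T)-g(T)\ge 2L^*$ then follows from Theorem \ref{threshold}(1) read with $T$ as the initial instant: the time shift $t\mapsto t+T$ replaces the coefficients by elements of their hull (admissible by \textbf{(H3)}--\textbf{(H4)}) and turns $\lambda(T)$ into the $\lambda(0)$ of the shifted problem. The same theorem delivers $h_\infty-g_\infty=\infty$, so the spreading alternative of Theorem \ref{dichotomy} applies and $(U,V)\to(U^*,V^*)$, the positive almost periodic solution of \eqref{system-10}. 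Since $\inf U^*,\inf V^*>0$ by Lemma \ref{lemma-inf} (cf.\ \eqref{ii}), trajectories are eventually bounded away from $(0,0)$, which is precisely the instability of the trivial equilibrium.

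The main obstacle I anticipate is the implication ``$\lambda(T)>0\Rightarrow h(T)-g(T)\ge 2L^*$''. Hypothesis \textbf{(H5)} only asserts positivity of $\lambda(A,L)$ for $L\ge L^*$ and does not by itself rule out $\lambda(A,L)>0$ for some $L<L^*$, so this step cannot be obtained from \textbf{(H5)} and Lemma \ref{mono} alone; one must appeal to the precise threshold characterization of $L^*$ built in Section \ref{basic}, or, more cleanly, simply cite Theorem \ref{threshold}(1) rather than re-derive it. A related point requiring care is that the moving window is centred at $\frac{g(t)+h(t)}{2}$, so $\lambda(t)$ really involves the shifted matrix $A(\cdot+\tilde x,\cdot)$; it is the \emph{uniform}-in-$\tilde x$ infimum in \textbf{(H5)} that must be used here. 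Finally, I would state explicitly that ``globally asymptotically stable'' and ``unstable'' are claims about the equilibrium over the whole admissible class of initial data satisfying \eqref{system-2}, not about a single trajectory, so that the attraction and repulsion conclusions of the dichotomy are understood to hold uniformly over that class.
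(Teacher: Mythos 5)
Your proposal is correct and follows essentially the same route the paper intends: the paper offers no explicit proof of this corollary, treating both assertions as immediate consequences of Theorem \ref{dichotomy} together with Theorem \ref{threshold}, Lemma \ref{mono}, Theorem \ref{ll} and (\textbf{H5}), which is exactly the bookkeeping you carry out. The one subtlety you rightly flag --- that the implication $\lambda(T)>0\Rightarrow h(T)-g(T)\ge 2L^*$ needs the converse direction of (\textbf{H5}), i.e.\ that $L^*$ is a genuine threshold rather than merely a sufficient size --- is an imprecision already present in the paper's own Theorem \ref{threshold}(1) and its use in Step 2 of Theorem \ref{tm5}, and your resolution (citing Theorem \ref{threshold}(1) with $T$ as the initial instant) is precisely what the corollary's wording ``Taking $T$ as the initial time'' presupposes.
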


\begin{cor}
	According to the above arguments and the positivity of 
	$(U^*(x,t),V^*(x,t))$ in (\ref{ii}), by the persistence theory 
	in Section 3 of \cite{zhao2001} or upper and lower solution method, we can 
	get that when the spreading happens, there is a positive constant $c^*$ 
	such that 
	$$\lim\limits_{t\rightarrow \infty}U(x,t;U_0,V_0,h_0)\geq c^*, 
	\lim\limits_{t\rightarrow 
		\infty}V(x,t;U_0,V_0,h_0)\geq c^*.$$
	locally uniformly for $x\in \mathbb R$. 
\end{cor}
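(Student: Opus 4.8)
The plan is to read off this permanence bound directly from the spreading case already proved in Theorem~\ref{tm6}, using the uniform positivity estimate (\ref{ii}) to supply the constant $c^*$; the abstract persistence machinery quoted in the statement is an alternative but is heavier than necessary. Throughout I interpret the two limits in the statement as $\liminf$, since a genuine lower bound is what uniform persistence asserts.

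First I would note that we are in the spreading regime $h_\infty-g_\infty=\infty$, so Theorem~\ref{tm6} applies and gives
$$\lim_{t\to\infty}\big(U(x,t;U_0,V_0,h_0)-U^*(x,t)\big)=0,\quad \lim_{t\to\infty}\big(V(x,t;U_0,V_0,h_0)-V^*(x,t)\big)=0$$
locally uniformly for $x\in\mathbb R$, where $(U^*(x,t),V^*(x,t))=(U^*(f_1\cdot t,f_2\cdot t)(x),V^*(f_1\cdot t,f_2\cdot t)(x))$ is the unique positive almost periodic solution of (\ref{system-10}). (In fact only the lower bound is needed: the proof of Theorem~\ref{tm6} already shows $U\geq U_L\to U^*$ and $V\geq V_L\to V^*$ through the monotone family of lower solutions, which is precisely the upper-and-lower-solution route mentioned in the statement.)

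Next I would invoke the global positivity (\ref{ii}) established in the proof of Theorem~\ref{tm6} via Lemma~\ref{lemma-inf}, namely
$$c_U:=\inf_{x\in\mathbb R,\,t\in\mathbb R^+}U^*(x,t)>0,\quad c_V:=\inf_{x\in\mathbb R,\,t\in\mathbb R^+}V^*(x,t)>0,$$
and set $c^*:=\tfrac12\min\{c_U,c_V\}>0$. Fix any bounded set $B\subset\mathbb R$. By the locally uniform convergence there is $T_B>0$ so that $|U(x,t;U_0,V_0,h_0)-U^*(x,t)|\leq c^*$ and $|V(x,t;U_0,V_0,h_0)-V^*(x,t)|\leq c^*$ for all $t\geq T_B$, $x\in B$. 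Hence for such $t$ and $x$,
$$U(x,t;U_0,V_0,h_0)\geq U^*(x,t)-c^*\geq c_U-c^*\geq c^*,$$
and likewise $V(x,t;U_0,V_0,h_0)\geq c^*$. Taking $\liminf_{t\to\infty}$ and letting $B$ range over all bounded sets yields the claim locally uniformly for $x\in\mathbb R$.

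The only real subtlety — and the \emph{main obstacle} insofar as there is one — is bookkeeping the two modes of uniformity: the convergence in Theorem~\ref{tm6} is merely locally uniform in $x$, whereas (\ref{ii}) is a genuine infimum over \emph{all} $x$ and $t$. Because $c^*$ is drawn from that global infimum, it is independent of $B$, so only the threshold time $T_B$ depends on $B$ and the single constant $c^*$ works on every bounded set; this is exactly what ``locally uniformly for $x$ with one constant $c^*$'' requires. Should one prefer the route flagged in the statement, one would instead set up the skew-product semiflow $\Pi_t$ of (\ref{equation-22}), use (\textbf{H5}) and Lemma~\ref{propA}(2) to show the trivial state is a uniform weak repeller, and apply the uniform persistence theory of Section~3 of~\cite{zhao2001}; this reproduces the same $c^*$ but is not needed once Theorem~\ref{tm6} is in hand.
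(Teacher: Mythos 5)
Your proposal is correct and follows essentially the same route the paper indicates: the corollary is stated with its justification embedded (combine the spreading convergence of Theorem~\ref{tm6} with the global positivity bound (\ref{ii})), and your argument simply makes that $\varepsilon$--$T_B$ bookkeeping explicit, including the correct observation that the constant $c^*$ comes from a global infimum and is therefore independent of the bounded set. Your reading of the stated limits as $\liminf$ is also the right interpretation.
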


Finally, we turn to prove Theorem \ref{threshold}.
\begin{proof}[Proof of Theorem \ref{threshold}]
	\textbf{(1)} Assume that (\textit{\textbf{H5}}) holds, considering that 
	$h(t)$ is increasing and $g(t)$ is 
	decreasing, if 
	$\lambda(0)>0$,  then $h(0)-g(0)\geq2 L^{*}$, and
	\begin{equation}\label{equation-15}
		h_{\infty}-g_\infty>h(0)-g(0) \geq 2L^{*}.
	\end{equation}
	Further, we can get that
	$\lambda(A,\frac{h_\infty-g_\infty}{2})>0.$
	According to Theorem \ref{dichotomy}, we can obtain that
	$h_\infty-g_\infty=\infty$. Therefore, the disease is spreading.\\
	\textbf{(2)} Assume that $h(0)-g(0)<2L^{*}$. Denote 
	${h_\mu}(\infty):=\lim\limits_{t\rightarrow \infty}h_\mu(t), 
	{g_\mu}(\infty):=\lim\limits_{t\rightarrow \infty}g_\mu(t)$.
	
	Let
	\begin{equation}\label{nu}
		\Lambda:= \left\{\mu \mid {h_\mu}(\infty)-	
		{g_\mu}(\infty)<\infty\right\}, \nu:=\sup \Lambda.
	\end{equation}
	If $\Lambda$ is  an empty set, then 
	${h_{\mu^*}}(\infty)-{g_{\mu^*}}(\infty)=\infty$ for all $\mu>0$. In this 
	case,  $\mu^*=0$ satisfies the conditions.
	If $\Lambda$ is a
	nonempty set, we first prove that $\nu\in \Lambda$.
	On the contrary, assume that
	${h_{\nu}}(\infty)-{g_{\nu}}(\infty)=\infty.$ Then there exists a 
	$T>0$ 
	such that $h_{\nu}(T)-g_{\nu}(T)>2L^{*} .$ In view of the continuous 
	dependence of 
	$h_{\mu}$ and $g_\mu$ on $\mu,$ there is a $\varepsilon>0$ small enough 
	such 
	that 
	$h_{\mu}(T)-g_{\mu}(T)>2L^{*}$ for any $\mu \in\left[\nu-\varepsilon, 
	\nu+\varepsilon\right] .$ Therefore, we have
	$$
	{h_{\mu}}(\infty)-{g_{\mu}}(\infty)=\lim _{t \rightarrow \infty} 
	h_{\mu}(t)-g_\mu(t)>h_{\mu}(T)-g_{\mu}(T)>2L^{*}, \enspace \mu 
	\in\left[\nu-\varepsilon, 
	\nu+\varepsilon\right].
	$$
	According to (\ref{equation-15}), we obtain that
	${h_\mu}(\infty)-	
	{g_\mu}(\infty)=\infty,$ which implies that 
	$\Lambda\cap\left[\nu-\varepsilon, 
	\nu+\varepsilon\right]$ is an empty set.
	It is contradict to (\ref{nu}). Thus, we have 
	proved that $
	{h_{\nu}}(\infty)-{g_{\nu}}(\infty)<\infty$.
	
	When $\mu>\nu,$ we claim that ${h_\mu}(\infty)-	
	{g_\mu}(\infty)=\infty.$ 
	On the contrary, assume that ${h_\mu}(\infty)-	
	{g_\mu}(\infty)<\infty,$ then
	$\mu \leq \nu,$ which is a contradiction. Therefore, by Theorem 
	\ref{dichotomy}, the 
	spreading happens. 
	
	When 
	$\mu \leq \nu,$ by the Lemma \ref{hmu(t)}, we can obtain
	$$
	h_{\mu}(t)-g_{\mu}(t) \leq h_{\nu}(t)-g_{\nu}(t) 
	\text { for 
		all } t \in(0,+\infty).
	$$
	Moreover, ${h_\mu}(\infty)-	
	{g_\mu}(\infty)\leq 
	{h_{\nu}}(\infty)-	
	{g_{\nu}}(\infty)<\infty,$ thus, by Theorem \ref{dichotomy}, the vanishing 
	happens. In this case, we can take $\mu^*=\nu$.
	Therefore, our proof is completed.
\end{proof}

\begin{remark}
	{\rm When the initial infected domain is smaller than 2$L^*$, for any given 
		initial functions $(U_0,V_0),$ the spreading 
		or 
		vanishing of the epidemic disease mainly depend on the 
		front expanding rate $\mu$. }
\end{remark}

\noindent
\section{Simulations and Discussions}\label{discussion}	
\subsection{Simulations}
\noindent
\\In this subsection, we make some numerical simulations about our WNv model. 
Since the parameters $a_i(x,t)$ and $d_i(x,t)(i=1,2)$ are positive 
almost periodic functions, and the double boundaries are moving, the classical
numerical simulation methods is not proper. We use the implicit finite 
difference scheme developed 
in \cite{2009numerical} for numerical simulations about the free boundaries 
problems
to make some simulations to identify our results.

Fix the parameter values in system (\ref{system-1}) 
as follows, the explicit biological interpretations of which can be seen from 
Komar et al.\cite{komar2003}, Wonham et al.\cite{wonham2004epidemic} or Lewis 
et al.\cite{lewis2006traveling},
\[D_1=3,\enspace D_2=0.125,
\enspace N_1=1,
\enspace N_2=20, \enspace \beta=0.6,\]      
\[\alpha_1(x,t)=0.88\times(1+0.56\times \cos(\frac{t}{2}))
+0.088\times\dfrac{2+x}{1+x^2}\cos  x, x\in \mathbb R, t\geq0,\]
\[\alpha_2(x,t)=0.16\times(1+0.2\times \cos(\frac{\pi t}{3})
+0.024\times\dfrac{1+x}{1+x^2}\cos x, x\in \mathbb R, t\geq0,\]
\[d_1(x,t)=0.1\times(1+0.3\times \sin(\frac{t}{3}))
+0.02\times\dfrac{2+x}{1+x^2}\sin x, x\in \mathbb R, t\geq0,\]
\[d_2(x,t)=0.029\times(1+0.1\times \sin(\frac{\pi t}{2}))
+0.0016\times\dfrac{1+x}{1+x^2}\sin x, x\in \mathbb R, t\geq0.\]

Take the following initial functions:
\begin{equation}\label{61}
	U_{0}(x)=\left\{\begin{array}{ll}
		0.1\times \cos(\frac{\pi x}{2h_0}), & \text { for } -h_0 \leq x \leq 
		h_{0}, 
		\\
		0, & \text { for } |x|>h_{0}.
	\end{array}\right.
\end{equation}

\begin{equation}\label{62}
	V_{0}(x)=\left\{\begin{array}{ll}
		2\times \cos(\frac{\pi x}{2h_0}), & \text { for } -h_0 \leq x \leq 
		h_{0}, \\
		0, & \text { for } |x|>h_{0}.
	\end{array}\right.
\end{equation}	
Moreover, in order to simplify the simulations, we take $h(0)=h_0, g(0)=-h_0$. 
\subsubsection{The effect of the initial infected domain.}
\noindent
\\Fix $\mu=0.1$, take $h_0=2.0,1.0,0.6,0.5$, respectively. 
As is shown in Fig.1, the WNv is spreading  and the solution for system 
(\ref{system-1}) converges to a positive heterogeneous steady state 
in Fig.1(a) and Fig.1(b), which indicates that the recurrent apperance of the 
cases of infection;
while the  WNv is vanishing and the solution for system 
(\ref{system-1}) decays to 0 in Fig.1(c) and Fig.1(d). The Theorem 
\ref{threshold}~(1) implies that the disease will spread when $h(0)-g(0)\geq 
2L^*$.  For given initial functions $(U_0,V_0)$ in (\ref{61}) and 
(\ref{62}), according to  our 
simulations, we 
can deduce that the initial infected doamin threshold  $L^*\in (0.6,1.0)$.

Moreover, it indicates that the initial infected domain is larger, the 
eventual infected density of populations is bigger from Fig.1(a) and Fig.1(b), 
which conforms to the disease propagation mechanism. 
\begin{figure}[htbp]
	\centering
	\setlength{\abovecaptionskip}{0.cm}
	\subfigure[$h_0=2.0$.]{
		\begin{minipage}[t]{0.4\linewidth}
			\centering
			\includegraphics[width=2.2in]{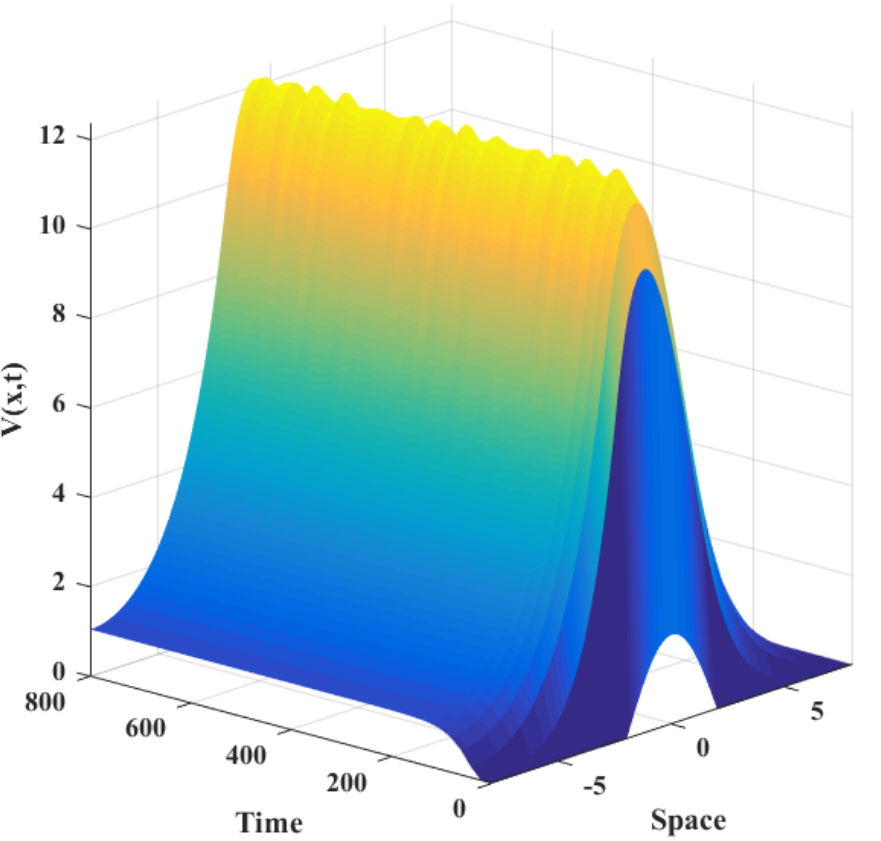}
			
		\end{minipage}%
	}\label{a}
	\subfigure[$h_0=1.0$.]{
		\begin{minipage}[t]{0.4\linewidth}
			\centering
			\includegraphics[width=2.2in]{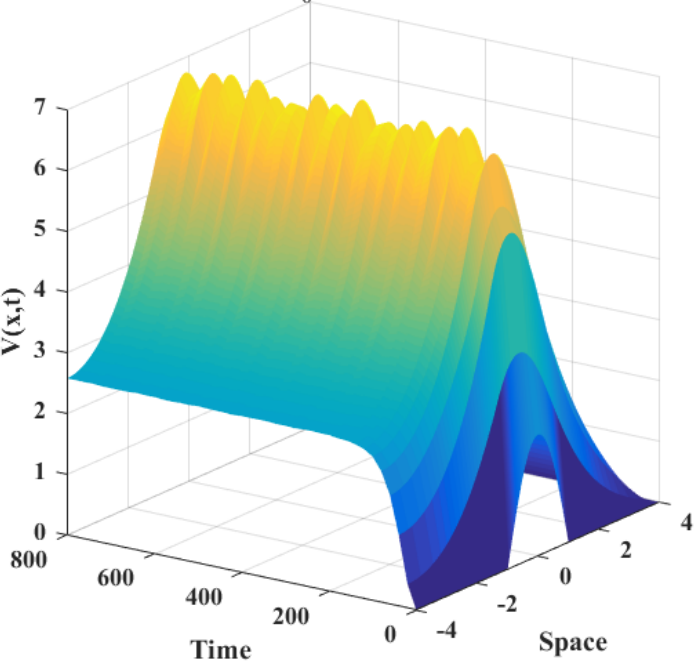}
		\end{minipage}%
	}\label{b}

	\quad
	\subfigure[$h_0=0.6$.]{
		\begin{minipage}[t]{0.4\linewidth}
			\centering
			\includegraphics[width=2.2in]{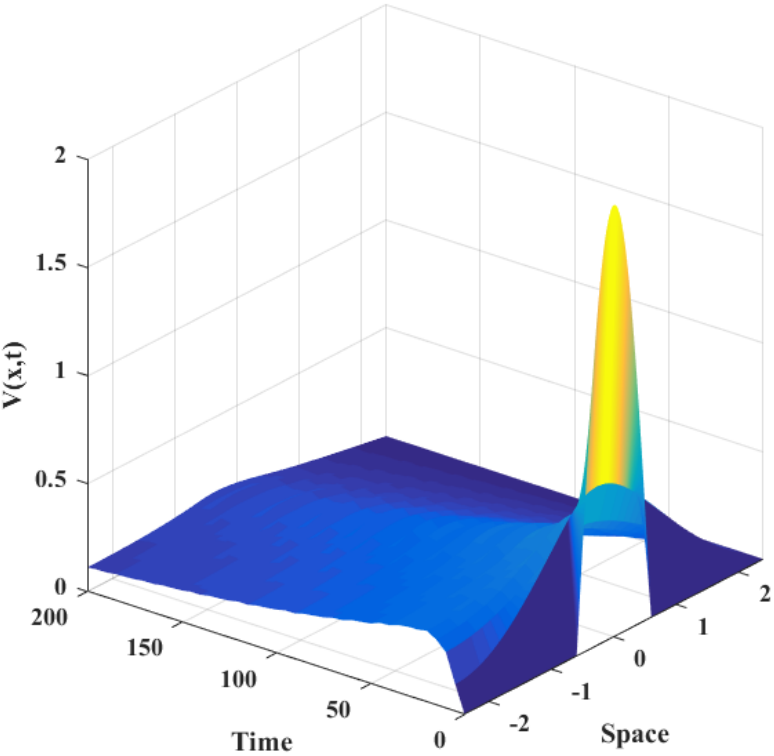}
		\end{minipage}%
	}%
	\subfigure[$h_0=0.5$.]{
		\begin{minipage}[t]{0.4\linewidth}
			\centering
			\includegraphics[width=2.2in]{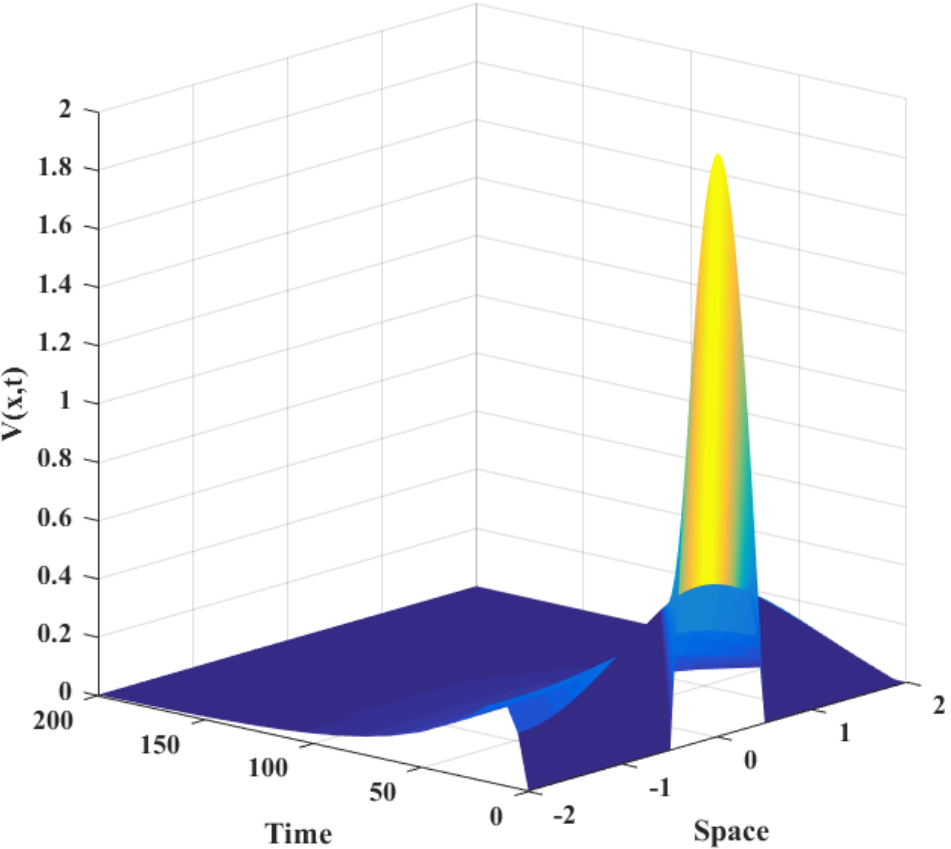}
		\end{minipage}%
	}%
	\centering
	\caption{Fix $\mu=0.1$, the Fig.1(a) and Fig.1(b) show that the 
		WNv is 
		spreading and the solution converges to a positive solution when 
		$h_0=2.0,1.0$, respectively, while the 
		Fig.1(c) and Fig.1(d) show that the WNv is 
		vanishing and the solution decays to 0 when $h_0=0.6,0.5$, 
		respectively. }
\end{figure}

\subsubsection{The effect of the infected domain boundary expanding rate.}
\noindent
\\Fix $h_0=0.6$, take $\mu=0.1,0.2$, respectively. As in shown in Fig.2, 
the WNv is spreading when $\mu=0.2$, while the WNv is vanishing when 
$\mu=0.1$. The Theorem \ref{threshold}(2) implies that the disease will spread 
when $\mu>\mu^*$ and the disease will vanish when $\mu\leq\mu^*$. For given  
initial functions 
$(U_0,V_0)$ and $h_0$, in view of 
our simulations,  we can deduce that the expanding capacity
rate threshold $\mu^*\in (0.1,0.2)$. According to this result, people can 
implement effective treatments to control the boundary expanding rate to 
restrain the propagation of the epidemic disease.

\begin{figure}[htbp]
	\centering
	\setlength{\abovecaptionskip}{0.cm}
	\subfigure[$\mu=0.2$.]{
		\begin{minipage}[t]{0.4\linewidth}
			\centering
			\includegraphics[width=2.2in]{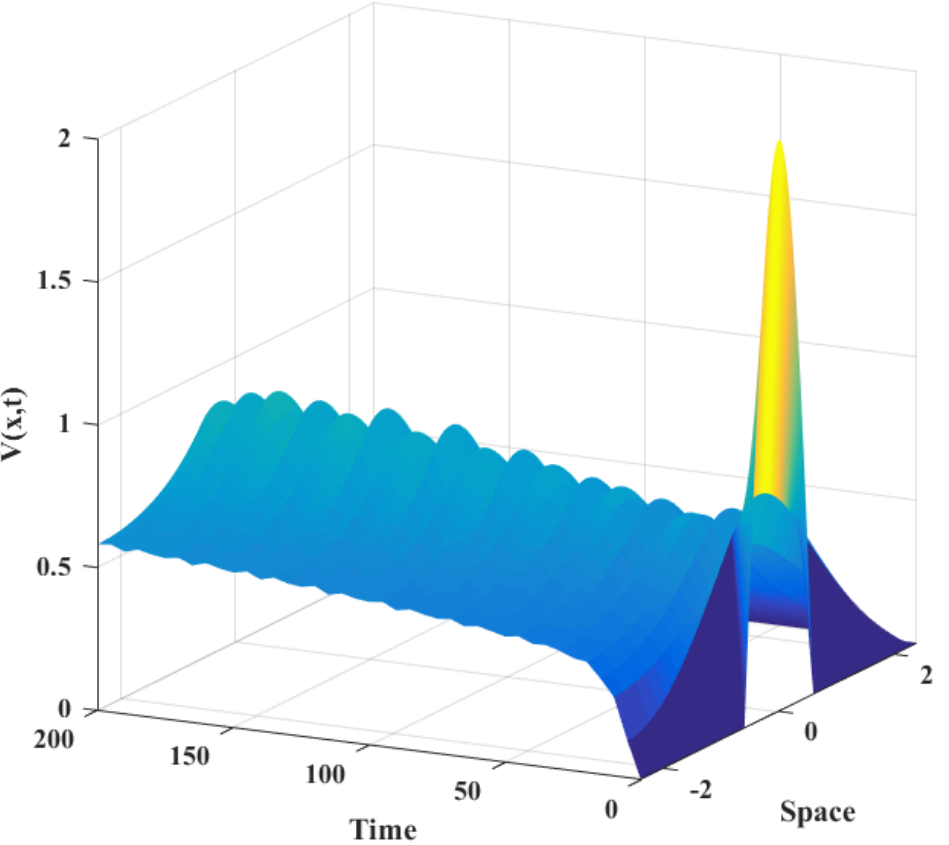}
		\end{minipage}%
	}%
	\subfigure[$\mu=0.1$.]{
		\begin{minipage}[t]{0.4\linewidth}
			\centering
			\includegraphics[width=2.2in]{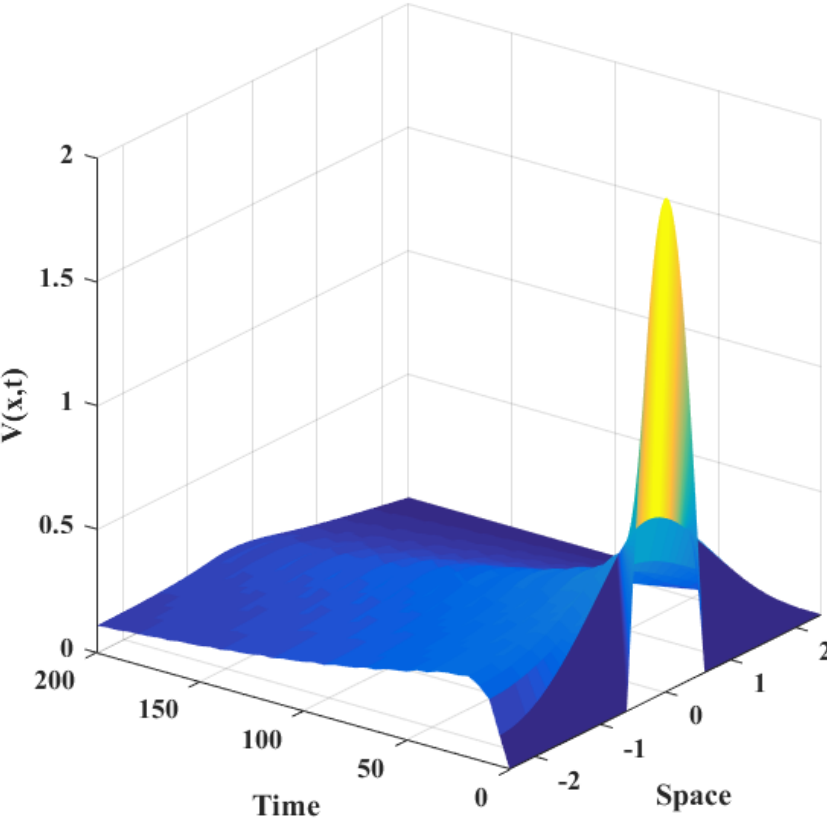}
		\end{minipage}%
	}%
	\centering
	\caption{Fix $h_0=0.6$,  the Fig.2(a) 
		shows that the WNv is spreading and the solution converges to a 
		positive 
		heterogeneous steady state when $\mu=0.2$, while 
		the Fig.2(b) shows that the WNv is vanishing and the solution decays to 
		0 when $\mu=0.1$. }
\end{figure}

\subsection{Discussions}	
\noindent
\\Neuroinvasive disease caused by West Nile virus is one of the most serious 
epidemic diseases and has brought considerable deaths since it occured. In 
order to 
supply feasible measures to predict and control the spreading of the epidemic 
disease, it is urgent to investigate the propagation mechanisms for WNv.
In order to describe
the transmission of WNv more reasonably, almost periodic mathematical 
biology models should be importantly considered. In this paper, we 
mainly propose a new reaction-diffusion WNv 
model (\ref{system-1}) with  moving infected domains $(g(t),h(t))$ in 
the spatial 
heterogeneous and time almost 
periodic environment and explore the long-time asymptotic dynamical behaviors  
of the solution for this model. 

Firstly, considering the spatial heterogeneity and time almost periodicity, we  
prove the global existence, uniqueness and get the regularity estimates of 
solution for system 
(\ref{system-1}), 
which is not trivial to obtain. 
Next, we define the principal Lyapunov exponent $\lambda(A,L)$ and $\lambda(t)$ 
with respect to 
time $t$ and 
get some analytic properties of it. Moreover,
we give the initial infected 
domain critical size $L^*$ using the 
principal Lyapunov exponent.  In this paper, under the assumption of
$\lambda(A,L)>0$ for 
$L\geq L^*$,  we obtain the following results: if $\lambda(t_0)>0$ for some 
$t_0\geq 0$, that 
is $h(t_0)-g(t_0)\geq2L^*$, 
then 
$h_\infty-g_\infty=\infty$ and the disease will spread 
no matter how big the diffusion 
rates and the initial data are; if $h(0)-g(0)<2L^*,$ 
there exists a threshold value $\mu^*\geq 0$ which represents the infected 
region expanding capacity. When $\mu>\mu^*,$ the disease will 
spread and the 
disease will vanish when $\mu\leq \mu^*$.
What is most important, assuming 
(\textit{\textbf{H1}})-(\textit{\textbf{H5}}), we obtain the 
long-time dynamical behaviors of WNv model by giving the 
spreading-vanishing dichotomy regimes of system (\ref{system-1}).  When the 
disease is vanishing, the 
densities $(U(x,t,g,h),V(x,t,g,h))$ of infected birds and mosquitoes will 
asymptotically converge to 0 uniformly for $x\in 
[g_\infty,h_\infty]$ and the eventually infected domain is no more than $2L^*$. 
When the 
disease is spreading, the 
densities $(U(x,t,g,h),V(x,t,g,h))$ of infected birds and mosquitoes will 
converge to a positive almost periodic solution 
$(U^*(x,t),V^*(x,t))$ of  system (\ref{system-10}) uniformly 
for $x$ in any compact 
subsets of $\mathbb R$. The asymptotic behavior of the solution when spreading 
occurs is largely 
different from the other homogeneous WNv models.
This result is advantageous to study the  cyclic outbreak laws of the West Nile 
virus 
caused by environmental differences and seasonal changes.

In view of the biological reality, our WNv model (\ref{system-1}) is first 
proposed  incorporate the spatial heterogeneity with time almost periodicity, 
which is more reasonable.
Meanwhile, we discuss the explicit dynamical behaviors by mathematical 
techniques,
which can be used to investigate other mosquito-borne epidemic models.
Moreover, our techniques in studying almost periodic 
systems different from other homogeneous and periodic systems 
can be applied in other almost periodic equations. Our methods using principal 
Lyapunov exponent can also applied to investigate other epidemic models.


\end{document}